\newcommand{\bC}{\mathbb{C}}
\newcommand{\bE}{\mathbb{E}}
\newcommand{\bF}{\mathbb{F}}
\newcommand{\bP}{\mathbb{P}}
\newcommand{\bQ}{\mathbb{Q}}
\newcommand{\bZ}{\mathbb{Z}}
\newcommand{\bV}{\mathbb{V}}
\newcommand{\cA}{\mathcal{A}}
\newcommand{\cC}{\mathcal{C}}
\newcommand{\cD}{\mathcal{D}}
\newcommand{\cF}{\mathcal{F}}
\newcommand{\cM}{\mathcal{M}}
\newcommand{\cN}{\mathcal{N}}
\newcommand{\cO}{\mathcal{O}}
\newcommand{\cP}{\mathcal{P}}
\newcommand{\cS}{\mathcal{S}}
\newcommand{\cT}{\mathcal{T}}
\newcommand{\cU}{\mathcal{U}}
\newcommand{\cX}{\mathcal{X}}
\newcommand{\cY}{\mathcal{Y}}
\newcommand{\cZ}{\mathcal{Z}}
\newcommand{\fr}{\mathfrak{r}}
\newcommand{\fC}{\mathfrak{C}}
\newcommand{\fD}{\mathfrak{D}}
\newcommand{\fM}{\mathfrak{M}}
\newcommand{\fT}{\mathfrak{T}}
\newcommand{\fX}{\mathfrak{X}}
\newcommand{\fY}{\mathfrak{Y}}
\newcommand{\orb}{\mathrm{orb}}
\newcommand{{\inv} }{\mathrm{inv}}
\newcommand{\ev}{\mathrm{ev}}
\newcommand{\Aut}{\mathrm{Aut}}
\newcommand{\pt}{\mathrm{pt}}
\newcommand{\rk}{\operatorname{rk}}
\newcommand{\vir}{{\mathrm{vir}}}
\newcommand{\Spec}{\operatorname{Spec}}
\newcommand{\Hilb}{\operatorname{Hilb}}
\newcommand{\reg}{\mathrm{reg}}
\newcommand{\DT}{\mathrm{DT}}
\newcommand{\GW}{\mathrm{GW}}
\newcommand{\Sym}{\operatorname{Sym}}
\newcommand{\QH}{\operatorname{QH}}
\newcommand{\DR}{\operatorname{DR}}
\newcommand{\pr}{\operatorname{pr}}
\newcommand{\vdim}{\operatorname{vdim}}
\newcommand{\irreg}{\mathrm{irreg}}
\newcommand{\rel}{\mathrm{rel}}
\newcommand{\sh}{\mathrm{sh}}
\newcommand{\one}{\mathbf{1}}
\newcommand{\tC}{{\widetilde{C}}}
\newcommand{\tP}{ {\widetilde{P}} }
\newcommand{\tpi}{ {\tilde{\pi}} }
\newcommand{\Mbar}{\overline{\cM}}
\newcommand{\Nbar}{\overline{\cN}}
\newcommand{\mubar}{\overline{\mu}}
\newcommand{\nubar}{\overline{\nu}}
\newcommand{\rhobar}{\overline{\rho}}
\newcommand{\psibar}{\overline{\psi}}
\newcommand{\etabar}{\overline{\eta}}
\newcommand{\lambdabar}{\overline{\lambda}}
\newtheorem{dummy}{dummy}[section]
\newtheorem{lemma}[dummy]{Lemma}
\newtheorem{theorem}[dummy]{Theorem}
\newtheorem{conjecture}[dummy]{Conjecture}
\newtheorem{corollary*}[dummy]{Corollary*}
\newtheorem{proposition}[dummy]{Proposition}
\theoremstyle{definition}
\newtheorem{definition}[dummy]{Definition}
\newtheorem{remark}[dummy]{Remark}
\numberwithin{equation}{section}
\begin{document}

\title{Gromov--Witten theory of $[\bC^2/\bZ_{n+1}]\times \bP^1$}

\author{Zijun Zhou}
\address{Zijun Zhou, Kavli Institute for the Physics and Mathematics of the Universe (WPI), The University of Tokyo Institutes for Advanced Study, The University of Tokyo, Kashiwa, Chiba 277-8583, Japan}
\email{zijun.zhou@ipmu.jp}

\author{Zhengyu Zong}
\address{Zhengyu Zong, Yau Mathematical Sciences Center, Tsinghua University, Jin Chun Yuan West Building,
Tsinghua University, Haidian District, Beijing 100084, China}
\email{zyzong@mail.tsinghua.edu.cn}

\maketitle

\begin{abstract}

We compute the relative orbifold Gromov--Witten invariants of $[\bC^2/\bZ_{n+1}]\times \bP^1$, with respect to vertical fibers. Via a vanishing property of the Hurwitz--Hodge bundle, 2-point rubber invariants are calculated explicitly using Pixton's formula for the double ramification cycle, and the orbifold quantum Riemann--Roch. As a result parallel to its crepant resolution counterpart for $\cA_n$, the GW/DT/Hilb/Sym correspondence is established for $[\bC^2/\bZ_{n+1}]$. The computation also implies the crepant resolution conjecture for relative orbifold Gromov--Witten theory of $[\bC^2/\bZ_{n+1}]\times \bP^1$.

\end{abstract}

\tableofcontents

\section{Introduction}

\subsection{Overview}

Upon its emergence, the GW/DT correspondence has aroused plenty of interest in mathematical physics. The story begins with the technique of topological vertex invented in \cite{AKMV} to compute Gromov--Witten invariants for toric Calabi--Yau 3-folds, where generating functions for GW invariants are expressed as a summation over partitions. As observed by \cite{Oko-Res-Vaf, MNOP1, MNOP2}, this combinatorial feature can be interpreted in terms of another enumerative theory --- the Donaldson--Thomas theory. Lots of work has been done after this discovery, including the GW/DT correspondence for local curves \cite{Bry-Pan, Oko-Pan} and its generalization to $\cA_n\times \bP^1$ \cite{Mau, Mau-Ob2}. Here $\cA_n$ is defined as the minimal resolution of the singular quotient $\bC^2/\bZ_{n+1}$, where the cyclic group
$$\bZ_{n+1}:= \bZ/(n+1)\bZ = \{\zeta\in \bC | \zeta^{n+1}=1 \}$$
acts on $\bC^2$ in the anti-diagonal manner:
$$\zeta\cdot (x,y):= (\zeta x, \zeta^{-1} y).$$

The resolution $\cA_n \rightarrow \bC^2/\bZ_{n+1}$ is a crepant resolution, meaning that it preserves the canonical class. On the other hand, there is an obvious resolution of the same singularity in the category of orbifolds, the stacky quotient $[\bC^2/\bZ_{n+1}]$. In the spirit of the crepant resolution conjecture \cite{Rua, Bry-Gra}, one expects a GW/DT correspondence for $[\bC^2/\bZ_{n+1}]\times \bP^1$, which should be closely related to that for $\cA_n\times \bP^1$.

\subsection{Summary of results}

Let $\cX:= [\bC^2/\bZ_{n+1}]\times \bP^1$ be our target, and $\cD = \coprod_{i=1}^r [\bC^2/\bZ_{n+1}]\times \{z_i\}$ be a disjoint union of vertical fibers, where $z_1,\cdots,z_r$ are distinct points on $\bP^1$. A relative stable map from an orbifold nodal curve $C$ to $\cX$, relative to $\cD$, is a map from $C$ to a modified target $\cX[k]$, for some $k$. $\cX[k]$ is defined by gluing $\cX$ along $\cD$ with $k$ copies of ``bubbles", constructed by the projective completion of the normal bundle of $\cD$ in $\cX$. The map $C\rightarrow \cX[k]$ is required to be stable, and satisfy certain transversality conditions. For the precise definition and detailed discussions on orbifold relative GW theory, we refer the readers to \cite{Abr-Fan}.

Let $m> 0$ be a fixed integer. Consider the moduli space of such relative stable maps,
$$\Mbar_{g, \gamma} (\cX, \mubar^1, \cdots, \mubar^r),$$
where $g$ is the genus of domains, $\gamma = (\gamma_1, \cdots, \gamma_p)$ is a tuple of elements in $\bZ_{n+1}$ indicating the monodromies of non-relative marked points, and $\mubar^1, \cdots, \mubar^r$ are $\bZ_{n+1}$-weighted partitions of $m$. The partition $\mubar^i$ records the ramification profile of the stable map with the $i$-th divisor, where the decoration of each part remembers the monodromy of the corresponding relative marked point.

Let $T$ be the 2-dimensional torus acting on the fiber. Note that $\cX$, and hence the moduli space, are noncompact, but admit a $T$-action with compact fixed loci. The moduli space of relative stable maps is equipped with a $T$-equivariant perfect obstruction theory. Hence by $T$-localization one can define the GW invariants, written as correlation functions
$$\langle \mubar^1, \cdots, \mubar^r \rangle^{\cX, \circ}_{g,\gamma}.$$
Here the circle $\circ$ denotes the \emph{connected} theory which means that the domain curve is required to be connected. The above GW invariants can be packaged into a generating function
$$Z'_\GW(\cX)^{\circ}_{\mubar^1, \cdots, \mubar^r}:= \sum_{g\geq 0} \sum_\gamma  z^{2g-2} \frac{x^\gamma}{\gamma!} \langle \mubar^1, \cdots, \mubar^r \rangle^{\cX, \circ}_{g, \gamma}.$$
Similarly, one can also consider the \textit{disconnected} GW invariants,
$$\langle \mubar^1, \cdots, \mubar^r \rangle^{\cX, \bullet}_{\chi,\gamma}.$$
Here the dot $\bullet$ denotes the \emph{disconnected} theory which means that the domain curve can be disconnected and $\chi$ is Euler characteristic of the domain curve. Here we \emph{do not allow contracted connected components}. One can similarly define the generating function
$$Z'_\GW(\cX)_{\mubar^1, \cdots, \mubar^r}:= \sum_{\chi,\gamma}   z^{-\chi} \frac{x^\gamma}{\gamma!} \langle \mubar^1, \cdots, \mubar^r \rangle^{\cX, \bullet}_{\chi, \gamma}.$$

Another equivalent way to think about this is to identify the moduli with that of relative stable maps to $\cY:= B\bZ_{n+1}\times \bP^1$, and consider the GW theory twisted by the obstruction bundle associated with the normal bundle of $\cY\subset \cX$.

By the orbifold GW degeneration formula \cite{Abr-Fan}, the computation of $r$-point functions reduces to that of $3$-point functions. If we assume the generation conjecture (see Section 6), one can further reduce it to the case when one of the three partitions is of the form $(1,0)^m, (2,0)(1,0)^{m-2},$ or $(1,k)(1,0)^{m-1}$. In this case, one can reduce the $3$-point functions to $2$-point \emph{rubber} invariants by a rigidification argument. It turns out that the obstruction bundle is only nontrivial on certain simple strata of the moduli, and an application of Pixton's formula for the double ramification cycle leads to an explicit formula for the rubber invariants. We are able to compare the result with Maulik \cite{Mau} after a change of variables.

The first main result of this paper is about the crepant resolution conjecture for the GW theory of $\cX$. More explicitly, one can define a certain generating function $Z'_\GW(\cA_n\times \bP^1)_{\vec\mu^1, \cdots, \vec\mu^r}$ of GW invariants of $\cA_n\times \bP^1$. The definition of $Z'_\GW(\cA_n\times \bP^1)_{\vec\mu^1, \cdots, \vec\mu^r}$ is similar to that of $Z'_\GW(\cX)_{\mubar^1, \cdots, \mubar^r}$. The main difference is that there are certain variables $s_j, 1\leq j\leq n$ in $Z'_\GW(\cA_n\times \bP^1)_{\vec\mu^1, \cdots, \vec\mu^r}$ encoding the degree of the map from the domain curve to the $\cA_n$ component. The crepant resolution conjecture states that $Z'_\GW(\cA_n\times \bP^1)_{\vec\mu^1, \cdots, \vec\mu^r}$ and $Z'_\GW(\cX)_{\mubar^1, \cdots, \mubar^r}$ are equal under certain change of variables between $s_1,\cdots,s_n$ and $x_1,\cdots,x_n$. Theorem \ref{Thm-GW-CRC} below is about the crepant resolution conjecture of the three fundamental cases. From the discussion above, we know that Theorem \ref{Thm-GW-CRC} implies the general crepant resolution conjecture if we assume the generation conjecture (see Section 6).

Let $S$ be an smooth orbifold surface. Denote by $\cF_S$ the vector space spanned by $H^*_\orb(S)$-weighted partitions of $m$, which we call the Fock space.

\begin{theorem}[GW crepant resolution, Theorem \ref{GW-CRC}] \label{Thm-GW-CRC}
	Given $\mubar, \nubar, \rhobar\in \cF_{[\bC^2/\bZ_{n+1}]}$, with
	$$\rhobar = (1,0)^m, \qquad (2,0)(1,0)^{m-2}, \qquad \textrm{or} \qquad (1,k)(1,0)^{m-1},$$
	let $\vec\mu, \vec\nu, \vec\rho\in \cF_{\cA_n}$ be their correspondents. Then under the change of variables
	$$s_j = \zeta \exp \left( \frac{1}{n+1} \sum_{a=1}^n (\zeta^{a/2}- \zeta^{-a/2}) \zeta^{ja} x_a \right), \qquad 1\leq j\leq n,$$
	we have:
	\begin{enumerate}[1)]
		\item When $l(\mubar)+ l(\nubar)=2$, and $\rhobar = (1,0)^m$ or $(2,0)(1,0)^{m-2}$,
		$$Z'_\GW(\cA_n\times \bP^1)_{\vec\mu, \vec\nu, \vec\rho} = Z'_\GW([\bC^2/\bZ_{n+1}]\times \bP^1)_{\mubar, \nubar, \rhobar};$$
		\item When $l(\mubar)+ l(\nubar)\geq 3$, or $\rhobar =(1,k)(1,0)^{m-1}$,
		$$Z'_{\GW,\beta\neq 0}(\cA_n\times \bP^1)_{\vec\mu, \vec\nu, \vec\rho} = Z'_\GW([\bC^2/\bZ_{n+1}]\times \bP^1)_{\mubar, \nubar, \rhobar}.$$
	\end{enumerate}
\end{theorem}

Here $\vec\mu$, $\vec\nu$ and $\mubar$, $\nubar$ are identified via the explicit isomorphism
$$\Phi: H^*_\orb([\bC^2/\bZ_{n+1}]) \cong H^*(\cA_n),$$
\begin{equation} \label{correspondence}
e_0 \mapsto 1, \qquad e_i \mapsto \frac{\zeta^{i/2} - \zeta^{-i/2}}{n+1} \sum_{j=1}^n \zeta^{ij} \omega_j, \qquad 1\leq i\leq n,
\end{equation}
where $\omega_1, \cdots, \omega_n \in H^2(\cA_n, \bQ)$ is the dual basis to the exceptional curves in $\cA_n$. The discrepancy between Case 1) and 2) in the theorem is due to the fact that $\Phi$ only preserves the Poincar\'e pairing, but not the ring structures of the cohomology.

As a byproduct, we observe that the moduli space of genus-0 stable maps to $\Sym^m([\bC^2/\bZ_{n+1}])$ shares a common open substack with the moduli of relative stable maps to $\cX$. Moreover, the obstruction bundles coincide and vanish outside of this open substack. Thus our computation also leads to a formula for the orbifold quantum cohomology of the symmetric product $\Sym^m([\bC^2/\bZ_{n+1}])$.

\begin{theorem}[GW/Sym correspondence, Theorem \ref{GW-Sym}]
  Given $\mubar, \nubar, \rhobar\in \cF_{[\bC^2/\bZ_{n+1}]}$, with
  $$\rhobar = (1,0)^m, \qquad (2,0)(1,0)^{m-2}, \qquad \textrm{or} \qquad (1,k)(1,0)^{m-1},k\neq 0,$$
  we have
  $$z^{l(\mu) + l(\nu) + l(\rho) - m} Z'_\GW([\bC^2/\bZ_{n+1}]\times \bP^1)_{\mubar, \nubar, \rhobar} =  \langle \mubar, \nubar, \rhobar \rangle_{\Sym^m ([\bC^2/\bZ_{n+1}])},$$
  where the right hand side is the 3-point genus-zero orbifold GW invariants of $\Sym^m ([\bC^2/\bZ_{n+1}])$.
\end{theorem}

In \cite{Zhou2}, the first named author proved the crepant resolution conjecture for relative DT invariants of $\cX$, via a further DT/Hilb correspondence to the quantum cohomology of $\Hilb^m([\bC^2/\bZ_{n+1}])$. Combining Theorem \ref{Thm-GW-CRC} and these results with the GW/DT correspondence for $\cA_n\times \bP^1$, we obtain the following.

\begin{theorem}[GW/DT correspondence, Theorem \ref{GW-DT}]
	Given $\mubar, \nubar, \rhobar\in \cF_{[\bC^2/\bZ_{n+1}]}$, with
	$$
	\rhobar = (1,0)^m, \qquad (2,0)(1,0)^{m-2}, \qquad \textrm{or} \qquad (1,k)(1,0)^{m-1},
	$$
	let $\vec\mu$, $\vec\nu$, $\vec\rho$ be their correspondents in $\cF_{\cA_n}$. Then
	under the change of variables
	$$
	Q =q_0 q_1 \cdots q_n = -e^{iz}, \qquad q_j = \zeta \exp \left( \frac{1}{n+1} \sum_{a=1}^n (\zeta^{a/2}- \zeta^{-a/2}) \zeta^{ja} x_a \right), \qquad 1\leq j\leq n,
	$$
	we have
	\begin{enumerate}[1)]
		\item when $l(\mubar)+ l(\nubar)=2$,  and $\rhobar = (1,0)^m$ or $(2,0)(1,0)^{m-2}$,
		$$
		(-iz)^{l(\mu)+l(\nu)+l(\rho)-m} Z'_\GW([\bC^2/\bZ_{n+1}]\times \bP^1)_{\mubar, \nubar, \rhobar} = (-1)^m Z'_\DT([\bC^2/\bZ_{n+1}]\times\bP^1)_{\vec\mu, \vec\nu, \vec\rho};
		$$
		\item when $l(\mubar)+ l(\nubar)\geq 3$, or $\rhobar =(1,k)(1,0)^{m-1}$,
		$$
		(-iz)^{l(\mu)+l(\nu)+l(\rho)-m} Z'_\GW([\bC^2/\bZ_{n+1}]\times \bP^1)_{\mubar, \nubar, \rhobar} = (-1)^m Z'_{\DT, \varepsilon-irreg} ([\bC^2/\bZ_{n+1}]\times\bP^1)_{\vec\mu, \vec\nu, \vec\rho}.
		$$
	\end{enumerate}
\end{theorem}

In conclusion, we obtain a GW/DT/Hilb/Sym correspondence on the $[\bC^2/\bZ_{n+1}]$ level, which can be viewed as a crepant resolution/transformation correspondent to its parallel picture on the $\cA_n$ level. The relationship among these theories can be summarized in the following diagram.

\begin{figure}[h]
\begin{center}
\psfrag{A}{$\QH(\Sym(\cA_n))$}
\psfrag{B}{$\GW(\cA_n\times \bP^1)$}
\psfrag{C}{$\DT(\cA_n\times \bP^1)$}
\psfrag{D}{$\QH(\Hilb(\cA_n))$}
\psfrag{E}{$\QH(\Sym([\bC^2/\bZ_{n+1}]))$}\psfrag{F}{$\GW([\bC^2/\bZ_{n+1}]\times \bP^1)$}
\psfrag{G}{$\DT([\bC^2/\bZ_{n+1}]\times \bP^1)$}
\psfrag{H}{$\QH(\Hilb([\bC^2/\bZ_{n+1}]))$}
\hspace*{-2.8cm}\includegraphics[scale=0.5]{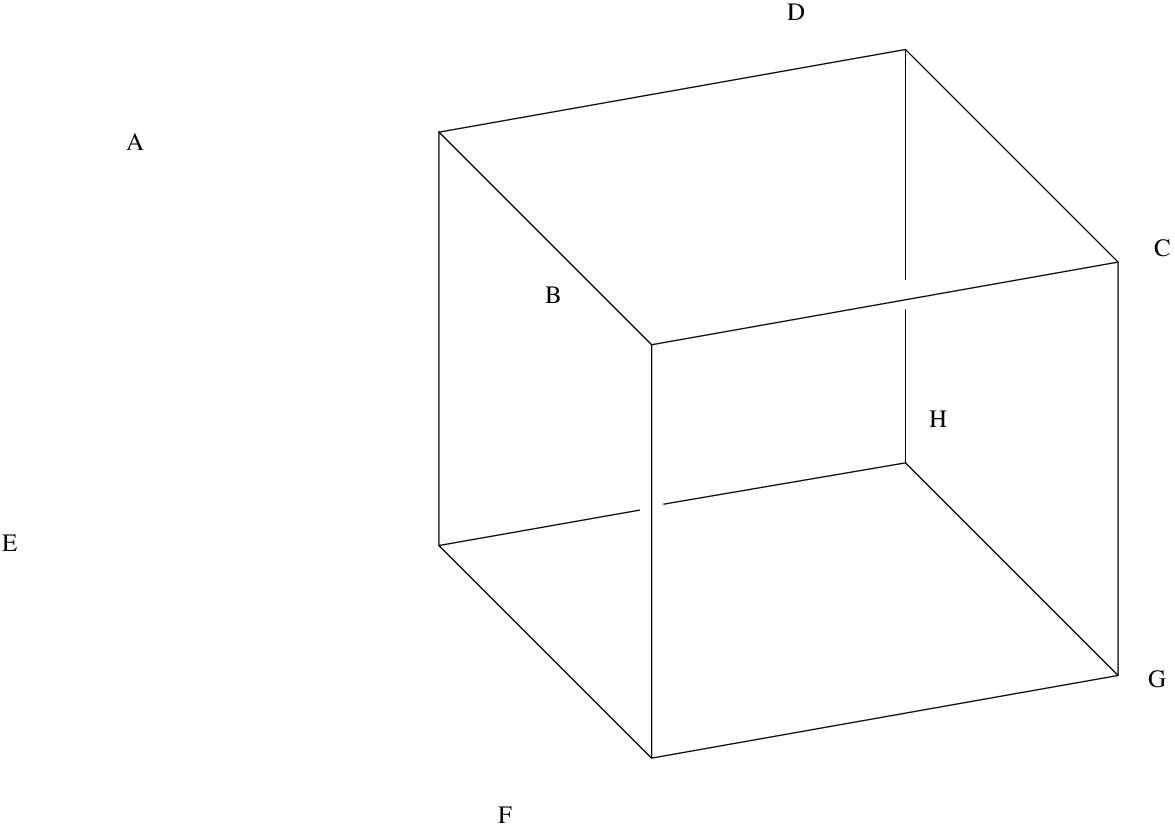}
\end{center}
%\caption{}
\end{figure}

%$$\xymatrix{
%& \QH(\Sym(\cA_n)) \ar@{-}[dl] \ar@{-}[rr] \ar@{-}'[d][dd] & & \QH(\Hilb(\cA_n)) \ar@{-}[dl] \ar@{-}[dd] \\
%\GW(\cA_n\times \bP^1) \ar@{-}[rr] \ar@{-}[dd] && \DT(\cA_n\times \bP^1) \ar@{-}[dd] & \\
%& \QH(\Sym([\bC^2/\bZ_{n+1}])) \ar@{-}[dl] \ar@{-}'[r][rr] && \QH(\Hilb([\bC^2/\bZ_{n+1}])) \ar@{-}[dl] \\
%\GW([\bC^2/\bZ_{n+1}] \times \bP^1) \ar@{-}[rr] && \DT([\bC^2/\bZ_{n+1}]\times \bP^1) &
%}$$

The paper is organized as follows. In Section 2 and 3, we explain in detail the definition of relative GW invariants, and how one can reduce the calculation of 3-point functions with one divisor insertion to 2-point rubber invariants. In Section 4, we prove the vanishing property of the obstruction bundle, and use Pixton's formula for the double ramification cycle to calculate the 2-point rubber invariants. Following the calculation of J. Zhou in \cite{Zhou}, we apply the change of variables and prove Theorem \ref{Thm-GW-CRC}. In Section 5, we discuss the orbifold quantum cohomology of symmetric products and obtain the GW/Sym correspondence. Finally, Section 6 is a summary of all existing results, where we prove the GW/DT correspondence for $[\bC^2/\bZ_{n+1}]\times \bP^1$.

\subsection{Acknowledgements}

The authors would like to thank Emily Clader, Felix Janda, and Dustin Ross for their wonderful talks at Columbia University on the double ramification cycle, which brought our attention to Pixton's formula. Moreover, the authors would like to express their acknowledgements to Professor Chiu-Chu Melissa Liu, for suggesting the whole project, and many useful conversations and discussions. The first author was supported by the National Science Foundation under Grant No. DMS-1440140 while the author was in residence at the Mathematical Sciences Research Institute in Berkeley, California, during the Spring 2018 semester; the first author is also supported by World Premier International Research Center Initiative (WPI), MEXT, Japan. The second author is supported by NSFC grant No. 11701315 and the start up grant of Tsinghua University.

\section{Geometry}

\subsection{Relative stable maps} \label{rel-moduli}

In this subsection, we briefly recall the definition of relative stable maps, mainly in the sense of Abramovich--Fantech \cite{Abr-Fan}. For the detailed definitions and constructions, we refer the reader to the original reference.

Let $X$ be a smooth DM stack, and $D\subset X$ be a smooth effective divisor. An \emph{expanded pair} of length $k$ is obtained by attaching to $X$ along $D$ a chain of $k$ ``bubble components":
$$
X[k]:= X \cup_D \Delta \cup_D \cdots \cup_D \Delta.
$$
Here $\Delta = \bP_D (\cO \oplus N_{D / X})$, and the gluings are along $D\subset X$, and the $0$- and $\infty$-sections of $\Delta$, such that at each singular divisor, which is isomorphic to $D$, the normal bundles of $D$ in the two $\Delta$'s containing it are inverse to each other. We call the smooth divisor $D$ in the last $\Delta$ the distinguished divisor. Expanded pairs can form families, where they are allowed to degenerate and produce new bubbles in a nice way. Dating back to J. Li \cite{Li, GV}, there is a smooth Artin stack $\fT$, parameterizing all possible expanded pairs, with a universal family $\fY \to \fT$, and the universal distinguished divisor $\fD \subset \fY$.

The idea is further developed in \cite{ACFW, Abr-Fan}, where twisting structures are introduced to treat the orbifold case. For the expanded pair $X[k]$, let $\fr_0, \cdots, \fr_k$ be a tuple of non-negative integers. The twisted expanded pair is defined as
$$
X[k] (\fr_1, \cdots, \fr_k) := X ( \sqrt[\fr_1]{D} ) \cup \Delta_1 \cup \cdots \cup \Delta_k,
$$
where $\Delta_i := \Delta (\sqrt[\fr_i]{D_0}, \sqrt[\fr_{i+1}]{D_\infty} )$, and the gluing is along the root gerbes in a balanced manner. There is also a notion of families of twisted expanded pairs, given a \emph{twisting choice} $\fr$, which is a compatible choice of the twisting indices for singular and distinguished divisors in families. Similarly those form a smooth Artin stack $\fT^\fr$, together with a universal family $\fY^\fr$, parameterizing all $\fr$-twisted expanded pairs.

\begin{definition}
A \emph{relative stable map} \footnote{These are called predeformable maps in \cite{Abr-Fan}.} is a representable map $f$ from a nodal prestable orbifold curve $\cC$ to an expanded pair $X[k]$, for some $k\geq 0$, such that

(i) no irreducible components map to any singular divisor or the distinguished divisor;

(ii) only nodes map to singular divisors, and locally at a node of $\cC$, $f$ is of the form 
$$
\Spec \left( \bC[u, v] / (uv) \right)^\sh \to \Spec \left( \bC [x,y, z_1, \cdots, z_m] / (xy) \right)^\sh, \qquad (x,y) \mapsto (u^c, v^c), 
$$
for some positive integer $c$; 

(iii) $f$ admits finitely many automorphisms.

In particular, $f$ is called \emph{transversal} if the contact orders $c$ at all nodes and the distinguished divisor are $c=1$. Similarly, one can define the twisted version, with $X[k]$ replaced by $X[k] (\fr)$.
\end{definition}

Let $\Mbar_\Gamma (X, D)$ be the moduli space of all relative stable maps, with topological data $\Gamma$. Here $\Gamma$ denotes all topological information such as genus, degree, contact orders along the disinguish divisor, which makes the moduli space of finite type. Similarly, given a twisting choice $\fr$, we define $\Mbar_\Gamma^\fr (X, D)$ as the moduli space of all \emph{transversal} relative stable maps, with topological data $\Gamma$. By results in Section 3 of \cite{Abr-Fan}, both $\Mbar_\Gamma (X, D)$ and $\Mbar_\Gamma^\fr (X, D)$ are proper DM stacks. There is a proper map $\Mbar^\fr_\Gamma (X, D) \to \Mbar_\Gamma (X, D)$ forgetting all the twisting structures.

The crucial idea of the approach by Abramovich--Fantechi to the relative GW theory is that, compared to the predeformable condition that defines $\Mbar_\Gamma (X, D)$, which is a locally closed condition, the transversal condition defining $\Mbar^\fr_\Gamma (X, D)$ is an open condition, which makes its deformation-obstruction theory much easier to work with. Therefore, a perfect obstruction theory exists for $\Mbar_\Gamma^\fr (X, D)$, rendering a virtual cycle.

Moreover, the obstruction theory is independent of the twisting choice $\fr$, in the following sense. There is a partial order $\prec$ in the set of all twisting choices. Given $\fr, \fr'$ such that $\fr \prec \fr'$, there is a Cartesian diagram
$$
\xymatrix{
\Mbar_\Gamma^{\fr'} (X, D) \ar[r]^\phi \ar[d] & \Mbar_\Gamma^\fr (X, D) \ar[d] \\
\fT^{\fr'} \ar[r] & \fT^\fr ,
}
$$
with $\phi$ flat and proper \footnote{So that the pull-back $\phi^*$ and push-forward $\phi_*$ are well-defined for cycles. }, such that the obstruction theory pulls back along $\phi$, and hence $\phi^* [\Mbar_\Gamma^{\fr} (X, D)]^\vir = [\Mbar_\Gamma^{\fr'} (X, D)]^\vir$. The fact that the map $\fT^{\fr'} \to \fT^\fr$ is generically of degree one implies that $\phi_* [\Mbar_\Gamma^{\fr'} (X, D)]^\vir = [\Mbar_\Gamma^{\fr} (X, D)]^\vir$, due to Costello's pushforward formula \cite{Cos}. Given furthermore the compatibility of the maps with the partial ordering $\prec$, we can define the virtual class on $\Mbar_\Gamma (X, D)$ in the following way.

\begin{definition} \label{vc}
Take a sufficiently large twisting choice $\fr$ (which always exists), with the map $\pi: \Mbar_\Gamma^\fr (X, D) \to \Mbar_\Gamma (X, D)$. Define $[\Mbar_\Gamma (X, D)]^\vir := \pi_* [\Mbar_\Gamma^\fr (X, D)]^\vir$.
\end{definition}

When $X$ is a scheme, by the comparison result (Theorem 1.1 in \cite{AMW}), this definition agrees with the one defined by J. Li \cite{Li2}.

\subsection{Geometry of $[\bC^{2}/\bZ_{n+1}]\times \bP^1$}
Fix an integer $n\geq 0$. Let $\bP^1$ be the projective line and $\cO_{\bP^1}$ be the trivial line bundle on it. Let $\cY$ be the trivial $\bZ_{n+1}$-gerbe over $\bP^1$, coming from the root construction \cite{Abr-Gra-Vis, Cad} of order $n+1$ on $\cO_{\bP^1}$. In other words, $\cY$ is defined by the following Cartesian diagram.
\[\begin{CD}
\cY   	@>>>          B\bC^*       \\
@VVV                             @VVV\lambda\mapsto\lambda^{n+1}\\
\bP^1                  @>\cO_{\bP^1}>>      B\bC^*.
\end{CD}\]

There is an orbifold line bundle $L$ on $\cY$ associated with the top map in the above diagram. The degree of $L$ is zero, but there is a nontrivial action by $\bZ_{n+1}$ on the fibers of $L$, for which the generator acts by multiplication with $\zeta:=e^{\frac{2\pi\sqrt{-1}}{n+1}}$. We will be interested in the relative GW theory of the total space $L\oplus L^{-1}\to\cY$, which is isomorphic to
$$\cX:=[\bC^{2}/\bZ_{n+1}]\times \bP^1,$$
where the generator $\zeta\in \bZ_{n+1}$ acts on $(x,y)\in \bC^{2}$ by $(\zeta x,\zeta^{-1} y)$.

Let $T=(\bC^*)^2$ be the 2-dimensional algebraic torus. Consider the standard action of $T$ on $\bC^2$, with $T$-characters $(n+1)t_1$, $(n+1)t_2$, which induces a $T$-action on $[\bC^{2}/\bZ_{n+1}]$ with characters $t_1$, $t_2$. Identifying $\cX= [\bC^{2}/\bZ_{n+1}]\times \bP^1$ with the total space of $L\oplus L^{-1}\to\cY$, we have a $T$-action on the fibers of $L\oplus L^{-1}\to\cY$ via characters $t_1$, $t_2$.

\subsection{Relative stable maps to $\cY = B \bZ_{n+1} \times \bP^1$} \label{moduli}

Let $z_1, \cdots, z_r$ be $r$ points on $\cY$. We apply the construction in Subsection \ref{rel-moduli} to the case $X = \cY$, and $D = \{z_1, \cdots, z_r\}$. The divisor $D$ here is disconnected, but the theory can be easily generalized to this case. For example, an expanded pair would be the gluing of $\cY$ with $r$ chains (of different lengths) of $B\bZ_{n+1} \times \bP^1$'s, along the points $z_1, \cdots, z_r$.

We will consider relative stable maps to $\cY$, with marked points on the domain, which can be \emph{non-relative}, which do not map into any singular or distinguished divisors of the expanded target, or \emph{relative}, which map into the distinguished divisors. The topological datum will be described by the following notion.

\begin{definition}
Let $m$ be a positive integer. A \emph{$\bZ_{n+1}$-weighted partition} of $m$
$$
\mubar=\{(\mu_1,k_1),\cdots , (\mu_{l(\mu)}, k_{l(\mu)})\}
$$
means the following: $\mu:=\{\mu_1,\cdots, \mu_{l(\mu)}\}$ is an ordinary partition of $m$, and each part is decorated with an element $k_i\in\bZ_{n+1}$, $i=1,\cdots,l(\mu)$.

Define the subset $A'(\bar\mu)$ and $A''(\bar\mu)$, such that
$$A'(\mubar)\sqcup A''(\mubar) = \{1,\cdots, l(\mubar)\},$$
where $k_i=0$ if and only if $i\in A'(\mubar)$. Denote $l'(\mubar)=|A'(\mubar)|$ and $l''(\mubar)=|A''(\mubar)|$.

For any $\mubar$, we use the notation $-\mubar$ to denote $\{(\mu_1,-k_1),\cdots , (\mu_{l(\mu)}, -k_{l(\mu)})\}$.
\end{definition}

Let $f$ be a relative stable map of degree $m$. For each distinguished divisor $z_i$, the contact order of $f$ along the relative marked points can be recorded by a partition of $m$, which together with the monodromy data, form a $\bZ_{n+1}$-weighted partition $\mubar^i$ of $m$. The contact order of the $j$-th relative marked point at the $z_i$ is recorded as $\mubar^i_j$, whose monodromy recorded by the decoration. We call this $\mubar^i$ the ramification profile of $f$ at $z_i$.

Suppose there are $p$ non-relative marked points, whose monodromies are denoted by $\gamma_1, \cdots, \gamma_p \in \bZ_{n+1}$. We only consider the case where all $\gamma_i$'s are nontrivial, i.e. $\gamma_i \neq 0$.

\begin{definition}
Let $\mubar^1, \cdots, \mubar^r$ be $\bZ_{n+1}$-weighted partitions of $m$, and $\gamma = (\gamma_1, \cdots, \gamma_p) \in (\bZ_{n+1}^* )^p$. Let
$$
\overline{\cM}_{g, \gamma} (\cY, \mubar^1, \cdots, \mubar^r)
$$
be the moduli stack of relative stable maps to $(\cY, z_1, \cdots, z_r)$ with ramification profiles $\mubar^1, \cdots, \mubar^r$, and $p$ non-relative marked points, with (nontrivial) monodromies $\gamma_1, \cdots, \gamma_p$.
\end{definition}

In order for the moduli space $\Mbar_{g, \gamma}(\cY, \mubar^1,\cdots,\mubar^r)$ to be non-empty, we must have the condition
$$
\sum_{i=1}^{p}\gamma_i+\sum_{i=1}^{r}\sum_{j=1}^{l(\mu^i)}k^i_j=0\in\bZ_{n+1},
$$

The virtual dimension of $\Mbar_{g, \gamma}(\cY, \mubar^1,\cdots,\mubar^r)$ is equal to
$$
\vdim (\Mbar_{g, \gamma}(\cY, \mubar^1, \cdots, \mubar^r)) := 2g-2+p+\sum_{i=1}^{r}l(\mubar^i)-(r-2)m.
$$
We will also consider the disconnected version  $\Mbar^\bullet_{\chi, \gamma}(\cY, \mubar^1,\cdots,\mubar^r)$, where the domain curve $C$ is allowed to be disconnected, and $\chi := 2(h^0(\cO_{C})-h^1(\cO_{C}))$.

\subsection{Rubber moduli space}

We will also consider relative maps to the nonrigid target $\cY$. Let $\mubar$, $\nubar$ be two $\bZ_{n+1}$-weighted partitions of $m$. The \emph{rubber} moduli space
$$
\Mbar_{g, \gamma} (\cY, \mubar, \nubar)^\sim
$$
is defined to parameterize relative stable maps to $\cY$, relative to $0$ and $\infty$, but up to the $\bC^*$-scaling on $\cY$. In other words, two relative stable maps will be regarded as equivalent, if they differ by a $\bC^*$-action on $\cY$.

Alternatively, the rubber moduli space can be defined more precesly as follows. Consider the moduli space $\Mbar_{g, \gamma} (\cY, \nubar)$ with one relative divisor $D = \infty$. Its $\bC^*$-fixed loci consists of relative stable maps which are $\bC^*$-fixed over the rigid component $\cY$, but still admits continuous moduli over the bubble components. The rubber moduli space $\Mbar_{g, \gamma} (\cY, \mubar, \nubar)^\sim$ can then be defined as the open and closed substack in $\Mbar_{g, \gamma} (\cY, \nubar)^{\bC^*}$, consisting of relative stable maps whose ramification profile, at the first node connecting the rigid component $\cY$ and the bubbles, is $\mubar$.

Similarly, the disconnected version is denoted as $\Mbar^\bullet_{g, \gamma} (\cY, \mubar, \nubar)^\sim$.

\subsection{Obstruction bundle}\label{obs bundle}
Let $\pi:\cU\to \Mbar_{g, \gamma}(\cY, \mubar^1,\cdots,\mubar^r)$ be the universal domain curve and $\cT$ be the universal target. There is a universal map $F:\cU\to \cT$ and a contraction map $\tpi: \cT\to \cY$. Define
$$V_1 = R^1\pi_*\tilde{F}^*L, \qquad V_2 = R^1\pi_* \tilde{F}^*L^{-1},$$
where $\tilde{F}=\tilde\pi\circ F:\mathcal{U}\to \cY$. The rank of $V_1$ is
$$
g-1+\sum_{i=1}^{p}\frac{\gamma_i}{n+1}+\sum_{i=1}^{r}\sum_{j=1}^{l(\mu^i)}
\frac{k^i_j}{n+1}+\delta,
$$
where we identify $\bZ_{n+1}$ with the set $\{0,\cdots,n\}$ and $\delta$ is defined to be
$$\delta=\left\{\begin{array}{ll}1, & \qquad \textrm{if all monodromies on the domain curve are trivial,}\\
0, & \qquad \textrm{otherwise}.\end{array} \right.$$
Similarly, the rank of $V_2$ is
$$
g-1+\sum_{i=1}^{p}\frac{n+1-\gamma_i}{n+1}+\sum_{i=1}^{r}\sum_{j=1}^{l(\mu^i)}
\frac{n+1-k^i_j-(n+1)\delta_{0,k^i_j}}{n+1}+\delta,
$$
where
$$\delta_{0,x}=\left\{\begin{array}{ll}1, & \qquad x=0,\\
0, & \qquad x\neq 0.\end{array} \right.$$
So the rank of the bundle $V:=V_1\oplus V_2$
is equal to
$$
\rk(V) = 2g-2+p+\sum_{i=1}^{r}l''(\mubar^i)+2\delta.
$$

\section{Relative Gromov--Witten theory of $[\bC^{2}/\bZ_{n+1}] \times \bP^1$}\label{relative}

\subsection{Relative GW invariants} \label{relative-GW}
Recall that $\cX=[\bC^{2}/\bZ_{n+1}]\times \bP^1$, $\cY = B\bZ_{n+1} \times \bP^1$. Let $z_1,\cdots,z_r$ be $r$ points on $\cY$ and $\mubar^1,\cdots,\mubar^r$ be $\bZ_{n+1}$-weighted partitions of $m$. We are interested in the GW theory of $\cX$, relative to the $r$ fibers $[\bC^{2}/\bZ_{n+1}]\times \{z_i\}$.

Let $\gamma=(\gamma_{1}, \cdots, \gamma_{p})$ be a vector of nontrivial elements in $\bZ_{n+1}$, with $l(\gamma)=p$. Define
$$\Mbar_{g, \gamma}(\cX, \mubar^1,\cdots,\mubar^r)$$
to be the moduli space of relative stable maps to $(\cX, [\bC^{2}/\bZ_{n+1}]\times \{z_1\},\cdots, [\bC^{2}/\bZ_{n+1}]\times \{z_r\})$, with ramification profiles $\mubar^1,\cdots,\mubar^r$, and $p$ non-relative marked points $x_1,\cdots,x_p$ on the domain curve with monodromies $\gamma_1,\cdots,\gamma_p\in\bZ_{n+1}$. The moduli space $\Mbar_{g, \gamma}(\cX, \mubar^1,\cdots,\mubar^r)$ is noncompact.

Recall that we have a $T$-action on the fibers of $\cX$ with weights $t_1$, $t_2$, which induces a $T$-action on $\Mbar_{g, \gamma}(\cX, \mubar^1,\cdots,\mubar^r)$. The fixed loci of this action is
$$
\Mbar_{g, \gamma}(\cX, \mubar^1,\cdots,\mubar^r)^T=\Mbar_{g, \gamma}(\cY, \mubar^1,\cdots,\mubar^r).
$$
Therefore, although $\Mbar_{g, \gamma}(\cX, \mubar^1,\cdots,\mubar^r)$ is noncompact, the fixed loci $\Mbar_{g, \gamma}(\cX, \mubar^1,\cdots,\mubar^r)^T$ is compact. The relative GW invariants can be defined $T$-equivariantly as
$$
\langle \mubar^1,\cdots,\mubar^r\rangle^{\cX,\circ}_{g,\gamma}:= \frac{1}{|\Aut(\mu^1)|\cdots|\Aut(\mu^r)|} \int_{\left[ \Mbar_{g, \gamma}(\cX, \mubar^1,\cdots,\mubar^r)^T \right]^\vir}\frac{1}{e_T(N^\vir)},
$$
where $N^\vir$ is the virtual normal bundle and $e_T(-)$ is the $T$-equivairant Euler class. Here the factors $|\Aut(\mu^i)|$ come from the convention that we treat the relative marked points as \emph{unordered}.

In other words,
\begin{eqnarray*}
\langle \mubar^1,\cdots,\mubar^r\rangle^{\cX,\circ}_{g,\gamma} &=&
\frac{1}{|\Aut(\mu^1)|\cdots|\Aut(\mu^r)|}\int_{\left[ \Mbar_{g, \gamma}(\cY, \mubar^1,\cdots,\mubar^r) \right]^\vir}\frac{e_T(R^1\pi_*\tilde{F}^*(L\oplus L^{-1}))}{e_T(R^0\pi_*\tilde{F}^*(L\oplus L^{-1}))}\\
&=&\frac{1}{|\Aut(\mu^1)|\cdots|\Aut(\mu^r)|}\int_{\left[ \Mbar_{g, \gamma}(\cY, \mubar^1,\cdots,\mubar^r) \right]^\vir}\frac{e_T(V)}{e_T(R^0\pi_*\tilde{F}^*(L\oplus L^{-1}))},
\end{eqnarray*}
where $V$ is the obstruction bundle defined in the last section. The rank of $R^0\pi_*\tilde{F}^*(L\oplus L^{-1})$ is equal to $2\delta$ and $e_T(R^0\pi_*\tilde{F}^*(L\oplus L^{-1}))=\big(t_1t_2\big)^\delta$.

In later sections, invariants with $r$ relative insertions will be called \emph{$r$-point functions}. The notation
$$
\langle \mubar^1,\cdots,\mubar^r\rangle^{\cX, \bullet}_{\chi,\gamma},
$$
will denote the \emph{disconnected} $r$-point correlation functions, where the domain curve $C$ is allowed to be disconnected, and $\chi := 2(h^0(\cO_{C})-h^1(\cO_{C}))$.

\subsection{$2$-point Rubber invariants and $(t_1 + t_2)$-divisibility} \label{rubber-section}

We are also interested in the case when the target is nonrigid. The rubber invariants $\langle \mubar,\nubar \rangle^{\cX,\circ,\backsim }_{g,\gamma}$ can be similarly defined:
$$
\langle \mubar,\nubar\rangle^{\cX,\circ,\sim }_{g,\gamma}=
\frac{1}{|\Aut(\mu)||\Aut(\nu)|}\int_{\left[ \Mbar_{g, \gamma}(\cY, \mubar,\nubar)^\sim \right]^\vir}\frac{e_T(V)}{e_T(R^0\pi_*\tilde{F}^*(L\oplus L^{-1}))}.
$$
Here we abuse the notations $V$, $\pi$ and $\tilde{F}$ for their counterparts in the nonrigid case.

The virtual dimension $d$ of $\Mbar_{g, \gamma}(\cY, \mubar,\nubar)^\sim$ is
$$
\vdim = 2g-3+p+l(\mubar)+l(\nubar).
$$
On the other hand, the rank of the obstruction bundle $V=V_1\oplus V_2$ is
$$
\rk (V) = 2g-2+p+l''(\mubar)+l''(\nubar)+2\delta.
$$

Recall that in orbifold GW theory, the Hodge bundle is generalized to the so-called Hurwitz--Hodge bundle. For each character $\chi:\bZ_{n+1}\to\bC^*$, there is an associated Hurwitz--Hodge bundle $\bE_\chi$. Let $\lambda^\chi_i=c_i(\bE_\chi)$ be the $i$-th Chern class of the Hurwitz--Hodge bundle $\bE_\chi$, called the Hurwitz--Hodge class. In our case, the vector bundles $V_1$ and $V_2$ are dual to the Hurwitz--Hodge bundles $\bE_U$ and $\bE_{U^\vee}$, where $U$ and $U^\vee$ denote respectively the fundamental representation of $\bZ_{n+1}$ and its dual. Let $r_1$ and $r_2$ be the rank of $V_1$ and $V_2$ respectively. Then we have
\begin{eqnarray*}
	e_T(V_1) &=& t_1^{r_1}-t_1^{r_1-1}\lambda^U_1+\cdots+(-1)^{r_1}\lambda^U_{r_1} \\
	e_T(V_2) &=& t_2^{r_2}-t_2^{r_2-1}\lambda^{U^\vee}_1+
	\cdots+(-1)^{r_2}\lambda^{U^\vee}_{r_2}.
\end{eqnarray*}

There is an orbifold version of the Mumford relation:
$$
e_T(V_1)e_T(V_2) \bigg|_{t_1+t_2=0}=t_1^{r_1}t_2^{r_2}.
$$
In particular,
$$
\lambda^U_{r_1}\lambda^{U^\vee}_{r_2}=0, \qquad \text{if } r_1+r_2>0.
$$

\begin{lemma}
	The rubber invariants $\langle \mubar,\nubar\rangle^{\cX,\circ,\sim }_{g,\gamma}$ vanish unless either
	$$\rk(V) = \vdim = 0, \qquad \text{or} \qquad \rk(V) = \vdim +1 > 0.$$
	In particular, the latter case happens only if $\delta = 1$ or $\delta = 0$ and $l(\mubar) = l''(\mubar)$, $l(\nubar) = l''(\nubar)$.
\end{lemma}

\begin{proof}
	By dimensional reason, for the integral not to vanish, one must have $\rk(V) \geq \vdim$. On the other hand, direct comparison shows that $\rk(V) \leq \vdim +1$. It suffices to show that the invariants vanish when $\rk(V) = \vdim >0$, which follows from $\lambda^U_{r_1}\lambda^{U^\vee}_{r_2}=0$ and dimension counting in the integral. The latter case happens only if $l' (\mubar) + l'(\nubar) = 2 \delta$, which implies either $\delta = 1$ or $\delta = l'(\mubar) = l'(\nubar) = 0$.
\end{proof}

We now analyze the rubber invariants in different contexts.

\begin{enumerate}[a)]
	
\setlength{\parskip}{1ex}
	
\item $\delta = 1$, i.e.  all monodromies around loops on the domain curve are zero. In other words, $p=l''(\mubar)=l''(\nubar)=0$, and $\rk(V) = 2g$. The invariants simply reduce to the smooth case in \cite{Bry-Pan}.

\item $\delta = 0$ and $\rk(V) = \vdim = 0$. There are only a few possibilities in this case and one can compute the invariants directly by naive counting.

\begin{enumerate}[$\bullet$]

\item $g=p=0$, $l(\mubar)=2$, $l''(\mubar)=l''(\nubar)=l(\nubar)=1$.
$$\langle \mubar,\nubar\rangle^{\cX,\circ,\sim }_{0,\emptyset} = \frac{1}{(n+1)|\Aut(\mu)|}.$$

\item $g=p=0$, $l(\mubar) = l''(\mubar) =2$, $l(\nubar)=1$, $l''(\nubar)=0$.
$$\langle \mubar,\nubar\rangle^{\cX,\circ,\sim }_{0,\emptyset} = \frac{1}{(n+1)|\Aut(\mu)|}.$$

\item $g=0$, $p=1$, $l(\mubar)=1$, $l''(\mubar)=0$, $l(\nubar)=l''(\nubar)=1$.
$$\langle \mubar,\nubar\rangle^{\cX,\circ,\sim }_{0,\gamma} = \frac{1}{n+1}.$$

\end{enumerate}

\item $\delta = 0$ and $\rk (V) = \vdim +1 > 0$, in which case $l(\mubar) = l''(\mubar)$, $l(\nubar) = l''(\nubar)$, i.e. all monodromies are nontrivial. Mumford's relation $\lambda^U_{r_1-1}\lambda^{U^\vee}_{r_2}=\lambda^U_{r_1}\lambda^{U^\vee}_{r_2-1}$ implies
\begin{eqnarray*}
\langle \mubar,\nubar\rangle^{\cX,\circ,\sim }_{g,\gamma} &=&
	\frac{(-1)^{r_1+r_2-1}}{|\Aut(\mu)||\Aut(\nu)|}\int_{\left[ \Mbar_{g, \gamma}(\cY, \mubar,\nubar) \sslash \bC^* \right]^\vir} \left( t_1\lambda^U_{r_1-1}\lambda^{U^\vee}_{r_2}+t_2
	\lambda^U_{r_1}\lambda^{U^\vee}_{r_2-1} \right) \\
&=& \frac{(-1)^{r_1+r_2-1} (t_1 + t_2)}{|\Aut(\mu)||\Aut(\nu)|}\int_{\left[ \Mbar_{g, \gamma}(\cY, \mubar,\nubar) \sslash \bC^* \right]^\vir}
\lambda^U_{r_1}\lambda^{U^\vee}_{r_2-1}.
\end{eqnarray*}
In particular, it is divisible by $(t_1 + t_2)$. This is the main case we will treat in the following sections.

\end{enumerate}

The argument for the $(t_1+t_2)$-divisibility is valid in more general contexts. We summarize this feature in the following lemma, whose proof is exactly the same as above.

\begin{lemma} \label{divisibility}
If $\rk(V)>0$, then the invariants $\langle \mubar^1,\cdots,\mubar^r\rangle^{\cX,\circ}_{g,\gamma}$ and $\langle \mubar, \nubar \rangle^{\cX,\circ,\sim }_{g,\gamma}$ are divisible by $(t_1 + t_2)$.
\end{lemma}

\subsection{$3$-point functions from rubber}

According to the degeneration formula \cite{Abr-Fan}, $r$-point functions $\langle \mubar^1,\cdots,\mubar^r\rangle^{\cX,\circ}_{g,\gamma}$ can be determined by $3$-point functions. Hence we are particularly interested  in the case $r=3$. Moreover, under the generation conjecture (see Section \ref{sec-gen-conj}), it suffices to consider the following three special cases.

Let $\mubar,\nubar,\rhobar$ be three $\bZ_{n+1}$-weighted partitions of $m$. In the following three subsections, we will study the relative GW invariants
$$
\langle \mubar,\nubar,\rhobar\rangle^{\cX,\circ}_{g,\gamma}
$$
for
$$
\rhobar=\{(1,0),\cdots,(1,0)\}, \qquad \{(2,0),(1,0),\cdots,(1,0)\}, \qquad \textrm{or} \qquad \{(1,k),(1,0),\cdots,(1,0)\},
$$
where $k\neq0$. For simplicity, we abbreviate the notations as
$$(1,0)^m, \qquad (2,0)(1,0)^{m-2}, \qquad (1,k)(1,0)^{m-1}.$$

In this subsection, apart from several exceptional cases, the $3$-point functions above can be reduced to the rubber invariants of the previous section. For the exceptional cases, $3$-point functions can be easily computed.

\subsubsection{Case $\rhobar=(1,0)^m$} \label{1,0}

By computations in Section \ref{moduli}, the virtual dimension of the moduli space $\Mbar_{g, \gamma}(\cY, \mubar,\nubar,\rhobar)$ is
$$
\vdim (\Mbar_{g, \gamma}(\cY, \mubar, \nubar, \rhobar)) = 2g-2+p+l(\mubar)+l(\nubar).
$$
On the other hand, by the computation in Section \ref{obs bundle}, the rank of the obstruction bundle $V=V_1\oplus V_2$ is
$$
\rk(V) = 2g-2+p+l''(\mubar)+l''(\nubar)+2\delta.
$$

We consider the two possibilities $\delta = 1$ and $\delta = 0$.

\begin{enumerate}[a)]

\setlength{\parskip}{1ex}

\item $\delta=1$, i.e. all the monodromies around loops on the domain curve are trivial.

In this case $l''(\mubar)=l''(\nubar)=p=0$. We have
$$\rk(V) \leq \vdim (\Mbar_{g, \gamma}(\cY, \mubar, \nubar, \rhobar)).$$
By dimensional reason, for the invariants to be nontrivial, the equality needs to hold, which happens only if $l(\mubar)=l(\nubar)=1$, and hence $\vdim = \rk (V) = 2g$. Either by a $(t_1+ t_2)$-divisibility argument or by the smooth case \cite{Bry-Pan}, the only nontrivial case is $g=0$, and $\langle \mubar,\nubar,\rhobar\rangle^{\cX,\circ}_{0,\emptyset}=\frac{1}{m(n+1)t_1t_2}$.

\item $\delta=0$. The rank of $V$ is $\rk(V) = 2g-2+p+l''(\mubar)+l''(\nubar)$. We also have
$$\rk (V) \leq \vdim(\Mbar_{g, \gamma}(\cY, \mubar, \nubar, \rhobar)) = 2g-2+p+l(\mubar)+l(\nubar).$$
Again, dimension counting forces the equality to hold.

If $\vdim = \rk(V) >0$, then by Lemma \ref{divisibility}, the invariant is a polynomial in $t_1$, $t_2$ divisible by $(t_1+t_2)$, and hence has to be zero by dimensional constraints.

If $\vdim = \rk(V)=0$, we must have $p=0$, $g=0$, and $l(\mubar)=l(\nubar)=l''(\mubar)=l''(\nubar)=1$. The only nontrivial invariant is for $\rhobar = (1,0)^m, \mubar=(m,k), \nubar=(m,-k),k\neq 0$, which is $\frac{1}{m(n+1)}$.

In general, this case will contribute to disconnected invariants with $\rhobar = (1,0)^m,\mubar=-\nubar,l(\mubar)=l''(\mubar)$. The partition function is
$$
Z'_\GW([\bC^2/\bZ_{n+1}]\times \bP^1)_{\mubar, \nubar, \rhobar}= Z_{\mubar}^{-1},
$$
where $Z_{\mubar}=|\Aut(\mubar)|(n+1)^{l(\mubar)}\prod_{i=1}
^{l(\mubar)}\mu_i$.

\end{enumerate}

\subsubsection{Case $\rhobar= (2,0)(1,0)^{m-2}$}

In this case, we will reduce the relative GW invariants $\langle \mubar,\nubar,\rhobar\rangle^{\cX,\circ}_{g,\gamma}$ to the rubber invariants $\langle \mubar,\nubar\rangle^{\cX,\circ,\sim}_{g,\gamma}$. The key point is that one can replace a nonrigid invariant with a rigid invariant by imposing the condition that a marked point on the domain curve lies on a fixed fiber of $L\oplus L^{-1}\to \cY$. Consider the following descendent $2$-point relative invariants.
\begin{eqnarray*}
\langle \mubar|\tau_1[F]|\nubar\rangle^{\cX,\circ}_{g,\gamma} \ =\ \langle \mubar|\tau_1(\one)|\nubar\rangle^{\cX,\circ,\sim}_{g,\gamma}
\ =\ (2g-2+p+l(\mubar)+l(\nubar))\langle \mubar,\nubar\rangle^{\cX,\circ,\sim}_{g,\gamma},
\end{eqnarray*}
where $[F]$ is the fiber class and the second equality is the dilaton equation.

On the other hand, $\langle \mubar|\tau_1[F]|\nubar\rangle^{\cX,\circ}_{g,\gamma}$ can be computed by the degeneration formula. Let the base $\cY$ degenerate into two components, such that the two relative marked points lie on one component and the fiber insertion lies on the other. Degeneration formula implies
$$
\langle \mubar|\tau_1[F]|\nubar\rangle^{\cX,\circ}_{g,\gamma}
=\sum_{\etabar,\gamma'\sqcup\gamma''=\gamma,\Gamma_1,\Gamma_2}
\langle \mubar,\nubar,\etabar\rangle^{\cX,\bullet}_{\Gamma_1,\gamma'}Z_{\etabar}(t_1t_2)^{l'(\etabar)}
\langle -\etabar|\tau_1[F] \rangle^{\cX,\bullet}_{\Gamma_2,\gamma''},
$$
where $Z_{\etabar}=|\Aut(\etabar)|(n+1)^{l(\etabar)}\prod_{i=1}^{l(\etabar)}\eta_i$, and the summation is over all domain curve configurations $\Gamma_1$, $\Gamma_2$, such that the glued curve over $\Gamma_1$, $\Gamma_2$ is connected.

The second factor, which is a priori an integral over the moduli of relative stable maps with disconnected domains, can be written as that over a product of moduli spaces with connected domains. Each such moduli space is either of the form
$$\Mbar_{g_i, \gamma^i\sqcup(\one)}(\cY, -\etabar^i), \qquad \text{or} \qquad \Mbar_{g_i, \gamma^i}(\cY, -\etabar^i),  $$
depending on whether the insertion $\tau_1[F]$ is on the particular connected component or not.

The virtual dimensions of $\Mbar_{g_i, \gamma^i\sqcup(\one)}(\cY, -\etabar^i)$ and  $\Mbar_{g_i, \gamma^i}(\cY, -\etabar^i)$ are respectively
$$
2g_i-1+p_i+m_i+l(\etabar^i), \qquad 2g_i-2+p_i+m_i+l(\etabar^i),
$$
where $p_i, m_i, \etabar^i$ are the corresponding data associated to the component. The rank of the obstruction bundle $V$ over both moduli spaces is equal to
$$
\rk(V) = 2g_i-2+p_i+l''(\etabar^i)+2\delta.
$$

\begin{enumerate}[a)]
	
\setlength{\parskip}{1ex}

\item $\delta=0$. We have
$$\vdim (\Mbar_{g_i, \gamma^i\sqcup(\one)}(\cY, -\etabar^i)) \geq \rk (V) + 2, \qquad \vdim(\Mbar_{g_i, \gamma^i}(\cY, -\etabar^i)) > \rk (V).$$
The only nontrivial invariants come from the first type of components, and since $\deg \tau_1 [F] = 2$, the equality holds, i.e. $m_i=1$ and $l''(\etabar^i)=l(\etabar^i)=1$. Now $\gamma^i\neq\emptyset$ in order for the sum of monodromies at all marked points to vanish ($l''(\etabar^i)=l(\etabar^i)=1$ implies the only relative marked point has nontrivial monodromy).

However if $g_i>0$, together with $\gamma^i \neq \emptyset$ it would also imply $\rk(V) > 0$ and the invariant is divisible by $(t_1+t_2)$, which forces it to vanish by dimensional reasons. In short, the only invariant that survives is when $g_i=0$, and $p_i = l''(\etabar^i)=m_i = 1$; so $-\etabar^i = (1,-k)$, for some $k\neq 0$. The restriction of $\gamma''$ on this component is $(k)$, and hence $\gamma' = \gamma \backslash (k)$. The contribution of this component to the invariants $\langle -\etabar|\tau_1[F] \rangle^{\cX,\bullet}_{\Gamma_2,\gamma''}$ is $\frac{1}{n+1}$.

\item $\delta=1$. Counting dimensions, for $\Mbar_{g_i, \gamma^i\sqcup(\one)}(\cY, -\etabar^i)$, there are two possibilities. The first one is that $m_i = 2$, $p_i = 0$, $g_i=0$, $l(\etabar^i) =1$, and hence $\etabar^i = (2,0)$. The contribution of this component to the invariant $\langle - \etabar^i | \tau_1 [F] \rangle_{\Gamma_2, \gamma''}^{\cX, \circ}$ is $\frac{1}{2(n+1)t_1t_2}$. The second one is that $m_i = 1$, $p_i = 0$, $g_i>0$, $l(\etabar^i) =1$, and hence $\etabar^i = (1,0)$. For $\Mbar_{g_i, \gamma^i}(\cY, -\etabar^i)$, we must have $m_i = l(\etabar^i) = 1$, $p_i = 0$, $g_i=0$, and hence $\etabar^i = (1,0)$.

\end{enumerate}

Combining a) and b), for invariants $\langle -\etabar|\tau_1[F] \rangle^{\cX,\bullet}_{\Gamma_2,\gamma''}$, relative insertions that could appear in the gluing formula are $\etabar = (2,0)(1,0)^{m-2}$, $\etabar = (1,k)(1,0)^{m-1}$, $k\neq 0$, or $\etabar =(1,0)^{m}$ . In the first two cases, $\langle -\etabar|\tau_1[F] \rangle^{\cX,\bullet}_{\Gamma_2,\gamma''}$ are exactly canceled by the gluing factor $Z_\eta(t_1t_2)^{l'(\etabar)}$. In the third case, the factor $\langle \mubar,\nubar,\etabar\rangle^{\cX,\bullet}_{\Gamma_1,\gamma'}$ must be an integral over moduli space of genus 0 stable maps, $l(\mubar)=l(\nubar)=1, \mubar=-\nubar$, and $\gamma'=\emptyset$ by Section \ref{1,0} and this factor is equal to $\frac{1}{m(n+1)(t_1t_2)^\delta}$. So when $l(\mubar)=l(\nubar)=1,p=0$ we have
\begin{equation}
\langle \mubar|\tau_1[F]|\nubar\rangle^{\cX,\circ}_{g,\gamma}=\langle \mubar,\nubar,(2,0)(1,0)^{m-2} \rangle^{\cX,\circ}_{g,\gamma} +(t_1t_2)^{1-\delta}\langle (1,0)|\tau_1[F]|\rangle^{\cX,\circ}_{g,\emptyset}.
\end{equation}
When $l(\mubar)+l(\nubar)\geq 3$ or when $p>0$
\begin{equation}
\langle \mubar|\tau_1[F]|\nubar\rangle^{\cX,\circ}_{g,\gamma}=\langle \mubar,\nubar,(2,0)(1,0)^{m-2} \rangle^{\cX,\circ}_{g,\gamma} + \sum_{k=1}^n \langle \mubar,\nubar,(1,k)(1,0)^{m-1} \rangle^{\cX,\circ}_{g,\gamma \backslash (k) }.
\end{equation}

Recall the rigidification result obtained at the beginning of this subsection, and we conclude the following.
\begin{lemma}
\begin{enumerate}[(i)]

\item If $l(\mubar)=l(\nubar)=1$ and $p=0$, then
$$
\langle \mubar,\nubar,(2,0)(1,0)^{m-2} \rangle^{\cX,\circ}_{g,\gamma} +(t_1t_2)^{1-\delta}\langle (1,0)|\tau_1[F]|\rangle^{\cX,\circ}_{g,\emptyset} = 2g\langle \mubar,\nubar\rangle^{\cX,\circ,\sim}_{g,\gamma}.
$$

\item If $l(\mubar)+l(\nubar)\geq 3$ or $p>0$, then
$$
\langle \mubar,\nubar,(2,0)(1,0)^{m-2} \rangle^{\cX,\circ}_{g,\gamma} + \sum_{k=1}^n \langle \mubar,\nubar,(1,k)(1,0)^{m-1} \rangle^{\cX,\circ}_{g,\gamma \backslash (k)}
= (2g-2+p+l(\mubar)+l(\nubar))\langle \mubar,\nubar\rangle^{\cX,\circ,\sim}_{g,\gamma},
$$
\end{enumerate}
\end{lemma}

\begin{lemma} \label{cot}
	$$\sum_{g\geq 1}(-1)^g z^{2g-1}\langle (1,0)|\tau_1[F]|\rangle^{\cX,\circ}_{g,\emptyset}
=\frac{(t_1+t_2)}{2(n+1)t_1t_2}\left(\frac{\cosh \frac{z}{2}}{\sinh \frac{z}{2}}-\frac{2}{z}\right)$$
\end{lemma}

The proof of Lemma \ref{cot} will be given in Section \ref{pflemma}

\subsubsection{Case $\rhobar= (1,k)(1,0)^{m-1}$, $k\neq 0$}
In this case, similar argument still works to reduce $\langle \mubar,\nubar,\rhobar\rangle^{\cX,\circ}_{g,\gamma}$ to the rubber integral. First there is a similar rigidification argument
$$
\langle \mubar|\tau_0[\iota_*k]|\nubar\rangle^{\cX,\circ}_{g,\gamma} \ =\  \langle \mubar|\tau_0(k)|\nubar\rangle^{\cX,\circ,\sim}_{g,\gamma} \ =\ \langle \mubar,\nubar\rangle^{\cX,\circ,\sim}_{g,\gamma\sqcup(k)},
$$
where $\iota:F\to \cX$ is the inclusion of the fiber and we view $k$ as a twisted sector of $[\bC^2/\bZ_{n+1}]$.

On the other hand, we can still use the degeneration formula. Degenerate the base $\cY$ into two components, such that the two relative marked points lie on one component and the fiber insertion lies on the other. We have the following degeneration formula
$$
\langle \mubar|\tau_0[\iota_*k]|\nubar\rangle^{\cX,\circ}_{g,\gamma}
=\sum_{\etabar,\gamma'\sqcup\gamma''=\gamma,\Gamma_1,\Gamma_2}
\langle \mubar,\nubar,\etabar\rangle^{\cX,\bullet}_{\Gamma_1,\gamma'}Z_{\etabar}(t_1t_2)^{l'(\etabar)}
\langle -\etabar|\tau_0[\iota_*k] \rangle^{\cX,\bullet}_{\Gamma_2,\gamma''},
$$
where $Z_{\etabar}=|\Aut(\etabar)|(n+1)^{l(\etabar)}\prod_{i=1}^{l(\etabar)}\eta_i$, and we are summing over all domain curve configurations $\Gamma_1$, $\Gamma_2$ such that the glued curve over $\Gamma_1$, $\Gamma_2$ is connected.

As before, the second factor is an integral over a product of moduli spaces of relative stable maps with connected domains, each of the form
$$ \Mbar_{g_i, \gamma^i\sqcup(k)}(\cY, -\etabar^i) \qquad \text{or} \qquad \Mbar_{g_i, \gamma^i}(\cY, -\etabar^i), $$
depending on whether the insertion $\tau_0[\iota_*k]$ is on the particular connected component or not.

The virtual dimensions of $\Mbar_{g_i, \gamma^i\sqcup(k)}(\cY, -\etabar^i)$ and  $\Mbar_{g_i, \gamma^i}(\cY, -\etabar^i)$ are respectively
$$
2g_i-1+p_i+m_i+l(\etabar^i), \qquad 2g_i-2+p_i+m_i+l(\etabar^i),
$$
where $p_i$, $m_i$, $\etabar^i$ are the corresponding data associated to the component. The ranks of the obstruction bundle $V$ are respectively
$$
 2g_i-1+p_i+l''(\etabar^i)+2\delta, \qquad 2g_i-2+p_i+l''(\etabar^i)+2\delta.
$$

\begin{enumerate}[a)]
	
\setlength{\parskip}{1ex}

\item $\delta = 1$, which only happens for the second type of components, since the first type already has a nontrivial marking $k$. To get nontrivial invariants, one must have $\rk(V) \geq \vdim$, which implies $m_i = l(\etabar^i) = 1$. Hence $\etabar^i = (1,0)$.

This also forces that $\rk(V) = \vdim$, and hence $\rk(V) = \vdim = 0$ since otherwise the invariants vanish by $(t_1+t_2)$-divisibility. Hence $g_i = p_i = 0$. The invariant $\langle -\etabar \rangle^{\cX,\circ}_{\Gamma_2,\gamma''}$ contributed by the component is $\frac{1}{(n+1)t_1t_2}$.

\item $\delta = 0$. For the two types of components, we always have respectively
$$\vdim \geq \rk(V)+1, \qquad \vdim > \rk(V).$$
Only the first type contributes nontrivially, and the equality must hold. However, in this case we must also need $\rk(V) =0$, since otherwise the invariants will be divisible by $(t_1+ t_2)$ and therefore vanish by dimensional constraint. The only possibility is $g_i = p_i = 0$, $m_i = l(\etabar^i) = l''(\etabar^i) = 1$; so $\etabar^i = (1,k)$, $k\neq 0$. The invariant $\langle -\etabar|\tau_0[\iota_*k] \rangle^{\cX,\circ}_{\Gamma_2,\gamma''}$ contributed by this component is $\frac{1}{n+1}$.

\end{enumerate}

Combining a) and b), for invariants $\langle -\etabar|\tau_0[\iota_*k] \rangle^{\cX,\bullet}_{\Gamma_2,\gamma''}$, relative insertions that could appear are $\etabar = \rhobar = (1,k)(1,0)^{m-1}$, $k\neq 0$, and we compute
$$\langle \mubar|\tau_0[\iota_*k]|\nubar\rangle^{\cX,\circ}_{g,\gamma}=\langle \mubar,\nubar,(1,k)(1,0)^{m-1} \rangle^{\cX,\circ}_{g,\gamma}.$$
Recall the rigidification result obtained at the beginning of this subsection, and we conclude the following
\begin{lemma}
For $k\neq 0$,
$$
\langle \mubar,\nubar,(1,k)(1,0)^{m-1} \rangle^{\cX,\circ}_{g,\gamma} = \langle \mubar, \nubar \rangle^{\cX, \circ, \sim}_{g, \gamma\sqcup (k)}.
$$
\end{lemma}

\section{Relative Gromov--Witten theory of $\cA_n \times \bP^1$}
\subsection{Geometry of $\cA_n \times \bP^1$}
We define a $\bZ_{n+1}$ action on $\bC^2$ by
$$
\zeta\cdot(z_1,z_2):=(\zeta z_1,\zeta^{-1} z_2),
$$
where $\zeta$ is a primitive $(n+1)-$th root of unity viewed as a generator of $\bZ_{n+1}$.
Let $\cA_n \to \bC^2 / \bZ_{n+1}$ be the minimal resolution. Consider an action of the torus $T=(\bC^*)^2$ on $\bC^2$ via the standard diagonal action with torus weights $t_1,t_2$. The $T$ action on $\bC^2$ induces a $T$ action on $\cA_n$ with fixed points $p_1,\cdots,p_{n+1}$. The tangent weights at the fixed point $p_i$ are
$$w_i^- := (n+2-i)t_1-(i-1)t_2, \qquad w_i^+ := (i-n-1)t_1+it_2.$$

Let $\cF_{[\bC^2/\bZ_{n+1}]}$ denote the space of $\bZ_{n+1}$-weighted partitions, and $\cF_{\cA_n}$ denote the space of $H^*(\cA_n)$-weighted partitions.

A curve class $(\beta, m)\in H_2(\cA_n\times \bP^1, \bZ)$ is specified by the datum $(\beta, m):= m[\bP^1] + \beta$, where $m$ is the fixed integer as before, and $\beta\in H_2(\cA_n, \bZ)$. Let $E_1,\cdots,E_n$ be the $n$ exceptional curves in $\cA_n$. Then the intersection matrix of $E_1,\cdots,E_n$ is given by $E_i\cdot E_i=-2$, $E_i\cdot E_{i+1}=1$. We view $E_1,\cdots,E_n$ as a basis of $H^2(\cA_n, \bQ)$ and we let $\omega_1, \cdots, \omega_n \in H^2(\cA_n, \bQ)$ be the dual basis to $E_1,\cdots,E_n$ with respect to the above intersection pairing. For any $\beta\in H_2(\cA_n, \bQ)$ and $\omega\in H^2(\cA_n, \bQ)$, let $\beta\cdot\omega\in\bQ$ be the natural pairing.

\subsection{2-point rubber invariants} \label{sec:Anrubber}
Let $\vec\mu^1, \cdots, \vec\mu^r\in \cF_{\cA_n}$ with
$$
\vec\mu^i=\{(\mu^i_1,\gamma^i_1),\cdots,(\mu^i_{l(\vec\mu^i)},\gamma^i_{l(\vec\mu^i)})\}.
$$
Define the Gromov-Witten invariant
$$
\langle \vec\mu^1, \cdots, \vec\mu^r \rangle^{\cA_n\times \bP^1, \circ}_{g, (\beta, m)}:=\frac{1}{\prod_{i=1}^r|\Aut(\mu^i)|}
\int_{[\Mbar_g(\cA_n\times \bP^1,(\beta, m),\mu^1,\cdots,\mu^r)]^\vir}\prod_{i=1}^r\prod_{j=1}^{l(\vec\mu^i)}\ev^*_{ij}\gamma^i_j.
$$
We can also define the disconnect Gromov-Witten invariant $\langle \vec\mu^1, \cdots, \vec\mu^r \rangle^{\cA_n\times \bP^1, \bullet}_{\chi, (\beta, m)}$ and the rubber Gromov-Witten invariant $\langle \vec\mu, \vec\nu \rangle_{g, (\beta, m)}^{\cA_n\times \bP^1,\circ, \sim}$ in a similar matter as we did in Section \ref{relative-GW} and Section \ref{rubber-section}.

The generating functions for the relative GW theory of $\cA_n\times \bP^1$ and the rubber theory are defined in \cite{Mau} as
\begin{eqnarray*}
Z'_\GW(\cA_n\times \bP^1)_{\vec\mu, \vec\nu}^{\circ, \sim} &:=& \sum_{g\geq 0}\sum_{\beta\geq 0} z^{2g} s_1^{\beta\cdot\omega_1} \cdots s_n^{\beta\cdot\omega_n} \langle \vec\mu, \vec\nu \rangle_{g, (\beta, m)}^{\cA_n\times \bP^1,\circ, \sim}, \\
Z'_\GW(\cA_n\times \bP^1)^\circ_{\vec\mu^1, \cdots, \vec\mu^r} &:=& \sum_{g\geq 0}\sum_{\beta\geq 0} z^{2g-2} s_1^{\beta\cdot \omega_1} \cdots s_n^{\beta\cdot \omega_n} \langle \vec\mu^1, \cdots, \vec\mu^r \rangle^{\cA_n\times \bP^1, \circ}_{g, (\beta, m)},\\
Z'_\GW(\cA_n\times \bP^1)_{\vec\mu^1, \cdots, \vec\mu^r} &:=& \sum_{\chi}\sum_{\beta\geq 0} z^{-\chi} s_1^{\beta\cdot \omega_1} \cdots s_n^{\beta\cdot \omega_n} \langle \vec\mu^1, \cdots, \vec\mu^r \rangle^{\cA_n\times \bP^1, \bullet}_{\chi, (\beta, m)},
\end{eqnarray*}
where $\vec\mu^i\in \cF_{\cA_n}$. Let
\begin{eqnarray*}
Z'_{\GW,\beta=0}(\cA_n\times \bP^1)_{\vec\mu, \vec\nu}^{\circ, \sim}&:=&\sum_{g\geq 0}z^{2g}\langle \vec\mu, \vec\nu \rangle_{g, (0, m)}^{\cA_n\times \bP^1,\circ, \sim}\\
Z'_{\GW,\beta=0}(\cA_n\times \bP^1)^\circ_{\vec\mu^1, \cdots, \vec\mu^r} &:=& \sum_{g\geq 0}\sum_{\beta\geq 0} z^{2g-2} \langle \vec\mu^1, \cdots, \vec\mu^r \rangle^{\cA_n\times \bP^1, \circ}_{g, (0, m)},\\
Z'_{\GW,\beta=0}(\cA_n\times \bP^1)_{\vec\mu^1, \cdots, \vec\mu^r} &:=& \sum_{\chi} z^{-\chi} \langle \vec\mu^1, \cdots, \vec\mu^r \rangle^{\cA_n\times \bP^1, \bullet}_{\chi, (0, m)}
\end{eqnarray*}
denote the $\beta = 0$ parts of $Z'_\GW(\cA_n\times \bP^1)_{\vec\mu, \vec\nu}^{\circ, \sim}$, $Z'_\GW(\cA_n\times \bP^1)^\circ_{\vec\mu^1, \cdots, \vec\mu^r}$ and $Z'_\GW(\cA_n\times \bP^1)_{\vec\mu^1, \cdots, \vec\mu^r}$ respectively. Similarly, let $Z'_{\GW,\beta\neq0}(\cA_n\times \bP^1)_{\vec\mu, \vec\nu}^{\circ, \sim}$, $Z'_{\GW,\beta\neq0}(\cA_n\times \bP^1)^\circ_{\vec\mu^1, \cdots, \vec\mu^r}$ and $Z'_{\GW,\beta\neq0}(\cA_n\times \bP^1)_{\vec\mu^1, \cdots, \vec\mu^r}$ denote the nonzero degree parts of $Z'_\GW(\cA_n\times \bP^1)_{\vec\mu, \vec\nu}^{\circ, \sim}$, $Z'_\GW(\cA_n\times \bP^1)^\circ_{\vec\mu^1, \cdots, \vec\mu^r}$ and $Z'_\GW(\cA_n\times \bP^1)_{\vec\mu^1, \cdots, \vec\mu^r}$ respectively.

Recall the explicit isomorphism between the two Fock spaces. If we identify $\bZ_{n+1}$ with the orbifold cohomology $H^*_\orb([\bC^2/\bZ_{n+1}])$, the isomorphism is given by
$$\Phi: H^*_\orb([\bC^2/\bZ_{n+1}]) \cong H^*(\cA_n),$$
\begin{equation*}
e_0 \mapsto 1, \qquad e_i \mapsto \frac{\zeta^{i/2} - \zeta^{-i/2}}{n+1} \sum_{j=1}^n \zeta^{ij} \omega_j, \qquad 1\leq i\leq n,
\end{equation*}

We have the following results on the $\beta = 0$ part of the generating function.

\begin{proposition} \label{Anrubber}
Let $1\leq j, j' \leq n+1$.
$$
Z'_{\GW,\beta=0}(\cA_n\times \bP^1)_{\big(m,\Phi(e_j)\big), \big(m,\Phi(e_{j'})\big)}^{\circ, \sim} =
\delta_{j+j', n+1} \cdot \frac{t_1+t_2}{2(n+1)} \log \left[ \frac{(e^{mz}-1)(e^{-mz}-1)}{-z^2}  \right] .
$$
\end{proposition}

\begin{proof}
By $T$-localization, we have
\begin{equation}
\langle \big(m,\Phi(e_j)\big), \big(m,\Phi(e_{j'})\big)\rangle_{g, (0, m)}^{\cA_n\times \bP^1,\circ, \sim}=\sum_{i=1}^{n+1} \left. \langle (m), (m)\rangle_{g, (0, m)}^{\bC^2\times \bP^1,\circ, \sim} \right|_{(t_1, t_2) = (w_i^+, w_i^-)} \Phi(e_j)|_{p_i}\Phi(e_{j'})|_{p_i}.
\end{equation}
By Proposition \ref{rig}, Proposition \ref{cotAn}, and Lemma \ref{add},
\begin{equation}
\left. Z'_{\GW,\beta=0}(\bC^2\times \bP^1)_{(m), (m)}^{\circ, \sim} \right|_{(t_1, t_2) = (w_i^+, w_i^-)} =\frac{w^+_i+w^-_i}{2w^+_iw^-_i}\big(\log (e^{mz}-1)+\log(e^{-mz}-1)-\log z-\log(-z)\big).
\end{equation}
Therefore, in order to compute $Z'_{\GW,\beta=0}(\cA_n\times \bP^1)_{\big(m,\Phi(e_j)\big), \big(m,\Phi(e_{j'})\big)}^{\circ, \sim}$, it suffices to compute
$$
\sum_{i=1}^{n+1}\frac{w^+_i+w^-_i}{w^+_iw^-_i}\Phi(e_j)|_{p_i}\Phi(e_{j'})|_{p_i} = (t_1 + t_2) \sum_{i=1}^{n+1}\frac{\Phi(e_j)|_{p_i}\Phi(e_{j'})|_{p_i}}{w^+_iw^-_i} = (t_1+ t_2) \langle \Phi (e_j) , \Phi (e_{j'}) \rangle,
$$
which is $(t_1 + t_2) \delta_{j+j', n+1}$ since $\Phi$ preserves the Poincar\'e pairing.
\end{proof}

\subsection{3-point functions from rubber}
In this subsection, we will be interested in the 3-point function $Z'_\GW(\cA_n\times \bP^1)_{\vec\mu, \vec\nu, \vec\rho}$ and their relationship with the 2-point rubber invariants, where $\vec\rho=(1,1)^m, (1,1)^{m-2}(2,1),$ or $(1,1)^{m-1}(1,\omega)$. Here $\omega\in H^2(\cA_n)$ is a divisor class. Equivalently, let $\rhobar = (1,0)^m$, $(2,0)(1,0)^{m-2}$, or $(1,k)(1,0)^{m-1}$, $k\neq 0$. The correspondents of $\rhobar$ will be $(1,1)^m, (1,1)^{m-2}(2,1)$, or $(1,1)^{m-1}(1,\Phi(e_k))$ respectively.

\subsubsection{Case $\vec\rho = (1,1)^m$} \label{1,1}

In this case we have the following results.

\begin{proposition} \label{3-pt-10}
\begin{enumerate}[1)]
		\item For $\vec\mu=\vec\nu=(m,1)$, we have
		$$
		\langle\vec{\mu},\vec{\nu}, (1,1)^m \rangle^{\cA_n\times \bP^1,\circ}_{0,(0,m)}=\frac{1}{m(n+1) t_1 t_2} ,
		$$
		and $\langle\vec{\mu},\vec{\nu}, (1,1)^m \rangle^{\cA_n\times \bP^1,\circ}_{g,(\beta,m)}=0$ when $g>0$ or  $\beta\neq 0$.
		
		\item For $\vec\mu=(m,\Phi(e_j)), \vec\nu=(m,\Phi(e_{n+1-j}))$, $1\leq j\leq n$, we have
		$$
		\langle\vec{\mu},\vec{\nu},  (1,1)^m \rangle^{\cA_n\times \bP^1,\circ}_{0,(0,m)}=\frac{1}{m(n+1) },
		$$
		and $\langle\vec{\mu},\vec{\nu}, (1,1)^m \rangle^{\cA_n\times \bP^1,\circ}_{g,(\beta,m)}=0$ when $g>0$ or  $\beta\neq 0$.
		
		\item For $l(\vec\mu)+l(\vec\nu)\geq 3$, we have $\langle\vec{\mu},\vec{\nu}, (1,1)^m \rangle^{\cA_n\times \bP^1,\circ}_{g,(\beta,m)}=0$.
	\end{enumerate}
\end{proposition}

\begin{proof}
	First we should notice that by Lemma 4.2 of \cite{Mau} and by dimensional constraints, the only nontrivial invariant for such $\vec\mu$, $\vec\nu$, $\vec\rho$ is indeed when $\beta=0$, $g=0$, $l(\vec\mu)=l(\vec\nu)=1$. Hence 3) follows.
	
	When $\vec\mu=\vec\nu=(m,1)$, $\langle\vec{\mu},\vec{\nu}, (1,1)^m \rangle^{\cA_n\times \bP^1,\circ}_{0,(0,m)}$ is equal to
	$$
	\frac{1}{m}\sum_{i=1}^{n+1}
	\frac{1}{w_i^- w_i^+} = \frac{1}{m(t_1 + t_2)} \sum_{i=1}^{n+1}
	\left( \frac{1}{w_i^-} + \frac{1}{w_i^+} \right) = \frac{1}{m(t_1 + t_2)}
	\left( \frac{1}{w_1^-} + \frac{1}{w_{n+1}^+} \right)=\frac{1}{m(n+1) t_1 t_2}.$$
	Here we have used the fact that $w_i^- + w_i^+ = t_1 + t_2$ and $w_i^+ = - w_{i+1}^-$ for any $i$. This proves 1).
	
	When $\vec\mu=(m,\Phi(e_j))$, $\vec\nu=(m,\Phi(e_{n+1-j}))$, $1\leq j\leq n$, we have
	$$
	\langle\vec{\mu},\vec{\nu}, (1,1)^m \rangle^{\cA_n\times \bP^1,\circ}_{0,(0,m)} = \frac{1}{m}\sum_{i=1}^{n+1}
	\frac{1}{w_i^- w_i^+}\Phi(e_j)|_{p_i}\Phi(e_{n+1-j})|_{p_i}.
	$$
	which is $\frac{1}{m(n+1)}$ by the computation in the proof of Proposition \ref{3-pt-10}. This proves (2).
\end{proof}

\subsubsection{Case $\vec\rho = (1,1)^{m-2} (2,1)$ or $(1,1)^{m-1} (1, \omega_k)$}

For the $\beta\neq 0$ part, the relation between $Z'_{\GW,\beta\neq0}(\cA_n\times \bP^1)_{\vec\mu, \vec\nu}^{\circ, \sim}$ and $Z'_{\GW,\beta\neq0}(\cA_n\times \bP^1)_{\vec\mu, \vec\nu, \vec\rho}$ is studied in \cite{Mau}:

\begin{proposition}[Proposition 4.3 of \cite{Mau}]

\begin{enumerate}[1)]

\item For $\vec\rho=(1,1)^{m-2}(2,1)$, we have
$$
z^{l(\vec\mu)+l(\vec\nu)-1}Z'_{\GW,\beta\neq0}(\cA_n\times \bP^1)^\circ_{\vec\mu, \vec\nu, \vec\rho} =
\frac{\partial}{\partial z} \left( z^{l(\vec\mu)+l(\vec\nu)-2}Z'_{\GW,\beta\neq0}(\cA_n\times \bP^1)_{\vec\mu, \vec\nu}^{\circ, \sim}   \right).
$$

\item For $\vec\rho=(1,1)^{m-1}(1,\omega_k)$, we have
$$
z^{l(\vec\mu)+l(\vec\nu)}Z'_{\GW,\beta\neq0}(\cA_n\times \bP^1)^\circ_{\vec\mu, \vec\nu, \vec\rho}=
s_k\frac{\partial}{\partial s_k} \left( z^{l(\vec\mu)+l(\vec\nu)-2}Z'_{\GW,\beta\neq0}(\cA_n\times \bP^1)_{\vec\mu, \vec\nu}^{\circ, \sim}   \right).
$$
\end{enumerate}

\end{proposition}

For our purpose, we also need to study the $\beta = 0$ part. We have the following result:

\begin{proposition}\label{rig}
Let $\vec\rho=(1,1)^{m-2}(2,1)$.
\begin{enumerate}
\item For $\vec\mu=(m,\Phi(e_j))$, $\vec\nu=(m,\Phi(e_{n+1-j}))$, $1\leq j\leq n$, we have
\begin{eqnarray*}
&&z^{l(\vec\mu)+l(\vec\nu)-1}Z'_{\GW,\beta=0}(\cA_n\times \bP^1)^\circ_{\vec\mu, \vec\nu, \vec\rho}\\
&=&
\frac{\partial}{\partial z}\big(z^{l(\vec\mu)+l(\vec\nu)-2}Z'_{\GW,\beta=0}(\cA_n\times \bP^1)_{\vec\mu, \vec\nu}^{\circ, \sim}   \big)-\sum_{g\geq 1}(-1)^gz^{2g-1}t_1t_2\langle (1,1)|\tau_1[F]|\rangle^{\cA_n\times \bP^1,\circ}_{g,(0,1)}.
\end{eqnarray*}
\item For $l(\vec\mu)+l(\vec\nu)\geq 3$,
$$z^{l(\vec\mu)+l(\vec\nu)-1}Z'_{\GW,\beta=0}(\cA_n\times \bP^1)^\circ_{\vec\mu, \vec\nu, \vec\rho}=
\frac{\partial}{\partial z}\big(z^{l(\vec\mu)+l(\vec\nu)-2}Z'_{\GW,\beta=0}(\cA_n\times \bP^1)_{\vec\mu, \vec\nu}^{\circ, \sim}   \big).$$
\end{enumerate}
\end{proposition}

\begin{proof}
Since $\beta=0$, by virtual localization, we have
\begin{equation}\label{loc1}
\langle\vec{\mu},\vec{\nu}, \vec{\rho}\rangle^{\cA_n\times \bP^1,\circ}_{g,(0,m)}=\sum_{i=1}^{n+1} \left. \langle \mu, \nu,\rho\rangle_{g, m}^{\bC^2\times \bP^1,\circ} \right|_{(t_1, t_2) = (w_i^+, w_i^-)} \cdot  \prod_{j=1}^{l(\vec{\mu})}\gamma_j|_{p_i}\prod_{k=1}^{l(\vec{\nu})}\gamma'_k|_{p_i},
\end{equation}
where $\vec{\mu}=\{(\mu_1,\gamma_1),\cdots, (\mu_{l(\vec{\mu})},\gamma_{l(\vec{\mu})})\}$, $\vec{\nu}=\{(\nu_1,\gamma'_1),\cdots, (\nu_{l(\vec{\nu})},\gamma'_{l(\vec{\nu})})\}\in\cF_{\cA_n}$ and $\mu,\nu,\rho$ are the underlying unweighted partitions of $\vec{\mu},\vec{\nu},\vec\rho$. Similarly, we also have
\begin{equation}\label{loc2}
\langle\vec{\mu},\vec{\nu}\rangle^{\cA_n\times \bP^1,\circ,\sim}_{g,(0,m)}=\sum_{i=1}^{n+1} \left. \langle \mu, \nu\rangle_{g, m}^{\bC^2\times \bP^1,\circ, \sim} \right|_{(t_1, t_2) = (w_i^+, w_i^-)}  \cdot \prod_{j=1}^{l(\vec{\mu})}\gamma_j|_{p_i}\prod_{k=1}^{l(\vec{\nu})}\gamma'_k|_{p_i},
\end{equation}
and
\begin{equation}\label{loc3}
\langle (1,1)|\tau_1[F]|\rangle^{\cA_n\times \bP^1,\circ}_{g,(0,1)}=\sum_{i=1}^{n+1}\langle \left.  (1)|\tau_1[F]|\rangle^{\bC^2\times \bP^1,\circ}_{g,1} \right|_{(t_1, t_2) = (w_i^+, w_i^-)}  .
\end{equation}
So we only need to compare $\langle \mu, \nu,\rho\rangle_{g, m}^{\bC^2\times \bP^1,\circ}$ and $\langle \mu, \nu\rangle_{g, m}^{\bC^2\times \bP^1,\circ, \sim}$. By rigidification, we have
$$
\langle\mu|\tau_1[F]|\nu\rangle^{\bC^2\times \bP^1,\circ}_{g,m}=\langle\mu|\tau_1[F]|\nu\rangle^{\bC^2\times \bP^1,\circ,\sim}_{g,m}=
(2g-2+l(\mu)+l(\nu))\langle \mu, \nu\rangle_{g, m}^{\bC^2\times \bP^1,\circ, \sim}.
$$
On the other hand, $\langle\mu|\tau_1[F]|\nu\rangle^{\bC^2\times \bP^1,\circ}_{g,m}$ can be computed by the degeneration formula. Let the base $\bP^1$ degenerate into two components, such that the two relative marked points lie on one component and the fiber insertion lies on the other. Degeneration formula implies
$$
\langle\mu|\tau_1[F]|\nu\rangle^{\bC^2\times \bP^1,\circ}_{g,m}
=\sum_{\eta,\Gamma_1,\Gamma_2}
\langle \mu,\nu,\eta\rangle^{\bC^2\times \bP^1,\bullet}_{\Gamma_1}z_{\eta}(t_1t_2)^{l(\eta)}
\langle \eta|\tau_1[F] \rangle^{\bC^2\times \bP^1,\bullet}_{\Gamma_2},
$$
where $z_{\eta}=|\Aut(\eta)|\prod_{i=1}^{l(\eta)}\eta_i$, and the summation is over all domain curve configurations $\Gamma_1$, $\Gamma_2$, such that the glued curve over $\Gamma_1$, $\Gamma_2$ is connected.

The second factor, which is a priori an integral over the moduli of relative stable maps with disconnected domains, can be written as that over a product of moduli spaces with connected domains. Each such moduli space is either of the form
$$\Mbar_{g_i, 1}(\bP^1, \eta^i), \qquad \text{or} \qquad \Mbar_{g_i, 0}(\bP^1, \eta^i),  $$
depending on whether the insertion $\tau_1[F]$ is on the particular connected component or not.

The virtual dimensions of $\Mbar_{g_i, 1}(\bP^1, \eta^i)$ and  $\Mbar_{g_i, 0}(\bP^1, \eta^i)$ are respectively
$$
2g_i-1+m_i+l(\eta^i), \qquad 2g_i-2+m_i+l(\eta^i),
$$
where $m_i, \eta^i$ are the corresponding data associated to the component. The rank of the obstruction bundle $V$ over both moduli spaces is given by
$$
\rk(V) = 2g_i.
$$
Counting dimensions, for $\Mbar_{g_i, 1}(\bP^1, \eta^i)$, there are two possibilities. The first one is that $m_i = 2$, $g_i=0$, $l(\eta^i) =1$, and hence $\eta^i = (2)$. The contribution of this component to the invariant $\langle \eta^i | \tau_1 [F] \rangle_{\Gamma_2}^{\bC^2\times \bP^1, \circ}$ is $\frac{1}{2t_1t_2}$. The second one is that $m_i = 1$, $g_i>0$, $l(\eta^i) =1$, and hence $\eta^i = (1)$. For $\Mbar_{g_i, 0}(\bP^1, \eta^i)$, we must have $m_i = l(\eta^i) = 1$, $g_i=0$, and hence $\eta^i = (1)$.

Therefore, for invariants $\langle \eta|\tau_1[F] \rangle^{\bC^2\times \bP^1,\bullet}_{\Gamma_2}$, relative insertions that could appear in the gluing formula are $\eta = (2,0)(1,0)^{m-2}$, or $\eta =(1,0)^{m}$ . In the first case, $\langle \eta|\tau_1[F] \rangle^{\bC^2\times \bP^1,\bullet}_{\Gamma_2}$ is exactly canceled by the gluing factor $z_{\eta}(t_1t_2)^{l(\eta)}$. In the second case, the factor $\langle \mu,\nu,\eta\rangle^{\bC^2\times \bP^1,\bullet}_{\Gamma_1}$ must be an integral over moduli space of genus 0 stable maps and $l(\mu)=l(\nu)=1$ by Proposition \ref{3-pt-10} and this factor is equal to $\frac{1}{m}$. So when $l(\mu)=l(\nu)=1$ we have
\begin{equation}
2g\langle \mu, \nu\rangle_{g, m}^{\bC^2\times \bP^1,\circ, \sim}=\langle\mu|\tau_1[F]|\nu\rangle^{\bC^2\times \bP^1,\circ}_{g,m}=\langle \mu,\nu,(2)(1)^{m-2} \rangle^{\bC^2\times \bP^1,\circ}_{g} +\langle (1)|\tau_1[F]|\rangle^{\bC^2\times \bP^1,\circ}_{g}.
\end{equation}
When $l(\mu)+l(\nu)\geq 3$
\begin{equation}
(2g-2+l(\mu)+l(\nu))\langle \mu, \nu\rangle_{g, m}^{\bC^2\times \bP^1,\circ, \sim}=\langle\mu|\tau_1[F]|\nu\rangle^{\bC^2\times \bP^1,\circ}_{g,m}=\langle \mu,\nu,(2)(1)^{m-2} \rangle^{\bC^2\times \bP^1,\circ}_{g}.
\end{equation}
By \eqref{loc1}, \eqref{loc2}, \eqref{loc3} and by taking the generating functions, we obtain Proposition \ref{rig}.
\end{proof}

Actually, the $\beta = 0$ part can be reduced to invariants on $\bC^2 \times \bP^1$ and computed directly from results in \cite{Bry-Pan}.

\begin{proposition} \label{cotAn}
For $\vec\mu=(m,\Phi(e_j))$, $\vec\nu=(m,\Phi(e_{n+1-j}))$, $1\leq j\leq n$, $\vec\rho=(1,1)^{m-2}(2,1)$, we have
$$
Z'_{\GW,\beta=0}(\cA_n\times \bP^1)^\circ_{\vec\mu, \vec\nu, \vec\rho}=\frac{t_1+t_2}{2(n+1)} \left( m\frac{\cosh \frac{mz}{2}}{\sinh \frac{mz}{2}}-\frac{\cosh \frac{z}{2}}{\sinh \frac{z}{2}} \right)
$$
\end{proposition}

\begin{proof}
Let $\mu,\nu,\rho$ be the underlying unweighted partitions of $\vec{\mu},\vec{\nu},\vec\rho$. By Theorem 6.5 of \cite{Bry-Pan}, we have
$$
Z'_{\GW}(\bC^2\times \bP^1)^\circ_{\mu, \nu, \rho}=\frac{t_1+t_2}{2t_1t_2} \left(m\frac{\cosh \frac{mz}{2}}{\sinh \frac{mz}{2}}-\frac{\cosh \frac{z}{2}}{\sinh \frac{z}{2}} \right).
$$
By \eqref{loc1},
$$
Z'_{\GW,\beta=0}(\cA_n\times \bP^1)^\circ_{\vec\mu, \vec\nu, \vec\rho}=
\left( m\frac{\cosh \frac{mz}{2}}{\sinh \frac{mz}{2}}-\frac{\cosh \frac{z}{2}}{\sinh \frac{z}{2}} \right) \cdot \sum_{i=1}^{n+1}
\frac{w_i^-+ w_i^+}{2w_i^- w_i^+}\Phi(e_j)|_{p_i}\Phi(e_{j'})|_{p_i}.
$$
By the computation in the proof of Proposition \ref{3-pt-10}, we have $\sum_{i=1}^{n+1}
\frac{w_i^-+ w_i^+}{w_i^- w_i^+}\Phi(e_j)|_{p_i}\Phi(e_{j'})|_{p_i}=\frac{t_1+t_2}{n+1}$. Therefore
$$
Z'_{\GW,\beta=0}(\cA_n\times \bP^1)^\circ_{\vec\mu, \vec\nu, \vec\rho}=\frac{t_1+t_2}{2(n+1)} \left(m\frac{\cosh \frac{mz}{2}}{\sinh \frac{mz}{2}}-\frac{\cosh \frac{z}{2}}{\sinh \frac{z}{2}} \right).
$$
\end{proof}

\begin{lemma}\label{add}
$$
\sum_{g\geq 1}(-1)^gz^{2g-1}\langle (1,1)|\tau_1[F]|\rangle^{\cA_n\times \bP^1,\circ}_{g,(0,1)}=\frac{(t_1+t_2)}{2(n+1)t_1t_2}\left(\frac{\cosh \frac{z}{2}}{\sinh \frac{z}{2}}-\frac{2}{z}\right).
$$
\end{lemma}
\begin{proof}
By Lemma \ref{cot}, we have
$$
\sum_{g\geq 1}(-1)^gz^{2g-1}\langle (1)|\tau_1[F]|\rangle^{\bC^2\times \bP^1,\circ}_{g,1}=\frac{(t_1+t_2)}{2t_1t_2}\left(\frac{\cosh \frac{z}{2}}{\sinh \frac{z}{2}}-\frac{2}{z}\right).
$$
By \eqref{loc3},
$$
\sum_{g\geq 1}(-1)^gz^{2g}\langle (1,1)|\tau_1[F]|\rangle^{\cA_n\times \bP^1,\circ}_{g,(0,1)}=\left(\frac{\cosh \frac{z}{2}}{\sinh \frac{z}{2}}-\frac{2}{z}\right)\sum_{i=1}^{n+1}
\frac{w_i^-+ w_i^+}{2w_i^- w_i^+}.
$$
By the computation in the proof of Proposition \ref{3-pt-10}, we know that $\sum_{i=1}^{n+1}
\frac{w_i^-+ w_i^+}{w_i^- w_i^+}=\frac{(t_1+t_2)}{(n+1)t_1t_2}$. Therefore
$$
\sum_{g\geq 1}(-1)^gz^{2g-1}\langle (1,1)|\tau_1[F]|\rangle^{\cA_n\times \bP^1,\circ}_{g,(0,1)}=\frac{(t_1+t_2)}{2(n+1)t_1t_2}\left(\frac{\cosh \frac{z}{2}}{\sinh \frac{z}{2}}-\frac{2}{z}\right).
$$

\end{proof}

\section{Rubber invariants and Crepant Resolution Conjecture}

Recall that in Section \ref{relative}, we have reduced the relative GW invariants $\langle \mubar,\nubar,\rhobar\rangle^{\cX,\circ}_{g,\gamma}$ for
$$
\rhobar=(1,0)^m, \qquad (2,0)(1,0)^{m-2}, \qquad \textrm{or} \qquad (1,k)(1,0)^{m-1},
$$
to rubber invariants $\langle \mubar,\nubar\rangle^{\cX,\circ,\sim}_{g,\gamma}.$ In this section, using the orbifold Grothendieck--Riemann--Roch calculation in \cite{Tse1}, we will compute the rubber invariants $\langle \mubar,\nubar\rangle^{\cX,\circ,\sim}_{g,\gamma}$, under the main assumption
$$(\dagger): \qquad \delta = 0, \qquad \rk(V) = \vdim +1 >0;$$
in particular, $l(\mubar) = l''(\mubar)$, $l(\nubar) = l''(\nubar)$.

\subsection{Double ramification cycle}

The double ramification cycle $\DR_{g,N}$ in $\Mbar_{g,N}$ is defined as the pushforward of the virtual class under the forgetful map
$$
\Mbar_{g,N-l(\mu)-l(\nu)} (\bP^1, \mu, \nu)^\sim \to \Mbar_{g,N},
$$
where $\mu$, $\nu$ are ordinary partitions of $m$. By definition \ref{vc}, given a sufficiently large twisting choice $\fr$, $\DR_{g,N}$ coincides with the pushforward of the virtual class under the forgetful map
$$
\Mbar^\fr_{g,N-l(\mu)-l(\nu)} (\bP^1, \mu, \nu)^\sim \to \Mbar_{g,N}.
$$
In this subsection we would like to relate our relative invariants to $\DR_{g,N}$.

There is a simple relationship between the obstruction theories. Let $\fr$ be a sufficiently large twisting choice. There is a map
$$
\pi^\fr: \overline\cM_{g, \gamma}^\fr (B \bZ_{n+1} \times \bP^1, \mubar, \nubar)^\sim \to \overline\cM_{g,p}^\fr (\bP^1, \mu, \nu)^\sim,
$$
defined by sending a relative stable map $\cC\to B\bZ_{n+1} \times \bP^1[k] (\fr)$ to the relative coarse moduli space of the composition $\cC\to \bP^1[k](\fr)$.

\begin{lemma} \label{pullback}
$$
\left[ \Mbar^\fr_{g, \gamma}(B\bZ_{n+1}\times \bP^1, \mubar, \nubar)^\sim \right]^\vir = (\pi^\fr)^* \left[ \Mbar^\fr_{g,p} (\bP^1, \mu, \nu)^\sim \right]^\vir.
$$
\end{lemma}

\begin{proof}
This is proved in Lemma \ref{pullback-app}.
\end{proof}

Next we consider the following commutative diagram.
\begin{equation} \label{diag}
\xymatrix{
	\overline\cM^\fr_{g, \gamma} (B \bZ_{n+1} \times \bP^1, \mubar, \nubar)^\sim \ar[r]^-{\rhobar} \ar[d]_{\pi^\fr} & \overline\cM_{g, \mubar \sqcup \nubar \sqcup \gamma} (B \bZ_{n+1}) \ar[d]^{\pi} \\
	\overline\cM_{g,p}^\fr (\bP^1, \mu, \nu)^\sim \ar[r]^-\rho  &  \overline\cM_{g, l(\mu) + l(\nu) + p}
}
\end{equation}
All maps here are proper. Moreover, $\pi_2$ here, like $\pi_1$, is also flat, proper, quasi-finite, and of degree $(n+1)^{2g-1}$. The flatness of $\pi_2$ follows from Corollary 3.0.5 of \cite{ACV}, or alternatively, can be shown in the same manner as in Lemma \ref{Cart-1}.

\begin{lemma}
	$$
	\rhobar_* \left[ \Mbar^\fr_{g, \gamma}(B\bZ_{n+1}\times \bP^1, \mubar, \nubar)^\sim \right]^\vir = \pi^* \DR_{g,l(\mu)+l(\nu)+p}.
	$$
\end{lemma}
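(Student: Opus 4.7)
The plan is to exploit the fact that the diagram is Cartesian together with a compatibility between the virtual classes in the two rows. The underlying intuition is that a relative stable map to $B\bZ_{n+1}\times\bP^1$ with prescribed monodromies is the same data as a $\bZ_{n+1}$-bundle on the source curve (with compatible monodromies at markings/nodes) together with a rubber map to $\bP^1$, and the $B\bZ_{n+1}$-direction contributes no obstructions. Hence pushing forward the virtual class from the top-left should reduce, via base change, to the usual double ramification cycle pulled back.

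First I would verify the Cartesian claim at the level of stacks: an object of the fiber product is a twisted curve with a $\bZ_{n+1}$-principal bundle (the $B\bZ_{n+1}$-map) and a rubber map of the coarse curve to $\bP^1$, which is exactly a rubber map to $B\bZ_{n+1}\times\bP^1$ with the prescribed monodromies. I would also record that the vertical maps $\pi$ are finite flat (étale gerbes after imposing the monodromies), since $\Mbar_{g,\mubar\sqcup\nubar\sqcup\gamma}(B\bZ_{n+1})$ parametrizes, over each stable marked curve, only finitely many $\bZ_{n+1}$-torsors with the chosen monodromy profile; this is the standard picture of twisted stable maps to $B\bZ_{n+1}$ in the sense of Abramovich--Vistoli.

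Second, I would establish the virtual class comparison
\[
\left[\Mbar_{g,\gamma}(B\bZ_{n+1}\times\bP^1,\mubar,\nubar)^\sim\right]^\vir \;=\; \pi^*\left[\Mbar_{g,p}(\bP^1,\mu,\nu)^\sim\right]^\vir.
\]
The point is that the perfect obstruction theory of the top-left stack is the pullback of that of the bottom-left one: the relative obstruction theory of the twisted rubber moduli over its non-twisted counterpart is trivial because the deformation theory of a $\bZ_{n+1}$-bundle on a fixed twisted curve is rigid (the tangent--obstruction complex vanishes, up to the automorphism gerbe contributions), so $\pi$ is étale in the derived sense. Equivalently, the map from $B\bZ_{n+1}\times\bP^1$-rubber moduli to $\bP^1$-rubber moduli factors through an étale base change of moduli of twisted curves.

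With these two ingredients in hand, flat base change along the Cartesian square yields $p_*\,\pi^* = \pi^*\,p_*$ at the level of Chow groups, and combining gives
\[
p_*\left[\Mbar_{g,\gamma}(B\bZ_{n+1}\times\bP^1,\mubar,\nubar)^\sim\right]^\vir = \pi^*\,p_*\left[\Mbar_{g,p}(\bP^1,\mu,\nu)^\sim\right]^\vir = \pi^*\DR_{g,l(\mu)+l(\nu)+p},
\]
where the last equality is the very definition of the double ramification cycle. The main obstacle is the virtual class comparison of the second step: one must carefully invoke the compatibility of perfect obstruction theories for twisted versus untwisted rubber stable maps, checking that the $B\bZ_{n+1}$-direction adds no deformations or obstructions beyond the finite gerbe/automorphism data already accounted for by $\pi$. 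This is where I would rely most heavily on the Abramovich--Vistoli--Graber framework, since everything else reduces to formal properties of Cartesian squares and pushforward/pullback of virtual classes.
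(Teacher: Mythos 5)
Your proposal is correct and follows essentially the same route as the paper: reduce to the identity $[\Mbar_{g,\gamma}(B\bZ_{n+1}\times \bP^1,\mubar,\nubar)^\sim]^\vir = \pi^*[\Mbar_{g,p}(\bP^1,\mu,\nu)^\sim]^\vir$ via the claim that the perfect obstruction theory pulls back along $\pi$, then conclude by base change over the Cartesian square. The paper's version of the key step is a bit more explicit about the mechanism --- it works with the Abramovich--Fantechi description of the relative POT in terms of a transversal map $f':C'\to \bP^1[k]'$ and observes that the root constructions at nodes and relative divisors commute with the product by $B\bZ_{n+1}$, so the target cotangent complex literally pulls back --- while you phrase the same fact as rigidity of $\bZ_{n+1}$-torsor deformations and correctly identify it as the point requiring the most care.
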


\begin{proof}
Consider the following diagram,
$$
\xymatrix{
	\overline\cM^\fr_{g, \gamma} (B \bZ_{n+1} \times \bP^1, \mubar, \nubar)^\sim \ar[r]^-{F} \ar[dr]_{\pi^\fr} & \cN \ar[r]^-G \ar[d]^-{H}  & \overline\cM_{g, \mubar \sqcup \nubar \sqcup \gamma} (B \bZ_{n+1}) \ar[d]^{\pi} \\
	& \overline\cM^\fr_{g,p} (\bP^1, \mu, \nu)^\sim \ar[r]^-\rho  &  \overline\cM_{g, l(\mu) + l(\nu) + p} ,
}
$$
where $\cN := \overline\cM^\fr_{g,p} (\bP^1, \mu, \nu)^\sim \times_{ \overline\cM_{g, l(\mu) + l(\nu) + p} }  \overline\cM_{g, \mubar \sqcup \nubar \sqcup \gamma} (B \bZ_{n+1}) $ is the fiber product. By Lemma \ref{pullback} we have
$$
\rhobar_* \left[ \Mbar^\fr_{g, \gamma}(B\bZ_{n+1}\times \bP^1, \mubar, \nubar)^\sim \right]^\vir =  G_* F_* (\pi^\fr)^* \left[ \Mbar^\fr_{g,p} (\bP^1, \mu, \nu)^\sim \right]^\vir
= G_* F_* F^* H^* \left[ \Mbar^\fr_{g,p} (\bP^1, \mu, \nu)^\sim \right]^\vir.
$$
Note that the map $F$ here is flat, proper, quasi-finite, and of degree $1$. We have $F_* F^* = 1$, and therefore obtain from the above
$$
G_* H^* \left[ \Mbar^\fr_{g,p} (\bP^1, \mu, \nu)^\sim \right]^\vir = \pi^* \rho_* \left[ \Mbar^\fr_{g,p} (\bP^1, \mu, \nu)^\sim \right]^\vir = \pi^* \DR_{g,l(\mu)+l(\nu)+p}.
$$
\end{proof}

\subsection{Pixton's formula}

Now the rubber invariant $\langle \mubar,\nubar\rangle^{\cX,\circ,\sim }_{g,\gamma}$ is equal to
$$
\frac{(-1)^{r_1+r_2-1}(t_1+t_2)}{|\Aut(\mu)||\Aut(\nu)|}
\int_{\Mbar_{g, \mubar\sqcup \nubar\sqcup \gamma}(B\bZ_{n+1})}\pi^*\DR \cdot
\lambda^U_{r_1}\lambda^{U^\vee}_{r_2-1} = \frac{1}{|\Aut(\mu)| |\Aut(\nu)|} \int_{\left[ \Mbar_{g, \mubar\sqcup \nubar\sqcup \gamma}([\bC^2/\bZ_{n+1}]) \right]^\vir}\pi^*\DR.
$$
In order to compute the rubber invariant $\langle \mubar,\nubar\rangle^{\cX,\circ,\sim }_{g,\gamma}$, we first need to study the double ramification cycle $\DR$. A combinatorial expression for $\DR$ is obtained in \cite{JPPZ}, known as Pixton's formula. This formula will be used to study $\langle \mubar,\nubar\rangle^{\cX,\circ,\sim }_{g,\gamma}$ in this paper and we now give a brief description of it.

Let $G_{g,N}$ be the set of all genus $g$ stable graphs with $N$ leaves. To each $\Gamma\in G_{g,N}$, we associate the moduli space
$$
\Mbar_{\Gamma}:=\prod_{v\in V(\Gamma)}\Mbar_{g(v),n(v)}.
$$
Then there is a map
$$
\xi_{\Gamma}:\Mbar_{\Gamma}\to \Mbar_{g,N},
$$
whose image is the closure of the boundary stratum associated with $\Gamma$.

Let $r$ be a positive integer and $\Gamma\in G_{g,N}$. Fix a double ramification datum $A=(a_1,\cdots,a_N)$, where $a_i \in \bZ$ and $\sum_{i=1}^N a_i = 0$. A \emph{weighting mod $r$} of $\Gamma$ is a function
$$
w:H(\Gamma)\to\{1,\cdots,r\},
$$
satisfying
\begin{enumerate}
\item For each $h_i\in L(\Gamma)$ corresponding to the marking $i\in\{1,\cdots,n\}$,
    $$
    w(h_i)=a_i \mod r,
    $$

\item For each $e\in E(\Gamma)$ corresponding to two half-edges $ h,h'\in H(\Gamma)$,
    $$
    w(h)+w(h')=0 \mod r,
    $$

\item For each $v\in V(\Gamma)$,
$$
\sum_{h \textrm{ incident to } v}w(h)=0 \mod r.
$$
\end{enumerate}
We denote by $W_{\Gamma, r}$ the set of all weightings mod $r$ of $\Gamma$, and by $P^{d,r}_g(A)$ the degree $d$ component of the tautological class
\begin{eqnarray*}
&&\sum_{\Gamma\in G_{g,N}}\sum_{w\in W_{\Gamma,r}}\frac{1}{|\Aut{\Gamma}|}\frac{1}{r^{h^1(\Gamma)}}
\xi_{\Gamma*} \left(\prod_{i=1}^{n}\exp(a_i^2\psi_i) \prod_{e=(h,h')\in E(\Gamma)}
\frac{1-\exp(-w(h)w(h')(\psi_h+\psi_{h'}))}{\psi_h+\psi_{h'}}\right)
\end{eqnarray*}
in $R^*(\Mbar_{g,n})$.

Pixton shows that for fixed $g, A,$ and $d$, the class $P^{d,r}_g(A)$ is a polynomial in $r$. Denote by $P^{d}_g(A)$ the constant term of $P^{d,r}_g(A)$. The main result of \cite{JPPZ} is the following theorem:

\begin{theorem}[\cite{JPPZ}] \label{double}
For $g\geq 0$ and double ramification data $A$,
$$
\DR_g(A)=2^{-g}P^{d}_g(A)\in R^g(\Mbar_{g,N}).
$$
\end{theorem}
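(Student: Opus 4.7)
The plan is to prove Pixton's formula by realizing both sides as arising from a common computation on the moduli of $r$-th roots, and then extracting the polynomial behavior in $r$. On one hand, Chiodo's formula for the Chern character of the derived pushforward of a universal $r$-th root naturally produces a class with the combinatorial structure of $P^{d,r}_g(A)$. On the other hand, $\DR_g(A)$ can be accessed through virtual localization on moduli of relative stable maps. The bridge between the two is the observation that for each large enough $r$, there is a geometric cycle on a space of $r$-spin curves which, pushed to $\Mbar_{g,N}$, equals $P^{g,r}_g(A)$, and in a suitable $r\to\infty$ limit recovers $\DR_g(A)$ up to the normalization $2^{-g}$.

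The first step is to set up the moduli space $\Mbar^{1/r}_{g,A}$ of twisted stable curves $(C, x_1, \dots, x_N, L)$ equipped with a line bundle $L$ satisfying $L^{\otimes r} \cong \omega_{C,\log}^{\otimes s} \otimes \cO_C(-\sum a_i x_i)$ for a chosen twist $s$. The forgetful map to $\Mbar_{g,N}$ is finite of degree related to $r^{2g}$, with image the pullback of the universal Jacobian section. Apply Chiodo's formula to the universal $r$-th root $\cL$ over the universal curve $\pi: \cC \to \Mbar^{1/r}_{g,A}$: this expresses $\ch(R\pi_* \cL)$ as an explicit sum over boundary strata (indexed by stable graphs $\Gamma$) involving $\psi$-classes at markings and at half-edges. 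A direct graph-theoretic manipulation, together with a combinatorial identification of weightings mod $r$ with the local monodromy data at nodes, shows that the degree-$d$ component of the pushforward from $\Mbar^{1/r}_{g,A}$ matches $P^{d,r}_g(A)$ as defined in the statement, after dividing by appropriate automorphism factors.

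The second step is the polynomiality in $r$. Chiodo's formula produces a class on $\Mbar^{1/r}_{g,A}$ whose pushforward is a priori a rational function in $r$; however, the combinatorial conditions (the three weighting constraints) rigidify the count, and all apparent poles cancel, yielding a true polynomial. One then extracts the constant term $P^d_g(A)$. The remaining task is to identify this constant term with $2^g \DR_g(A)$. Here one uses a virtual localization argument on the moduli of stable maps to $[\bC/\bZ_r] \times \bP^1$ relative to $0, \infty$: as $r \to \infty$, the scalar weight on the fiber decouples, and the fixed loci reduce to the moduli of relative stable maps to rubber $\bP^1$ whose virtual class pushes forward to $\DR_g(A)$. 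Matching the Chern classes of the obstruction bundles on both sides produces the factor $2^{-g}$, which arises from the $r$-torsion normalization at each independent cycle of the dual graph (contributing $h^1(\Gamma)$ factors of $1/r$ that, after Mumford-type identities, leave a net $2^{-g}$ once one compares $\psi_h + \psi_{h'}$ from Chiodo with the node-smoothing contributions from localization).

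The main obstacle is the final identification of the constant term with $2^{-g}\DR_g(A)$, which requires matching two rather different geometric setups: Chiodo's $r$-spin formalism and Li's expanded-degeneration moduli of relative stable maps. Two delicate points enter. First, one must know that the DR cycle defined via the moduli of relative stable maps to rubber $\bP^1$ agrees with its incarnation on the universal Picard, extended across non-compact-type curves — this uses the recent works of Holmes and of Marcus--Wise. Second, one must correctly match the $\psi$-class contributions at nodes: in Chiodo's formula the edge factor is $(1-\exp(-w(h)w(h')(\psi_h+\psi_{h'})))/(\psi_h+\psi_{h'})$, and the cancellations producing polynomiality in $r$ must be arranged compatibly with the localization residues. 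Once this identification is in place, taking $r$ large and comparing the polynomial expressions term-by-term yields $\DR_g(A) = 2^{-g} P^d_g(A)$ in $R^g(\Mbar_{g,N})$.
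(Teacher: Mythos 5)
This theorem is not proved in the paper at all: it is quoted as the main result of \cite{JPPZ}, so there is no internal argument to compare your proposal against. What you have written is, in outline, the strategy of \cite{JPPZ} itself: Chiodo's formula for $\ch(R\pi_*\cL)$ on the moduli of $r$-th roots produces, after pushforward to $\Mbar_{g,N}$, a class with the graph-sum structure of $P^{d,r}_g(A)$; polynomiality in $r$ (for $r\gg 0$) makes the constant term meaningful; and a virtual localization computation on stable maps to an orbifold target relates this to the rubber stable-maps class whose pushforward is $\DR_g(A)$. So the route is the correct one, but as written it is a sketch of the cited proof rather than a proof, and several of the specific assertions would fail as stated.

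Concretely: (1) the forgetful map from the space of $r$-th roots to $\Mbar_{g,N}$ is surjective and quasi-finite of degree $r^{2g-1}$ (as stacks, once the divisibility condition on the twisting data holds); it does not have image ``the pullback of the universal Jacobian section,'' and no such restriction enters. (2) There is no $r\to\infty$ limit in \cite{JPPZ}: the localization is carried out at each fixed sufficiently large $r$, on stable maps to the root stack $\bP^1[r]$ (orbifold point only at $0$) relative to $\infty$, and $\DR_g(A)$ is extracted as the coefficient of a specific power of the equivariant parameter; the rubber moduli over $\infty$ appear in the fixed loci for every $r$. The step ``as $r\to\infty$ the scalar weight decouples'' is not something one can carry out; what replaces it is the vanishing, for $r$ beyond an explicit bound, of the unwanted localization terms, combined with polynomiality in $r$ to identify the constant term in degree $g$. (3) Your heuristic for the factor $2^{-g}$ cannot be right: the $r^{-h^1(\Gamma)}$ factors are already built into the definition of $P^{d,r}_g(A)$, and in \cite{JPPZ} the $2^{-g}$ arises from the normalization discrepancy between the Bernoulli-type coefficients in Chiodo's formula (which naturally produce $a_i^2/2$ and $w(h)w(h')/2$) and the conventions $\exp(a_i^2\psi_i)$, $w(h)w(h')$ used in Pixton's class. (4) The appeal to Holmes and Marcus--Wise is unnecessary here: both in \cite{JPPZ} and in this paper, $\DR_g(A)$ is defined directly as the pushforward of the rubber virtual class, which is exactly the object appearing in the localization, so no comparison with an Abel--Jacobi or universal Picard construction is needed. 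None of these defects is fatal to the strategy---they are precisely the points that \cite{JPPZ} carries out in detail---but they mean your proposal does not yet constitute an independent proof, and for the purposes of this paper a citation is the appropriate justification.
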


\subsection{Vanishing property of the Hurwitz--Hodge classes}

\begin{proposition} \label{vanishing}
The Hurwitz--Hodge class $c_{r_1+r_2-1}(\bE_U\oplus \bE_{U^\vee})$ vanishes on the boundary strata of $\Mbar_{g, \mubar \sqcup  \nubar \sqcup  \gamma}(B\bZ_{n+1})$, except those consisting of irreducible singular nodal curves with nontrivial monodromies at nodes.
\end{proposition}

\begin{proof}
We investigate the behavior of Hurwitz--Hodge classes on the boundary. The normalization of a \emph{reducible} twisted nodal curve $C$ in the boundary has at least two connected components. Let $\nu: \widetilde C \to C$ be a partial normalization such that $\widetilde C = C_1\sqcup C_2$ has two connected components. Let $f$ be the number of normalizing nodes. We have
$$\xymatrix{
0 \ar[r] & \cO_C \ar[r] & \nu_* \cO_{\widetilde C} \ar[r] & \bigoplus_{i=1}^f \cO_{p_i} \ar[r] & 0.
}$$
Tensor it with $U \oplus U^\vee$ and consider the long exact sequence (recall that $U$ is the $\bZ_{n+1}$-representation with weight $1$)
$$\xymatrix@R-2pc{
0 \ar[r] & H^0(C, \cO_C\otimes (U\oplus U^\vee)) \ar[r] & \ar[r] H^0(\widetilde C, \cO_{\widetilde C}\otimes (U\oplus U^\vee)) \ar[r] & \bigoplus_{i=1}^f H^0(C, \cO_{p_i} \otimes (U\oplus U^\vee)) \\
 \ar[r] & H^1(C, \cO_C\otimes (U\oplus U^\vee)) \ar[r] & \ar[r] H^1(\widetilde C, \cO_{\widetilde C}\otimes (U\oplus U^\vee)) \ar[r] & 0,
}$$
where the first two terms always vanish and since $p_i = B\bZ_{n+1}$, we have $H^0(C, \cO_{p_i} \otimes (U\oplus U^\vee)) = (\bC^f \otimes (U\oplus U^\vee))^{\bZ_{n+1}}$. Then the sequence becomes
$$\xymatrix{
0 \ar[r] & (\bC^f \otimes (U\oplus U^\vee))^{\bZ_{n+1}}
 \ar[r] & H^1(C, \cO_C\otimes (U\oplus U^\vee)) \ar[r] & H^1(\widetilde C, \cO_{\widetilde C}\otimes (U\oplus U^\vee)) \ar[r] & 0
}. $$

The curve is in the image of
$$
\iota: \Mbar_{g_1, \mubar^1 \sqcup  \nubar^1 \sqcup \gamma^1 \sqcup  \alpha} (B\bZ_{n+1}) \times \Mbar_{g_2, \mubar^2 \sqcup \nubar^2 \sqcup \gamma^2 \sqcup (-\alpha)} (B\bZ_{n+1}) \to \Mbar_{g, \mubar \sqcup \nubar \sqcup \gamma}(B\bZ_{n+1}),
$$
where $g_1+g_2+ f-1=g$, the union of $(\mubar^i, \nubar^i, \gamma^i)$ on the two pieces $i=1,2$ matches the total datum, and $\pm \alpha = \pm (\alpha_1, \cdots, \alpha_f)$ here stand for $f$ markings on either factors with opposite monodromies.

Denote by $\bV:= \bE_U\oplus \bE_{U^\vee}$, $R:=r_1+r_2$ and $\bV_i, R_i$, $i=1,2$ the corresponding bundles and their ranks on the two factors. We have the sequence
$$\xymatrix{
0 \ar[r] & (\cO^{\oplus f}\otimes (U\oplus U^\vee))^{\bZ_{n+1}} \ar[r] & \iota^*\bV \ar[r] & \bV_1 \boxplus \bV_2 \ar[r] & 0.
}$$
Hence,
$$\iota^* c_{R-1}(\bV) = c_{R-1}(\bV_1 \boxplus \bV_2),$$
and the ranks satisfy
\begin{eqnarray*}
R_1+R_2 &=& (2g_1-2 + l(\mubar^1)+ l(\nubar^1) + p_1 + f'') + (2g_2-2 + l(\mubar^2)+ l(\nubar^2) + p_2 + f'') \\
&=& 2(g-f+1) -4 + l(\mu) + l(\nu) + p + 2f'' \\
&=& R-2(f-f'') \\
&\leq& R,
\end{eqnarray*}
where $f''$ is the number of nodal markings with nontrivial monodromies.

If the inequality is strict, then $R_1+R_2\leq R-2$, and $\iota^* c_{R-1}(\bV)$ simply vanishes by dimensional reasons. If the equality holds, which only happens when all nodal markings have nontrivial monodromies, then
$$\iota^* c_{R-1}(\bV) = c_{R_1}(\bV_1) c_{R_2-1}(\bV_2) + c_{R_1-1}(\bV_1) c_{R_2}(\bV_2),$$
which vanishes since the $\bZ_{n+1}$-Mumford relation implies $c_{R_1}(\bV_1)= c_{R_2}(\bV_2)=0$.

We are left with the case when $C$ is irreducible nodal but with some nodes having trivial monodromies. Similarly one can consider the normalization sequence and the rank inequality would be strict. $\iota^* c_{R-1}(\bV)$ still vanishes by dimensional reasons.
\end{proof}

The vanishing result in Proposition \ref{vanishing} will greatly simplify our computation in the next subsection.

\subsection{Computation of rubber invariants and Crepant Resolution Conjecture}
In this subsection, we compute our rubber invariants $\langle \mubar,\nubar\rangle^{\cX,\circ,\sim }_{g,\gamma}$ and prove the corresponding Crepant Resolution Conjecture.

\subsubsection{Pixton's formula expanded in terms of Bernoulli numbers}\label{pixton}
By Theorem \ref{double} and Proposition \ref{vanishing}, in order to compute the rubber invariant
$$
\langle \mubar,\nubar\rangle^{\cX,\circ,\sim }_{g,\gamma}=\frac{(-1)^{r_1+r_2-1}(t_1+t_2)}{|\Aut(\mu)||\Aut(\nu)|}
\int_{\Mbar_{g, \mubar\sqcup \nubar\sqcup \gamma}(B\bZ_{n+1})}\pi^*\DR \cdot
\lambda^U_{r_1}\lambda^{U^\vee}_{r_2-1},
$$
it suffices to consider the restriction of $\pi^*\DR$ to the main stratum, and to the strata of irreducible singular curves with nontrivial monodromies at nodes. Therefore, in the graph sum expression of $P^{d,r}_g(A)$, we only need to consider those graphs with one vertex and $f$ loops, for $0\leq f\leq g$.

Given $\Gamma\in G_{g,l(\mubar)+l(\nubar)+l(\gamma)}$ with $V(\Gamma)=1$ and $E(\Gamma)=f$, we have
\begin{eqnarray*}
&&\prod_{e=(h,h')\in E(\Gamma)}
\frac{1-\exp(-w(h)w(h')(\psi_h+\psi_{h'}))}{\psi_h+\psi_{h'}}\\
&=&-\prod_{e=(h,h')\in E(\Gamma)} \left( (-w(h)(r-w(h)))+\frac{(-w(h)(r-w(h)))^2}{2}(\psi_h+\psi_{h'})
+\cdots \right. \\
&& \left. +\frac{(-w(h)(r-w(h)))^k}{k!}(\psi_h+\psi_{h'})^{k-1} +\cdots \right),
\end{eqnarray*}
where the series must terminate after finite terms, by dimensional reasons.

Recall that the Bernoulli numbers $B_k$ are defined by the following Taylor expansion:
$$
\frac{z }{e^z-1}=\sum_{k=0}^{\infty}B_k \frac{z^k}{k!}.
$$
The only nonzero odd Bernoulli number is $B_1=-\frac{1}{2}$.
The classical Bernoulli's formula implies
$$
1^k+2^k+\cdots+r^k=\frac{1}{k+1}\sum_{i=0}^{k}\frac{(k+1)!}{i!(k+1-i)!}
B_ir^{k+1-i},
$$
where $B_i$ is the $i$-th Bernoulli number. Since $r^{b_1(\Gamma)}=r^f$, in order to pick the constant term of $P^{d,r}_g(A)$, only the term $\frac{w(h)^{2k}}{k!}$ in the factor $\frac{(-w(h)(r-w(h)))^k}{k!}$ survives, because when we sum over $w(h)\in\{1,\cdots,r\}$ the terms in
\begin{equation}\label{Bernoulli}
\sum_{w(h)=1}^rw(h)^{2k}=1^{2k}+2^{2k}+\cdots+r^{2k}=\frac{1}{2k+1}
\sum_{i=0}^{2k}\frac{(2k+1)!}{i!(2k+1-i)!}
B_ir^{2k+1-i},
\end{equation}
is at least $r$-linear. Hence the product over edges would produce an $r^f$ factor. Furthermore, we can only pick the term $\frac{1}{2k+1}\frac{(2k+1)!}{(2k)!(2k+1-2k)!}
B_{2k}r=B_{2k}r$ in the above summation.

Let $[\cdot]_d$ denote the degree $d$ part of a class. Then the relevant part in Pixton's formula expands as
\begin{eqnarray*}
&& 2^{-g}\sum_{f=0}^{g}\frac{(-1)^f}{2^f f!} \cdot (\xi_{\Gamma_f})_* \left( \sum_{M=f}^{g}
\left[ \exp \left( \sum_{i=1}^{l(\mubar)}\mu_i^2\psi_i+
\sum_{j=1}^{l(\nubar)}\nu_j^2\psi_{j+l(\mubar)} \right) \right]_{g-M} \right. \\
&& \left. \cdot \left( \sum_{m_1+\cdots+m_f=M}
\prod_{i=1}^{f}B_{2m_i} \left( \sum_{k_i=0}^{m_i-1}\frac{\psi_{h_i}^{k_i}}{m_i k_i!}
\frac{\psi_{h_i'}^{m_i-1-k_i}}{(m_i-1-k_i)!} \right) \right) \right) \\
&=&\sum_{f=0}^{g}\frac{(-1)^f}{f!} (\xi_{\Gamma_f})_* \left( \sum_{M=f}^{g} 2^{-M}
\cdot \frac{\left( \sum_{i=1}^{l(\mubar)}\mu_i^2\psi_i+
\sum_{j=1}^{l(\nubar)}\nu_j^2\psi_{j+l(\mubar)} \right)^{g-M}}{2^{g-M}(g-M)!} \right. \\
&& \left. \cdot \left( \sum_{m_1+\cdots+m_f=M}
\prod_{i=1}^{f}B_{2m_i} \left( \sum_{k_i=0}^{m_i-1}\frac{\psi_{h_i}^{k_i}}
{(2m_i) k_i!}
\frac{\psi_{h_i'}^{m_i-1-k_i}}{(m_i-1-k_i)!} \right) \right) \right),
\end{eqnarray*}
where $\Gamma_f$ is the unique graph in $G_{g,l(\mubar)+l(\nubar)+l(\gamma)}$ with $|V(\Gamma)|=1$ and $|E(\Gamma)|=f$. In the second line, the appearance of the Bernoulli numbers is caused by the fact that we have picked the term $\frac{1}{2k+1}\frac{(2k+1)!}{(2k)!(2k+1-2k)!}
B_{2k}r=B_{2k}r$ in Equation \eqref{Bernoulli}.

Let us make the following observations which will be used in the next subsubsection. Consider the following diagram:
\begin{equation}
\xymatrix{
	 & \overline\cM_{g, \mubar \sqcup \nubar \sqcup \gamma} (B \bZ_{n+1}) \ar[d]^{\pi} \\
	\overline\cM_{g-f,l(\mubar)+l(\nubar)+l(\gamma)+2f}  \ar[r]^-\xi  &  \overline\cM_{g, l(\mubar)+l(\nubar)+l(\gamma)}.
}
\end{equation}
Here $\pi$ is the forgetful map which forgets the orbifold structure. The map $\xi$ is given by gluing the last $2f$ marked points in pairs. The image of $\xi$ is the closure of the boundary stratum whose dual graph consists of a single vertex and $f$ loops. The map $\pi$ is a finite map of degree $(n+1)^{2g-1}$. One way to see this is that over a generic point $[C]\in \overline\cM_{g, l(\mubar)+l(\nubar)+l(\gamma)}$, the elements in $\pi^{-1}(C)$ are labeled by the choices of monodromies around the $2g$ nontrivial loops on $C$. These monodromies, together with the monodromies given by $\mubar, \nubar,\gamma$,  uniquely determine a principal $\bZ_{n+1}$-bundle over $C$. This implies that $|\pi^{-1}(C)|=(n+1)^{2g}$. On the other hand, each principal $\bZ_{n+1}$-bundle over $C$ has the automorphism group $\bZ_{n+1}$. Therefore, we have $\deg \pi=(n+1)^{2g-1}$.

Let $\alpha_f=(\alpha_{h_1},-\alpha_{h_1},\cdots,\alpha_{h_f},-\alpha_{h_f})$, with $\alpha_{h_1},\cdots,\alpha_{h_f}$ being \emph{nontrivial} elements in $\bZ_{n+1}$. Then there is a forgetful map $\pi':\Mbar_{g-f,\mubar\sqcup \nubar\sqcup \gamma\sqcup \alpha_f}(B \bZ_{n+1})\to \overline\cM_{g-f,l(\mubar)+l(\nubar)+l(\gamma)+2f}$ which forgets the orbifold structure. A point in $\Mbar_{g-f,\mubar\sqcup \nubar\sqcup \gamma\sqcup \alpha_f}(B \bZ_{n+1})$ is represented by the pair $(\tC,\tP)$, where $\tC$ is a twisted curve of genus $g-f$ and $\tP$ is a principal $\bZ_{n+1}$-bundle over $\tC$ with prescribed monodromies $\mubar\sqcup \nubar\sqcup \gamma\sqcup \alpha_f$. Let $x_1,\cdots,x_{2f}$ be the marked points on $\tC$ corresponding to $\alpha_f$. Let $C$ be the curve given by gluing $x_1,\cdots,x_{2f}$ in pairs i.e. by gluing $x_i,x_{i+1}$ together for $i=1,3,\cdots,2f-1$. In order to obtain a point in $\overline\cM_{g, \mubar \sqcup \nubar \sqcup \gamma} (B \bZ_{n+1})$, we need to construct a principal $\bZ_{n+1}$-bundle $P$ over $C$. The bundle $P$ is obtained by gluing the fibers of $\tP$ over $x_i,x_{i+1}$ for $i=1,3,\cdots,2f-1$. For each $i$, there are $(n+1)$ ways to glue the fibers of $\tP$ over $x_i,x_{i+1}$ together. So there are $(n+1)^f$ such choices in total and we choose an arbitrary bijection between these choices and the set $\{1,\cdots,(n+1)^f\}$. For each $j\in\{1,\cdots,(n+1)^f\}$, let $\phi_j:\Mbar_{g-f,\mubar\sqcup \nubar\sqcup \gamma\sqcup \alpha_f}(B \bZ_{n+1})\to \Mbar_{g, \mubar \sqcup \nubar \sqcup \gamma} (B \bZ_{n+1})$ be defined as $\phi_j(\tC,\tP)=(C,P)$, where $P$ is constructed using the data corresponding to $j$. Let $\Mbar_{\alpha_f}=\coprod_{j=1}^{(n+1)^f}\Mbar_{g-f,\mubar\sqcup \nubar\sqcup \gamma\sqcup \alpha_f}(B \bZ_{n+1})$ be the disjoint union of $(n+1)^f$ copies of $\Mbar_{g-f,\mubar\sqcup \nubar\sqcup \gamma\sqcup \alpha_f}$ and let $\phi_{\alpha_f}=\coprod_{j=1}^{(n+1)^f}\phi_j:\Mbar_{\alpha_f}\to \Mbar_{g, \mubar \sqcup \nubar \sqcup \gamma} (B \bZ_{n+1})$. Let $\Mbar=\coprod_{\alpha_f}\Mbar_{\alpha_f}$ and let $\phi=\coprod_{\alpha_f}\phi_{\alpha_f}:\Mbar\to \Mbar_{g, \mubar \sqcup \nubar \sqcup \gamma} (B \bZ_{n+1})$. Then we have the following Cartesian diagram:

\begin{equation}\label{Cart}
\xymatrix{
	\Mbar \ar[r]^-{\phi} \ar[d]_{\pi'} & \overline\cM_{g, \mubar \sqcup \nubar \sqcup \gamma} (B \bZ_{n+1}) \ar[d]^{\pi} \\
	\overline\cM_{g-f,l(\mubar)+l(\nubar)+l(\gamma)+2f}  \ar[r]^-\xi  &  \overline\cM_{g, l(\mubar)+l(\nubar)+l(\gamma)}.
}
\end{equation}
The Cartesian-ness follows from the above argument on $\tC$ and $C$ in family, and $\Mbar$ is a disjoint union because the gluing data here are discrete, and hence have to be locally constant.

\begin{lemma}\label{factor}
Let $a\in A^*(\overline\cM_{g-f,l(\mubar)+l(\nubar)+l(\gamma)+2f})$. Then we have
$$
\int_{\cM_{g, \mubar \sqcup \nubar \sqcup \gamma} (B \bZ_{n+1})}\pi^*\xi_*(a)\lambda^U_{r_1}\lambda^{U^\vee}_{r_2-1}=\sum_{\alpha_f}(n+1)^f \int_{\Mbar_{g-f,\mubar\sqcup \nubar\sqcup \gamma\sqcup \alpha_f}(B \bZ_{n+1})}(\pi')^*(a)\lambda^U_{r_1}\lambda^{U^\vee}_{r_2-1}
$$
\end{lemma}
\begin{proof}
Notice that since $\alpha_{h_1},\cdots,\alpha_{h_f}$ are \emph{nontrivial} elements in $\bZ_{n+1}$, we have $\phi^*(\bE_U)=\bE_U,
\phi^*(\bE_{U^\vee})=\bE_{U^\vee}$. Here we use the same symbol to denote the Hurwitz--Hodge bundles on $\Mbar$ and on $\Mbar_{g, \mubar \sqcup \nubar \sqcup \gamma} (B \bZ_{n+1})$. By the diagram (\ref{Cart}), we have
$$
\int_{\Mbar_{g, \mubar \sqcup \nubar \sqcup \gamma} (B \bZ_{n+1})}\pi^*\xi_*(a)\lambda^U_{r_1}\lambda^{U^\vee}_{r_2-1}=
\int_{\Mbar_{g, \mubar \sqcup \nubar \sqcup \gamma} (B \bZ_{n+1})}\phi_*(\pi')^*(a)\lambda^U_{r_1}\lambda^{U^\vee}_{r_2-1}.
$$
By projection formula, we have
$$
\int_{\Mbar_{g, \mubar \sqcup \nubar \sqcup \gamma} (B \bZ_{n+1})}\phi_*(\pi')^*(a)\lambda^U_{r_1}\lambda^{U^\vee}_{r_2-1}=
\int_{\Mbar}(\pi')^*(a)\lambda^U_{r_1}\lambda^{U^\vee}_{r_2-1}.
$$
Since $\Mbar =\coprod_{\alpha_f} \Mbar_{\alpha_f}$, $\Mbar_{\alpha_f}=\coprod_{j=1}^{(n+1)^f}\Mbar_{g-f,\mubar\sqcup \nubar\sqcup \gamma\sqcup \alpha_f}(B \bZ_{n+1})$, we have
$$
\int_{\Mbar}(\pi')^*(a)\lambda^U_{r_1}\lambda^{U^\vee}_{r_2-1}=\sum_{\alpha_f}(n+1)^f \int_{\Mbar_{g-f,\mubar\sqcup \nubar\sqcup \gamma\sqcup \alpha_f}(B \bZ_{n+1})}(\pi')^*(a)\lambda^U_{r_1}\lambda^{U^\vee}_{r_2-1}.
$$
\end{proof}

\subsubsection{Grothendieck--Riemann--Roch calculation}
The double ramification cycle has been expressed in terms of $\psi$ classes. Now we compute our rubber invariant $\langle \mubar,\nubar\rangle^{\cX,\circ,\sim }_{g,\gamma}$ using the Grothendieck--Riemann--Roch theorem.

At the end of Section 2 in \cite{Zhou}, J. Zhou obtains the following expression for descendent GW invariants of $[\bC^2/\bZ_{n+1}]$, using Grothendieck--Riemann--Roch:

\begin{eqnarray} \label{jianzhou}
 \left\langle \prod_{j=1}^N \tau_{k_j}(a_j) \right\rangle_g^{[\bC^2/\bZ_{n+1}]}
  &=& \frac{(t_1+t_2) (n+1)^{2g-1}(-1)^{r_1}}{2} \sum_{I\sqcup J= [N]} \frac{B_{r_1+r_2} \left( \frac{c(I)}{n+1} \right)}{r_1+r_2} (-1)^{|J|} \cdot \frac{1}{4^g \prod_{j=1}^N (2k_j+1)!!} \nonumber \\
  && + \delta_{N,2} \frac{(t_1+t_2) (n+1)^{2g-2} (-1)^g}{2} \sum_{c=0}^n \frac{B_{2g}\left(\frac{c}{n+1} \right)}{2g} \frac{1}{4^{g-1} \prod_{k_j>0} (2k_j-1)!!} ,
\end{eqnarray}
where $a_j\in \bZ_{n+1}$, $c(I) = - \sum_{i\in I} a_i$. Moreover, we assume that all $a_j$ occurring here are nonzero, $\sum a_j=0$, and $\sum k_j=g$, which hold in our case.

Applying the formula above, the rubber invariant can be written as
$$
\langle \mubar,\nubar\rangle^{\cX,\circ,\sim }_{g,\gamma} = I_{1,g} + I_{2,g},
$$
where $I_{1,g}$ and $I_{2,g}$ are contributed by the two terms in (\ref{jianzhou}) respectively, and $I_{2,g}$ is only nonzero when $l(\mubar) = l(\nubar) = 1$ and $\gamma = \emptyset$.

Let's compute them separately. $|\Aut(\mu)||\Aut(\nu)| \cdot I_{1,g}$ is the corresponding term in
$$
(-1)^{r_1+r_2-1}(t_1+t_2)
\int_{\Mbar_{g, \mubar\sqcup \nubar\sqcup \gamma}(B\bZ_{n+1})} \pi^*\DR \cdot
\lambda^U_{r_1}\lambda^{U^\vee}_{r_2-1} = \int_{\left[ \Mbar_{g, \mubar\sqcup \nubar\sqcup \gamma}([\bC^2/\bZ_{n+1}]) \right]^\vir} \pi^*\DR ,
$$
which is
\begin{eqnarray*}
&&\sum_{f=0}^{g}\frac{(-1)^f(n+1)^{f}}{f!}\sum_{\alpha_f}
\int_{\left[ \Mbar_{g-f,\mubar\sqcup \nubar\sqcup \gamma\sqcup \alpha_f}([\bC^2/\bZ_{n+1}]) \right]^\vir}
\sum_{M=f}^{g}2^{-M} \cdot \frac{\left( \sum_{i=1}^{l(\mubar)}\mu_i^2\psibar_i+
\sum_{j=1}^{l(\nubar)}\nu_j^2\psibar_{j+l(\mubar)} \right)^{g-M}}{2^{g-M}(g-M)!} \\
&& \cdot \left( \sum_{m_1+\cdots+m_f=M}
\prod_{i=1}^{f}B_{2m_i} \left( \sum_{k_i=0}^{m_i-1}\frac{\psibar_{h_i}^{k_i}}
{(2m_i) k_i!} \frac{\psibar_{h_i'}^{m_i-1-k_i}}{(m_i-1-k_i)!} \right) \right) \\
&=&\sum_{f=0}^{g}\frac{(-1)^f(n+1)^{f}}{f!}\sum_{\alpha_f}
\int_{\left[ \Mbar_{g-f, \mubar\sqcup \nubar\sqcup \gamma\sqcup \alpha_f}([\bC^2/\bZ_{n+1}]) \right]^\vir}
\sum_{M=f}^{g} 2^{-M} \\
&& \cdot \sum_{\sum_{i}k_i+\sum_{j}l_j=g-M}
\prod_{i=1}^{l(\mubar)}\frac{\mu_i^{2k_i}\psibar_i^{k_i}}{2^{k_i}k_i!}
\prod_{j=1}^{l(\nubar)}\frac{\nu_j^{2l_i}
\psibar_{j+l(\mubar)}^{l_j}}{2^{l_j}l_j!} \cdot \left( \sum_{m_1+\cdots+m_f=M}
\prod_{i=1}^{f}B_{2m_i} \left( \sum_{k_i=0}^{m_i-1}\frac{\psibar_{h_i}^{k_i}}
{(2m_i)
k_i!}
\frac{\psibar_{h_i'}^{m_i-1-k_i}}{(m_i-1-k_i)!} \right) \right) \\
&=&\sum_{f=0}^{g}\frac{(-1)^f(n+1)^{f}}{f!} \sum_{\alpha_f} \sum_{M=f}^{g} 2^{-M} \frac{(t_1+t_2) (n+1)^{2g-2f-1}(-1)^{r_1}}{2} \\
&& \cdot \sum_{I\sqcup J= \mubar\sqcup \nubar\sqcup \gamma\sqcup \alpha_f} \frac{B_{r_1+r_2} \left( \frac{c(I)}{n+1} \right)}{r_1+r_2} (-1)^{|J|} \sum_{\sum_{i}k_i+\sum_{j}l_j=g-M} \prod_{i=1}^{l(\mubar)}\frac{(\frac{\mu_i}{2})^{2k_i}}{(2k_i+1)!}
\prod_{j=1}^{l(\nubar)}\frac{(\frac{\nu_i}{2})^{2l_i}
}{(2l_j+1)!} \\
&&\cdot \frac{1}{4^{M-f}} \sum_{m_1+\cdots+m_f=M}
\prod_{i=1}^{f} \left( \sum_{k_i=0}^{m_i-1}\frac{B_{2m_i}}{(2m_i)(2k_i+1)!!k_i!}
\frac{1}{(2(m_i-1-k_i)+1)!!(m_i-1-k_i)!} \right) \\
&=&\sum_{f=0}^{g}\frac{(-1)^f(n+1)^{f}}{f!}\sum_{\alpha_f}
\sum_{M=f}^{g} 2^{-M} \frac{(t_1+t_2) (n+1)^{2g-2f-1}(-1)^{r_1}}{2} \\
&& \cdot \sum_{I\sqcup J= \mubar\sqcup \nubar\sqcup \gamma\sqcup \alpha_f} \frac{B_{r_1+r_2} \left( \frac{c(I)}{n+1} \right)}{r_1+r_2} (-1)^{|J|}  \sum_{\sum_{i}k_i+\sum_{j}l_j=g-M}
 \prod_{i=1}^{l(\mubar)}\frac{(\frac{\mu_i}{2})^{2k_i}}{(2k_i+1)!}
\prod_{j=1}^{l(\nubar)}\frac{(\frac{\nu_i}{2})^{2l_i}
}{(2l_j+1)!} \\
&&\cdot \frac{1}{2^{M-f}} \sum_{m_1+\cdots+m_f=M}
\prod_{i=1}^{f} \left( \sum_{k_i=0}^{m_i-1}\frac{B_{2m_i}}{(2m_i)(2k_i+1)!}
\frac{1}{(2(m_i-1-k_i)+1)!} \right)\\
&=&\sum_{f=0}^{g}\frac{(-1)^f(n+1)^{f}}{f!}\sum_{\alpha_f}
\sum_{M=f}^{g}\frac{(t_1+t_2) (n+1)^{2g-2f-1}(-1)^{r_1}}{2} \cdot \sum_{I\sqcup J= \mubar\sqcup \nubar\sqcup \gamma\sqcup \alpha_f} \frac{B_{r_1+r_2} \left( \frac{c(I)}{n+1} \right)}{r_1+r_2} (-1)^{|J|} \\
&& \cdot [z^{2g-2M}] \left( \prod_{i=1}^{l(\mubar)}\cS(\mu_i z)\prod_{j=1}^{l(\nubar)}\cS(\nu_j z) \right) \cdot \left( \sum_{m_1+\cdots+m_f=M}
\prod_{i=1}^{f}\frac{B_{2m_i}}{(2m_i)(2m_i)!} \right).
\end{eqnarray*}
Here the summation $\sum_{\alpha_f}$ is over all tuples $\alpha_f=(\alpha_{h_1},-\alpha_{h_1},\cdots,\alpha_{h_f},-\alpha_{h_f})$ with $\alpha_{h_1},\cdots,\alpha_{h_f}$ being \emph{nontrivial} elements in $\bZ_{n+1}$, corresponding to monodromies around the nodes; in the last equality we used the fact that $$\sum_{k_i=0}^{m_i-1}\frac{(2m_i)!}{(2k_i+1)!}
\frac{1}{(2(m_i-1-k_i)+1)!}=2^{2m_i-1};$$
the function $\cS(z)$ is defined as
$$\cS(z):= \frac{\sinh(z/2)}{z/2}.$$
The appearance of the factor $(n+1)^{f}$ in the second equality follows from Lemma \ref{factor}.

By the trick in Section 3.1 of \cite{Zhou}, we can rewrite the term involving Bernoulli numbers as follows.
\begin{eqnarray*}
  \sum_{I\sqcup J= [N]} \frac{B_{r_1+r_2} \left( \frac{c(I)}{n+1} \right)}{r_1+r_2} (-1)^{|J|}
  &=& \frac{1}{n+1} \sum_{c=0}^n \sum_{l=0}^n \zeta^{lc}\zeta^{l\sum_{i\in I} a_i} \sum_{I\sqcup J= [N]} \frac{B_{r_1+r_2} \left( \frac{c}{n+1} \right)}{r_1+r_2} (-1)^{|J|} \\
  &=& \frac{1}{n+1} \sum_{c=0}^n \sum_{l=0}^n \zeta^{lc} \frac{B_{r_1+r_2} \left( \frac{c}{n+1} \right)}{r_1+r_2} \sum_{I\sqcup J= [N]} \zeta^{l\sum_{i\in I} a_i}(-1)^{|J|} \\
  &=& \frac{1}{n+1} \sum_{l=1}^n \sum_{c=0}^n \zeta^{lc} \frac{B_{r_1+r_2} \left( \frac{c}{n+1} \right)}{r_1+r_2} \prod_{j=1}^N (\zeta^{l a_j}-1). \\
\end{eqnarray*}
Therefore, let $a$, $b$ be the tuples of markings determined by $\mubar$, $\nubar$ respectively, and we have
\begin{eqnarray*}
I_{1,g} &=& \frac{1}{|\Aut(\mu)||\Aut(\nu)|} \sum_{f=0}^{g}\frac{(-1)^f(n+1)^{f}}{f!}\sum_{\alpha_f}
\sum_{M=f}^{g}\frac{(t_1+t_2) (n+1)^{2g-2f-2}(-1)^{r_1}}{2} \\
&& \cdot \sum_{l=1}^n \sum_{c=0}^n \zeta^{lc} \frac{B_{r_1+r_2} \left( \frac{c}{n+1} \right)}{r_1+r_2} \cdot \prod_{i=1}^{l(\mubar)} (\zeta^{l a_i}-1)\prod_{j=1}^{l(\nubar)} (\zeta^{l b_j}-1)\prod_{k=1}^{l(\gamma)} (\zeta^{l \gamma_k}-1)\prod_{i=1}^{f} (\zeta^{l \alpha_{h_i}}-1)
(\zeta^{-l \alpha_{h_i}}-1)\\
&&\cdot [z^{2g-2M}] \left(\prod_{i=1}^{l(\mubar)}\cS(\mu_i z)\prod_{j=1}^{l(\nubar)}\cS(\nu_j z) \right) \left( \sum_{m_1+\cdots+m_f=M}
\prod_{i=1}^{f}\frac{B_{2m_i}}{(2m_i)(2m_i)!} \right).
\end{eqnarray*}
Notice that since $l\neq 0$, $$\sum_{\alpha_{h_i}=1}^{n}(\zeta^{l \alpha_{h_i}}-1)(\zeta^{-l \alpha_{h_i}}-1)=\sum_{\alpha_{h_i}=1}^{n}(2-\zeta^{l \alpha_{h_i}}-\zeta^{-l \alpha_{h_i}})=2(n+1).$$
Hence, one can eliminate the sum $\sum_{\alpha_f}$ and obtain
\begin{eqnarray*}
I_{1,g} &=& \frac{1}{|\Aut(\mu)||\Aut(\nu)|} \sum_{f=0}^{g}\frac{(-1)^f2^f}{f!}
\sum_{M=f}^{g}\frac{(t_1+t_2) (n+1)^{2g-2}(-1)^{r_1}}{2} \\
&&\cdot\sum_{l=1}^n \sum_{c=0}^n \zeta^{lc} \frac{B_{r_1+r_2} \left( \frac{c}{n+1} \right)}{r_1+r_2} \cdot \prod_{i=1}^{l(\mubar)} (\zeta^{l a_i}-1)\prod_{j=1}^{l(\nubar)} (\zeta^{l b_j}-1)\prod_{k=1}^{l(\gamma)} (\zeta^{l \gamma_k}-1)\\
&&\cdot [z^{2g-2M}] \left( \prod_{i=1}^{l(\mubar)}\cS(\mu_i z)\prod_{j=1}^{l(\nubar)}\cS(\nu_j z) \right) \left( \sum_{m_1+\cdots+m_f=M}
\prod_{i=1}^{f}\frac{B_{2m_i}}{(2m_i)(2m_i)!} \right).
\end{eqnarray*}

Now let's compute $I_{2,g}$. Let $l(\mubar) = l(\nubar) = 1$ and $\gamma = \emptyset$. First we notice that second term of \eqref{jianzhou} only contribute to the case when $f=0$. So the corresponding contribution in $\langle \mubar,\nubar\rangle^{\cX,\circ,\sim }_{g,\emptyset}$ is
$$
\int_{\left[ \Mbar_{g,\mubar\sqcup \nubar }([\bC^2/\bZ_{n+1}]) \right]^\vir}
\sum_{M=0}^{g}2^{-M} \cdot \frac{\left( \mu_1^2\psibar_1+
	\nu_1^2\psibar_2 \right)^{g-M}}{2^{g-M}(g-M)!}.
$$
Moreover, by dimensional constraints, only the $M=0$ term contributes. Therefore
$$
I_{2,g} = \int_{\left[ \Mbar_{g,\mubar\sqcup \nubar}([\bC^2/\bZ_{n+1}]) \right]^\vir}
	 \frac{\left( \mu_1^2\psibar_1+
	 	\nu_1^2\psibar_2 \right)^{g}}{2^{g}g!} =	\int_{\left[ \Mbar_{g,\mubar\sqcup \nubar }([\bC^2/\bZ_{n+1}]) \right]^\vir}2^{-g}m^{2g}\cdot\sum_{k_1+l_1=g}\frac{\psibar_1^{k_1}}{k_1!}\frac{\psibar_2^{l_1}}{l_1!},
$$
where $g\geq 1$. So we have
\begin{eqnarray*}
	I_{2,g}	
	&=&2^{-g}m^{2g}\frac{(t_1+t_2)(n+1)^{2g-2}(-1)^g}{2}\sum_{c=0}^{n}\frac{B_{2g}\left(\frac{c}{n+1}\right)}{2g}\sum_{k_1+l_1=g}\frac{1}{4^{g-1}(2k_1-1)!!k_1!(2l_1-1)!!l_1!}\\
	&=&m^{2g}\frac{(t_1+t_2)(n+1)^{2g-2}(-1)^g}{2}\sum_{c=0}^{n}\frac{B_{2g}\left(\frac{c}{n+1}\right)}{2g}\sum_{k_1+l_1=g}\frac{1}{4^{g-1}(2k_1)!(2l_1)!}\\
		&=&m^{2g}\frac{(t_1+t_2)(n+1)^{2g-2}(-1)^g}{2}\sum_{c=0}^{n}\frac{B_{2g}\left(\frac{c}{n+1}\right)}{2g(2g)!}\frac{1}{4^{g-1}}2^{2g-1}\\
		&=&m^{2g}(t_1+t_2)(n+1)^{2g-2}(-1)^g\sum_{c=0}^{n}\frac{B_{2g}\left(\frac{c}{n+1}\right)}{2g(2g)!} .
	\end{eqnarray*}

\subsubsection{Generating functions}

\begin{definition}
Define the generating function for rubber invariants as
$$
Z(x,z)_{\mubar,\nubar}^{\circ, \sim}:=\sum_{g\geq 0, \gamma}(-1)^gz^{2g}\frac{x_\gamma}{\gamma!}\langle \mubar,\nubar\rangle^{\cX,\circ,\sim }_{g,\gamma},
$$
where $\gamma=(\gamma_1,\cdots,\gamma_p)$ with $0\neq\gamma_i\in\bZ_{n+1}$, $x_\gamma=x_{\gamma_1}\cdots x_{\gamma_p}$, and we use the more intuitive notation $\gamma!$ to denote $|\Aut(\gamma)|$. The factor $\gamma!$ appears because we would like to count those extra marked points as \emph{unordered}. Moreover, the summation is over all rubber invariants satisfying the $(\dagger)$ assumption at the beginning of Section 5.
\end{definition}

In order for the moduli space $\Mbar_{g, a\sqcup b\sqcup \gamma}([\bC^2/\bZ_{n+1}])$ to be nonempty, we must have
$$
\sum_{i=1}^{l(\mubar)}a_i+\sum_{j=1}^{l(\nubar)}b_j
+\sum_{k=1}^{l(\gamma)}\gamma_k=0 \,\ \mod (n+1).
$$

Recall that the Bernoulli polynomials $B_k(t)$, and Bernoulli numbers $B_k$ are defined by the following Taylor expansion:
$$
\frac{z e^{tz}}{e^z-1}=\sum_{k=0}^{\infty}B_k(t) \frac{z^k}{k!}, \qquad B_k:= B_k(0).
$$
The only nonzero odd Bernoulli number is $B_1=-\frac{1}{2}$. Define the generating function
\begin{eqnarray*}
F(z) \ :=\  \sum_{k=1}^{\infty}B_{2k}\frac{z^{2k}}{(2k)!} \ =\  \frac{z}{e^z-1}+\frac{z}{2}-1.
\end{eqnarray*}

First we assume that at least one of $l(\mubar)$ and $l(\nubar)$ is greater or equal to 2. Then the second term of \eqref{jianzhou} does not contribute to $Z(x,z)_{\mubar,\nubar}^{\circ, \sim}$. So with the observations above, the generating function $Z(x,z)_{\mubar,\nubar}^{\circ, \sim}$ can be expressed as
\begin{eqnarray*}
&&|\Aut(\mu)||\Aut(\nu)| \cdot Z(x,z)_{\mubar,\nubar}^{\circ, \sim}\\
&=&\sum_{g\geq 0}\sum_{\gamma}(-1)^gz^{2g}\frac{x^\gamma}{\gamma!}
\cdot \frac{1}{n+1} \sum_{b=0}^{n}\prod_{i=1}^{l(\mubar)}\zeta^{ba_i}
\prod_{j=1}^{l(\nubar)}\zeta^{bb_j}\prod_{k=1}^{l(\gamma)}\zeta^{b\gamma_k}
\sum_{f=0}^{g}\frac{(-1)^f2^f}{f!} \\
&& \cdot \sum_{M=f}^{g}\frac{(t_1+t_2) (n+1)^{2g-2}(-1)^{r_1}}{2} \sum_{l=1}^n \sum_{c=0}^n \zeta^{lc} \frac{B_{r_1+r_2} \left( \frac{c}{n+1} \right)}{r_1+r_2} \prod_{i=1}^{l(\mubar)} (\zeta^{l a_i}-1)\prod_{j=1}^{l(\nubar)} (\zeta^{l b_j}-1)\prod_{k=1}^{l(\gamma)} (\zeta^{l \gamma_k}-1)\\
&&\cdot [z^{2g-2M}] \left( \prod_{i=1}^{l(\mubar)}\cS(\mu_i z)\prod_{j=1}^{l(\nubar)}\cS(\nu_j z) \right) \left( \sum_{m_1+\cdots+m_f=M}
\prod_{i=1}^{f}\frac{B_{2m_i}}{(2m_i)(2m_i)!} \right) \\
&=&-\sum_{g\geq 0}z^{2g}
\sum_{b=0}^{n}\sum_{l=1}^n\prod_{i=1}^{l(\mubar)}\zeta^{ba_i+\frac{a_i}{2}}
\prod_{j=1}^{l(\nubar)}\zeta^{bb_j+\frac{b_j}{2}}
\cdot \sum_{p\geq0}\frac{1}{p!} \left( \sum_{a=1}^n\zeta^{ba+\frac{a}{2}} (\zeta^{l a}-1)x_a \right)^p
\sum_{f=0}^{g}\frac{(-1)^f2^f}{f!}\\
&&\sum_{M=f}^{g}\frac{(t_1+t_2) (n+1)^{2g-3}}{2}
\cdot \sum_{c=0}^n \zeta^{lc} \frac{B_{r_1+r_2} \left( \frac{c}{n+1} \right)}{r_1+r_2} \prod_{i=1}^{l(\mubar)} (\zeta^{l a_i}-1)\prod_{j=1}^{l(\nubar)} (\zeta^{l b_j}-1)\\
&&\cdot [z^{2g-2M}] \left( \prod_{i=1}^{l(\mubar)}\cS(\mu_i z)\prod_{j=1}^{l(\nubar)}\cS(\nu_j z) \right) \cdot [z^{2M}] \left( \int \frac{F(z)}{z} \right)^f,
\\
\end{eqnarray*}
where we've fixed a certain square root $\zeta^{\frac{1}{2}} = e^{\frac{\pi i}{n+1}}$ of $\zeta$, and by $\int \frac{F(z)}{z}$ we always mean the power series obtained by termwise integration with constant term $0$; we also used the formula for the rank $r_1$ given in Section \ref{obs bundle}.

When $l(\mubar)=l(\nubar)=1$, then $I_{2,g}$ also contributes to $Z(x,z)_{\mubar,\nubar}^{\circ, \sim}$. Its contribution is
\begin{eqnarray*}
\sum_{g\geq 1}(-1)^g z^{2g}I_{2,g}&=&\sum_{g\geq 1}m^{2g}(t_1+t_2)(n+1)^{2g-2}\sum_{c=0}^{n}\frac{B_{2g}\left(\frac{c}{n+1}\right)}{2g(2g)!}z^{2g}\\
&=&\frac{(t_1+t_2)}{(n+1)^2}\sum_{g\geq 1}\sum_{c=0}^{n}\frac{B_{2g}\left(\frac{c}{n+1}\right)}{2g(2g)!}\big((n+1)mz\big)^{2g} .
\end{eqnarray*}
Notice that for any formal variable $u$,
$$
\frac{1}{1-e^u}=\frac{\sum_{c=0}^{n}e^{cu}}{1-e^{(n+1)u}}=-\sum_{c=0}^{n}\sum_{k=0}^{\infty}\frac{B_{k}\left(\frac{c}{n+1}\right)}{k!}\big((n+1)u\big)^{k-1}.
$$
Let
$$
H(z)=(n+1)\int\left(\frac{1}{e^z-1}-\frac{1}{z}\right)dz=(n+1)\left(\log\frac{e^z-1}{e^z}-\log z\right),
$$
where we let the integration constant be zero. Then we have
\begin{eqnarray}\label{genI2}
\sum_{g\geq 1}(-1)^g z^{2g}I_{2,g} &=& \frac{(t_1+t_2)}{(n+1)^2}\cdot\frac{H(mz)+H(-mz)}{2} \nonumber \\
&=&\frac{(t_1+t_2)}{2(n+1)}\left(\log (e^{mz}-1)+\log(e^{-mz}-1)-\log (mz)-\log (-mz)\right).
	\end{eqnarray}

We will also need the generating function encoding rubber invariants for all $\mubar$, $\nubar$.

\begin{definition}\label{CV1}
For any $j\in\{1,\cdots,n\}$, we introduce a formal variable $y_j$ and define the following change of variables:
$$
y_j \ =\ \frac{2\sqrt{-1}}{n+1}\sum_{a=1}^n\sin\frac{a\pi}{n+1}\cdot\zeta^{ja}
x_a \ =\  \frac{1}{n+1} \sum_{a=1}^n (\zeta^{a/2}- \zeta^{-a/2}) \zeta^{ja} x_a.
$$
For any $1\leq s\leq t\leq n$, define
$$
y_{s\to t}=y_s+\cdots+y_t.
$$
\end{definition}

By Lemma 3.3 of \cite{Zhou}, we have the following lemma:
\begin{lemma}\label{change1}
For $0\leq b\leq n$, $1\leq l\leq n$, we have
$$
\sum_{a=1}^n\zeta^{ba+\frac{a}{2}} (\zeta^{l a}-1)x_a=\left\{\begin{array}{ll}(n+1)y_{b+1\to b+l}, & \qquad b+l< n+1,\\
-(n+1)y_{b+l-n\to b}, & \qquad b+l\geq n+1,\end{array} \right.
$$
\end{lemma}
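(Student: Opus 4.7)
The plan is to verify this lemma by a direct manipulation of the definition of $y_j$, reducing the weighted partial sum $y_{s\to t}$ to a single closed expression via a geometric series, and then matching exponents on both sides case by case.

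First I would plug the definition of $y_j$ into $y_{s\to t} = y_s + \cdots + y_t$ to get
$$(n+1)\, y_{s\to t} = \sum_{a=1}^n (\zeta^{a/2}-\zeta^{-a/2}) \left( \sum_{j=s}^t \zeta^{ja} \right) x_a.$$
The inner geometric sum evaluates to $(\zeta^{(t+1)a}-\zeta^{sa})/(\zeta^a-1)$, and the crucial simplification is the identity
$$\frac{\zeta^{a/2}-\zeta^{-a/2}}{\zeta^a-1} = \zeta^{-a/2},$$
which comes from factoring $\zeta^a-1 = \zeta^{a/2}(\zeta^{a/2}-\zeta^{-a/2})$. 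This collapses the formula into the clean form
$$(n+1)\, y_{s\to t} = \sum_{a=1}^n \bigl( \zeta^{ta + a/2} - \zeta^{(s-1)a + a/2} \bigr) x_a,$$
using $\zeta^{(t+1)a}\zeta^{-a/2} = \zeta^{ta+a/2}$ and $\zeta^{sa}\zeta^{-a/2} = \zeta^{(s-1)a+a/2}$. Note the symmetric shape: $y_{s\to t}$ only depends on the endpoints $t$ and $s-1$.

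On the other hand, the left hand side of the lemma rewrites as
$$\sum_{a=1}^n \zeta^{ba+a/2}(\zeta^{la}-1)\, x_a = \sum_{a=1}^n \bigl( \zeta^{(b+l)a + a/2} - \zeta^{ba+a/2} \bigr) x_a.$$
Comparing with the formula above, in the case $b+l < n+1$ I choose $s=b+1$, $t=b+l$, and the two sides agree on the nose, giving $(n+1)y_{b+1\to b+l}$. In the case $b+l\geq n+1$, I instead choose $s=b+l-n$, $t=b$, and use $\zeta^{n+1}=1$ to rewrite $\zeta^{(s-1)a+a/2} = \zeta^{(b+l-n-1)a+a/2} = \zeta^{(b+l)a+a/2}$, which flips the sign relative to the left hand side; this yields the $-(n+1)y_{b+l-n\to b}$ branch. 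The constraint $1\leq b+l-n \leq b$ needed for $y_{b+l-n\to b}$ to make sense follows immediately from $1\leq l\leq n$ and $b+l\geq n+1$.

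The computation is essentially bookkeeping, with no real obstacle beyond keeping track of the half-integer exponents $\zeta^{a/2}$ and the wraparound $\zeta^{n+1}=1$. The one point worth emphasizing is the identity $(\zeta^{a/2}-\zeta^{-a/2})/(\zeta^a-1) = \zeta^{-a/2}$, which is the reason the half-integer shifts in the definition of $y_j$ are exactly what is needed to make the geometric sum telescope into a clean exponential.
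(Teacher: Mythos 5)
Your proof is correct, and every step checks out: the geometric series $\sum_{j=s}^t \zeta^{ja} = (\zeta^{(t+1)a}-\zeta^{sa})/(\zeta^a-1)$ is legitimate since $1\leq a\leq n$ guarantees $\zeta^a\neq 1$; the factorization $\zeta^a-1=\zeta^{a/2}(\zeta^{a/2}-\zeta^{-a/2})$ gives the key simplification; and the wraparound $\zeta^{(b+l-n-1)a+a/2}=\zeta^{(b+l)a+a/2}$ only shifts the exponent by the integer $(n+1)a$, so the fixed choice of square root $\zeta^{1/2}$ causes no sign ambiguity. The range checks $1\leq b+1\leq b+l\leq n$ and $1\leq b+l-n\leq b\leq n$ in the two cases are also handled correctly. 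The only point of comparison with the paper is that the paper gives no proof at all: it simply invokes Lemma 3.3 of the cited work of J.~Zhou. Your argument is therefore a self-contained verification of that imported identity, and it is essentially the computation one expects behind the citation (telescoping the geometric sum so that $y_{s\to t}$ depends only on the endpoint exponents $\zeta^{ta+a/2}$ and $\zeta^{(s-1)a+a/2}$, then matching endpoints with or without a sign flip according to whether $b+l$ wraps past $n+1$). Nothing is missing.
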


\begin{definition}\label{CV2}
For any integer $d>0$, $i\in\{1,2\}$ and any $a\in\{1,\cdots,n\}$, we introduce two formal variables $p^i_{d,a}$, $\hat{p}^i_{d,a}$ and define the following change of variables:
$$
\hat{p}^i_{d,j}=\frac{1}{n+1} \sum_{a=1}^n (\zeta^{a/2}- \zeta^{-a/2}) \zeta^{ja} p^i_{d,a}.
$$
For any $1\leq s\leq t\leq n$, define
$$
\hat{p}^i_{d,s\to t}=\hat{p}^i_{d,s}+\cdots+\hat{p}^i_{d,t}.
$$
For any $\bZ_{n+1}$-weighted partition $\mubar=\{(\mu_1,a_1),\cdots,(\mu_{l(\mubar)},a_{l(\mubar)})\}$, define
$$
p^i_{\mubar}=p^i_{\mu_1,a_1}\cdots p^i_{\mu_{l(\mubar)},a_{l(\mubar)}}, \qquad \hat{p}^i_{\mubar}=\hat{p}^i_{\mu_1,a_1}\cdots \hat{p}^i_{\mu_{l(\mubar)},a_{l(\mubar)}}.
$$
Given any ordinary partition $\mu=\{\mu_1,\cdots,\mu_{l(\mu)}\}$, define
$$
\hat{p}^i_{\mu,s\to t}=(\hat{p}^i_{\mu_1,s}+\cdots+\hat{p}^i_{\mu_1,t})\cdots
(\hat{p}^i_{\mu_{l(\mu)},s}+\cdots+\hat{p}^i_{\mu_{l(\mu)},t}).
$$
\end{definition}
Similar to Lemma \ref{change1}, we have the following lemma:

\begin{lemma}\label{change2}
For $d>0$, $i\in\{1,2\}$, $0\leq b\leq n$, $1\leq l\leq n$, we have
$$
\sum_{a=1}^n\zeta^{ba+\frac{a}{2}} (\zeta^{l a}-1)p^i_{d,a}=\left\{\begin{array}{ll}(n+1)\hat{p}^i_{d,b+1\to b+l}, & \qquad b+l< n+1,\\
-(n+1)\hat{p}^i_{d,b+l-n\to b}, & \qquad b+l\geq n+1,\end{array} \right.
$$
\end{lemma}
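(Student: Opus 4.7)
The plan is to observe that the change of variables defining $\hat{p}^i_{d,j}$ in terms of $\{p^i_{d,a}\}_{a=1}^n$ in Definition \ref{CV2} is formally identical to the change of variables defining $y_j$ in terms of $\{x_a\}_{a=1}^n$ in Definition \ref{CV1}, with the superscript $i$ and the subscript $d$ playing the role of fixed parameters. The left-hand side of Lemma \ref{change2} is obtained from the left-hand side of Lemma \ref{change1} by the substitution $x_a \mapsto p^i_{d,a}$, and the right-hand side is likewise obtained from $y_j \mapsto \hat{p}^i_{d,j}$. Hence Lemma \ref{change2} is an immediate consequence of Lemma \ref{change1}.

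For completeness, I would sketch a direct verification of the first case $b+l \leq n$. Substituting the definition of $\hat{p}^i_{d,j}$ yields
$$(n+1)\,\hat{p}^i_{d,b+1\to b+l} = \sum_{a=1}^n (\zeta^{a/2} - \zeta^{-a/2})\, p^i_{d,a} \sum_{j=b+1}^{b+l} \zeta^{ja}.$$
The inner geometric sum evaluates to $\zeta^{(b+1)a}(\zeta^{la}-1)/(\zeta^a - 1)$, and using the factorization $\zeta^{a/2}-\zeta^{-a/2} = \zeta^{-a/2}(\zeta^a-1)$ the denominator cancels, leaving $\zeta^{ba+a/2}(\zeta^{la}-1)$ as the coefficient of $p^i_{d,a}$. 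For the second case $b+l \geq n+1$, the same geometric series manipulation combined with $\zeta^{n+1}=1$ to rewrite $\zeta^{(b+l-n-1)a+a/2} = \zeta^{(b+l)a+a/2}$ produces the opposite sign on the right-hand side.

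There is no serious obstacle: the only bookkeeping required is the index range of the cyclic interval $\{b+1,\dots,b+l\}$ against $\{1,\dots,n\}$, which is precisely what gives rise to the two cases in the statement. In both cases the argument reduces to the identical one-line geometric-series identity used in the proof of Lemma \ref{change1} (stated as Lemma 3.3 of \cite{Zhou}), so Lemma \ref{change2} is essentially a verbatim restatement.
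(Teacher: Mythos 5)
Your proposal is correct and matches the paper's treatment: the paper likewise gives no independent argument for Lemma \ref{change2}, deducing it as the same formal identity as Lemma \ref{change1} (Lemma 3.3 of \cite{Zhou}) under the substitution $x_a \mapsto p^i_{d,a}$, $y_j \mapsto \hat{p}^i_{d,j}$. Your supplementary geometric-series verification, including the sign coming from $\zeta^{(n+1)a}=1$ in the case $b+l \geq n+1$, is accurate.
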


\begin{definition}
Define the generating function $Z(x,z,p^1,p^2)^{\circ, \sim}$ as
$$
Z(x,z,p^1,p^2)^{\circ, \sim}: =\sum_{|\mubar|=|\nubar|=m} Z(x,z)_{\mubar,\nubar}^{\circ, \sim}
p^1_{\mubar}p^2_{\nubar}.
$$
\end{definition}

Applying Lemma \ref{change1} and Lemma \ref{change2} to the generating function $Z(x,z,p^1,p^2)^{\circ,\sim}$, we have
\begin{eqnarray*}
&&Z(x,z,p^1,p^2)^{\circ,\sim}-\sum_{a=1}^{n}\sum_{g\geq 1}(-1)^g z^{2g}I_{2,g}p^1_{m,a}p^2_{m,n+1-a}\\
&=&-\sum_{|\mubar|=|\nubar|=m} \frac{p^1_{\mubar}p^2_{\nubar}}{|\Aut(\mu)||\Aut(\nu)|} \sum_{g\geq 0}z^{2g}
\sum_{b=0}^{n} \sum_{\substack{1\leq l\leq n \\ b+l<n+1}}\prod_{i=1}^{l(\mubar)}\zeta^{ba_i+\frac{a_i}{2}}
\prod_{j=1}^{l(\nubar)}\zeta^{bb_j+\frac{b_j}{2}} \\
&& \sum_{p\geq0}\frac{1}{p!}
((n+1)y_{b+1\to b+l})^p \sum_{f=0}^{g}\frac{(-1)^f2^f}{f!} \sum_{M=f}^{g}\frac{(t_1+t_2) (n+1)^{2g-3}}{2}
\cdot \sum_{c=0}^n \zeta^{lc} \frac{B_{r_1+r_2} \left( \frac{c}{n+1} \right)}{r_1+r_2} \\
&& \cdot \prod_{i=1}^{l(\mubar)} (\zeta^{l a_i}-1)\prod_{j=1}^{l(\nubar)} (\zeta^{l b_j}-1) \cdot [z^{2g-2M}] \left( \prod_{i=1}^{l(\mubar)}\cS(\mu_i z)\prod_{j=1}^{l(\nubar)}\cS(\nu_j z) \right) \cdot [z^{2M}] \left( \int \frac{F(z)}{z} \right)^f
\\
&&-\sum_{|\mubar|=|\nubar|=m} \frac{p^1_{\mubar}p^2_{\nubar}}{|\Aut(\mu)||\Aut(\nu)|} \sum_{g\geq 0}z^{2g}
\sum_{b=0}^{n} \sum_{\substack{1\leq l\leq n \\ b+l\geq n+1}} \prod_{i=1}^{l(\mubar)}\zeta^{ba_i+\frac{a_i}{2}}
\prod_{j=1}^{l(\nubar)}\zeta^{bb_j+\frac{b_j}{2}} \\
&& \cdot \sum_{p\geq0}\frac{1}{p!}
(-(n+1)y_{b+l-n\to b})^p
\sum_{f=0}^{g}\frac{(-1)^f2^f}{f!} \sum_{M=f}^{g}\frac{(t_1+t_2) (n+1)^{2g-3}}{2}
\cdot \sum_{c=0}^n \zeta^{lc} \frac{B_{r_1+r_2} \left( \frac{c}{n+1} \right)}{r_1+r_2} \\
&& \cdot \prod_{i=1}^{l(\mubar)} (\zeta^{l a_i}-1)\prod_{j=1}^{l(\nubar)} (\zeta^{l b_j}-1) \cdot [z^{2g-2M}] \left( \prod_{i=1}^{l(\mubar)}\cS(\mu_i z)\prod_{j=1}^{l(\nubar)}\cS(\nu_j z) \right) \cdot [z^{2M}] \left( \int \frac{F(z)}{z} \right)^f \\
&=&-\sum_{|\mu|=|\nu|=m} \sum_{g\geq 0}z^{2g}
\sum_{b=0}^{n}\sum_{\substack{1\leq l\leq n \\ b+l<n+1}} \frac{(n+1)^{l(\mu)+l(\nu)}
\hat{p}^1_{\mu,b+1\to b+l}\hat{p}^2_{\nu,b+1\to b+l}}{|\Aut(\mu)||\Aut(\nu)|} \\
&& \cdot \sum_{p\geq0}\frac{1}{p!}
((n+1)y_{b+1\to b+l})^p
\sum_{f=0}^{g}\frac{(-1)^f2^f}{f!} \sum_{M=f}^{g}\frac{(t_1+t_2) (n+1)^{2g-3}}{2} \sum_{c=0}^n \zeta^{lc} \frac{B_{r_1+r_2} \left( \frac{c}{n+1} \right)}{r_1+r_2} \\
&&\cdot [z^{2g-2M}] \left( \prod_{i=1}^{l(\mubar)}\cS(\mu_i z)\prod_{j=1}^{l(\nubar)}\cS(\nu_j z) \right) \cdot [z^{2M}] \left( \int \frac{F(z)}{z} \right)^f
\\
&&-\sum_{|\mu|=|\nu|=m}\sum_{g\geq 0}z^{2g}
\sum_{b=0}^{n} \sum_{\substack{1\leq l\leq n \\ b+l\geq n+1}}
\frac{(-(n+1))^{l(\mu)+l(\nu)}
\hat{p}^1_{\mu,b+l-n\to b}\hat{p}^2_{\nu,b+l-n\to b}}{|\Aut(\mu)| |\Aut(\nu)|} \\
&& \cdot \sum_{p\geq0}\frac{1}{p!}
(-(n+1)y_{b+l-n\to b})^p
\sum_{f=0}^{g}\frac{(-1)^f2^f}{f!} \sum_{M=f}^{g}\frac{(t_1+t_2) (n+1)^{2g-3}}{2} \sum_{c=0}^n \zeta^{lc} \frac{B_{r_1+r_2} \left( \frac{c}{n+1} \right)}{r_1+r_2} \\
&&\cdot [z^{2g-2M}] \left( \prod_{i=1}^{l(\mubar)}\cS(\mu_i z)\prod_{j=1}^{l(\nubar)}\cS(\nu_j z) \right) \cdot [z^{2M}] \left( \int \frac{F(z)}{z} \right)^f\\
&=&-2\sum_{|\mu|=|\nu|=m}\sum_{g\geq 0}z^{2g}
\sum_{1\leq s\leq t\leq n}
\frac{\hat{p}^1_{\mu,s\to t}\hat{p}^2_{\nu,s\to t}}{|\Aut(\mu)| |\Aut(\nu)|}
\cdot \sum_{p\geq0}\frac{1}{p!}
(y_{s\to t})^p
\sum_{f=0}^{g}\frac{(-1)^f2^f}{f!}\\
&&\sum_{M=f}^{g}\frac{(t_1+t_2) (n+1)^{2g-3+l(\mu)+l(\nu)+p}}{2}
\cdot \sum_{c=0}^n \zeta^{c(t-s+1)} \frac{B_{r_1+r_2} \left( \frac{c}{n+1} \right)}{r_1+r_2} \\
&&\cdot [z^{2g-2M}] \left( \prod_{i=1}^{l(\mubar)}\cS(\mu_i z)\prod_{j=1}^{l(\nubar)}\cS(\nu_j z) \right) \cdot [z^{2M}] \left( \int \frac{F(z)}{z} \right)^f,
\end{eqnarray*}
where in the last equality, we have used the fact
$$
B_m(1-x)=(-1)^mB_m(x).
$$

\subsection{Crepant resolution for rubber invariants}

In this subsection, we compare the computations above with the results (Proposition 3.6 and 4.3) in \cite{Mau}. Recall the generating functions for the relative GW theory of $\cA_n\times \bP^1$ and the rubber theory are defined in \cite{Mau} as
\begin{eqnarray*}
Z'_\GW(\cA_n\times \bP^1)_{\vec\mu, \vec\nu}^{\circ, \sim} &:=& \sum_{g, \beta} z^{2g} s_1^{\beta\cdot\omega_1} \cdots s_n^{\beta\cdot\omega_n} \langle \vec\mu, \vec\nu \rangle_{g, (\beta, m)}^{\circ, \sim}, \\
Z'_\GW(\cA_n\times \bP^1)_{\vec\mu^1, \cdots, \vec\mu^r} &:=& \sum_{\chi, \beta} z^{-\chi} s_1^{\beta\cdot \omega_1} \cdots s_n^{\beta\cdot \omega_n} \langle \vec\mu^1, \cdots, \vec\mu^r \rangle^{\cA_n\times \bP^1, \bullet}_{\chi, (\beta, m)},
\end{eqnarray*}
where $\vec\mu^i\in \cF_{\cA_n}$.

Recall the explicit isomorphism between the two Fock spaces. If we identify $\bZ_{n+1}$ with the orbifold cohomology $H^*_\orb([\bC^2/\bZ_{n+1}])$, the isomorphism is given by
$$\Phi: H^*_\orb([\bC^2/\bZ_{n+1}]) \cong H^*(\cA_n),$$
\begin{equation*}
e_0 \mapsto 1, \qquad e_i \mapsto \frac{\zeta^{i/2} - \zeta^{-i/2}}{n+1} \sum_{j=1}^n \zeta^{ij} \omega_j, \qquad 1\leq i\leq n,
\end{equation*}
where $\omega_1, \cdots, \omega_n \in H^2(\cA_n, \bQ)$ is the dual basis to the exceptional curves in $\cA_n$.

\begin{lemma}\label{I2}
	Let $\mubar=(m,k),\nubar=(m,-k),k\neq 0$ and let $\vec\mu, \vec\nu\in \cF_{\cA_n}$ be their correspondents under the isomorphism above. Then
	$$
	\sum_{g\geq 1}(-1)^g z^{2g}I_{2,g}=Z'_{\GW,\beta=0}(\cA_n\times \bP^1)^{\circ, \sim}_{\vec\mu, \vec\nu}
	$$
\end{lemma}
\begin{proof}
This follows from Equation \eqref{genI2} and Proposition \ref{Anrubber}.
\end{proof}

\begin{theorem}[Rubber GW crepant resolution] \label{thm:rubber}
Given $\mubar, \nubar\in \cF_{[\bC^2/\bZ_{n+1}]}$, let $\vec\mu, \vec\nu\in \cF_{\cA_n}$ be their correspondents under the isomorphism above. Then under the change of variables
  $$s_j = \zeta \exp \left( \frac{1}{n+1} \sum_{a=1}^n (\zeta^{a/2}- \zeta^{-a/2}) \zeta^{ja} x_a \right), \qquad 1\leq j\leq n.$$
we have:
\begin{enumerate}[1)]
\item When $l(\mubar)+l(\nubar)=2$,
  $$Z'_\GW(\cA_n\times \bP^1)^{\circ, \sim}_{\vec\mu, \vec\nu}=Z'_\GW([\bC^2/\bZ_{n+1}]\times \bP^1)^{\circ, \sim}_{\mubar, \nubar};$$
\item When $l(\mubar)+l(\nubar)\geq 3$,
  $$Z'_{\GW,\beta\neq 0}(\cA_n\times \bP^1)^{\circ, \sim}_{\vec\mu, \vec\nu}=Z'_\GW([\bC^2/\bZ_{n+1}]\times \bP^1)^{\circ, \sim}_{\mubar, \nubar}.$$
 \end{enumerate}
\end{theorem}

\begin{proof}
We make the following observation. For any $1\leq l\leq n$ and any formal variable $u$,
$$
1+\sum_{d=1}^\infty(\zeta^le^u)^d=\frac{1}{1-\zeta^le^u}=
-\frac{\sum_{c=0}^n\zeta^{cl}e^{cu}}{e^{(n+1)u}-1}=-\sum_{c=0}^n\zeta^{cl}
\sum_{k=1}^\infty\frac{B_k(\frac{c}{n+1})}{k!}((n+1)u)^{k-1}.
$$
Taking derivatives on both sides for $2g-3+l(\mu)+l(\nu)$ times,
$$
\sum_{d=1}^\infty d^{2g-3+l(\mu)+l(\nu)}(\zeta^le^u)^d=
-\sum_{c=0}^n\zeta^{cl}\sum_{p=0}^\infty \frac{B_{r_1+r_2} \left( \frac{c}{n+1} \right)}{(r_1+r_2)p!}(n+1)^{2g-3+l(\mu)+l(\nu)+p}u^p,
$$
where we used the fact $r_1+r_2=2g-2+l(\mu)+l(\nu)+p$. Substitute this identity into the generating function,
\begin{eqnarray*}
&&Z(x,z,p^1,p^2)^{\circ, \sim}-\sum_{a=1}^{n}\sum_{g\geq 1}(-1)^g z^{2g}I_{2,g}p^1_{m,a}p^2_{m,n+1-a} \\
&=&(t_1+t_2)\sum_{|\mu|=|\nu|=m}\sum_{g\geq 0}z^{2g}
\sum_{1\leq s\leq t\leq n}
\frac{\hat{p}^1_{\mu,s\to t}\hat{p}^2_{\nu,s\to t}}{|\Aut(\mu)| |\Aut(\nu)|} \sum_{d=1}^\infty
d^{2g-3+l(\mu)+l(\nu)}(\zeta^{t-s+1}\exp(y_{s\to t}))^d\\
&&\cdot\sum_{f=0}^{g}\frac{(-1)^f2^f}{f!}
\sum_{M=f}^{g} [z^{2g-2M}] \left( \prod_{i=1}^{l(\mubar)}\cS(\mu_i z)\prod_{j=1}^{l(\nubar)}\cS(\nu_j z) \right) \cdot [z^{2M}] \left( \int \frac{F(z)}{z} \right)^f.
\end{eqnarray*}

A key observation here is that the power series $\int \frac{F(z)}{z}$ starts from the quadratic term (we let the integration constant be zero). Thus we can rewrite the sum $\sum_{M=f}^{g}$ as $\sum_{M=0}^{g}$ and the result does not change. Therefore,
\begin{eqnarray*}
&&Z(x,z,p^1,p^2)^{\circ, \sim}-\sum_{a=1}^{n}\sum_{g\geq 1}(-1)^g z^{2g}I_{2,g}p^1_{m,a}p^2_{m,n+1-a} \\
&=&(t_1+t_2)\sum_{|\mu|=|\nu|=m}
\sum_{1\leq s\leq t\leq n}
\frac{\hat{p}^1_{\mu,s\to t}\hat{p}^2_{\nu,s\to t}}{|\Aut(\mu)| |\Aut(\nu)|} \sum_{d=1}^\infty
d^{-3+l(\mu)+l(\nu)} \\
&& \cdot (\zeta^{t-s+1}\exp(y_{s\to t}))^d \cdot \prod_{i=1}^{l(\mu)}\cS(d\mu_i z) \prod_{j=1}^{l(\nu)}\cS(d\nu_j z) \cdot \exp\left( -2\int \frac{F(dz)}{dz} \right)
\end{eqnarray*}
By Lemma \ref{exp} below, we obtain the following formula:
\begin{eqnarray*}
&&Z(x,z,p^1,p^2)^{\circ, \sim}-\sum_{a=1}^{n}\sum_{g\geq 1}(-1)^g z^{2g}I_{2,g}p^1_{m,a}p^2_{m,n+1-a} \\
&=& \frac{(t_1+t_2)}{|\Aut(\mu)| |\Aut(\nu)|} \sum_{|\mu|=|\nu|=m}
\sum_{1\leq s\leq t\leq n}
\hat{p}^1_{\mu,s\to t}\hat{p}^2_{\nu,s\to t}\\
&& \sum_{d=1}^\infty \frac{d^{l(\mu)+l(\nu)-3}(\zeta^{t-s+1}\exp(y_{s\to t}))^d
\cdot(\prod_{i=1}^{l(\mu)}\cS(d\mu_i z)\prod_{j=1}^{l(\nu)}\cS(d\nu_j z))}{\cS(dz)^2}
\end{eqnarray*}
This coincides with the formula Proposition 3.6 in \cite{Mau}, where the parameters $s_j$ are related to $y_j$ by
$$s_j = \zeta e^{y_j}, \qquad 1\leq j\leq n.$$
Then Theorem \ref{thm:rubber} follows from Lemma \ref{I2}.
\end{proof}

The following lemma computes the exponential term.
\begin{lemma} \label{exp}
  $$\exp\left( \int \frac{F(z)}{z} \right) = \cS(z).$$
\end{lemma}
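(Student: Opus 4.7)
The plan is to verify the identity by differentiating the logarithm of both sides. Both the formal series $\int F(z)/z \, dz$ (with zero constant of integration, as per the convention stated earlier) and $\log \cS(z)$ vanish at $z = 0$: indeed $F(z)/z$ starts with a term in $z$ (since $F(z) = \sum_{k\geq 1} B_{2k} z^{2k}/(2k)!$), so its termwise antiderivative begins at $z^2$, while $\cS(z) = 1 + z^2/24 + O(z^4)$ gives $\log \cS(z) = z^2/24 + O(z^4)$. Hence it suffices to check that the derivatives agree.

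First I would compute $\tfrac{d}{dz}\log \cS(z)$ directly. Writing $\cS(z) = 2\sinh(z/2)/z$, so that $\log \cS(z) = \log(2\sinh(z/2)) - \log z$, yields
\[
  \frac{d}{dz}\log \cS(z) \;=\; \tfrac{1}{2}\coth(z/2) \,-\, \frac{1}{z}.
\]
Next, I would rewrite $\tfrac12\coth(z/2)$ in the form appearing in $F(z)/z$ using the elementary identity
\[
  \tfrac{1}{2}\coth(z/2) \;=\; \frac{1}{2}\cdot\frac{e^{z/2}+e^{-z/2}}{e^{z/2}-e^{-z/2}} \;=\; \frac{1}{2}\cdot\frac{e^z+1}{e^z-1} \;=\; \frac{1}{2} \,+\, \frac{1}{e^z-1}.
\]

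Finally, combining these two displays gives
\[
  \frac{d}{dz}\log \cS(z) \;=\; \frac{1}{e^z-1} \,+\, \frac{1}{2} \,-\, \frac{1}{z} \;=\; \frac{1}{z}\left( \frac{z}{e^z-1} + \frac{z}{2} - 1 \right) \;=\; \frac{F(z)}{z},
\]
which is exactly the integrand. Integrating (with vanishing constant of integration, which is consistent with the observation above) and exponentiating yields $\exp\bigl(\int F(z)/z\,dz\bigr) = \cS(z)$, as desired. There is no real obstacle here; the identity is a clean consequence of the generating-function definition of the Bernoulli numbers together with the standard expression $\coth(z/2) = 1 + 2/(e^z-1)$.
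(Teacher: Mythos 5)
Your proof is correct and follows essentially the same route as the paper: both sides equal $1$ at $z=0$, so it suffices to match the derivatives of the logarithms, which you verify explicitly via $\frac{d}{dz}\log\cS(z)=\tfrac12\coth(z/2)-\tfrac1z=\frac{F(z)}{z}$. The paper leaves this derivative check as "clear," while you carry it out in full; otherwise the arguments coincide.
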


\begin{proof}
  Both sides take the value $1$ at $z=0$. Thus it makes sense to take logarithms and it suffices to prove
  $$\int \frac{F(z)}{z} = \log \cS(z),$$
  which is clear by checking the derivatives of both sides match with each other.
\end{proof}

\subsection{Crepant Resolution Conjecture for $3$-point functions}

Recall that we have the following rigidification results from Section 3, in the case $\rk(V) = \vdim +1 >0$.
$$\langle \mubar,\nubar,(2,0)(1,0)^{m-2} \rangle^{\cX,\circ}_{g,\gamma} + \sum_{k=1}^n \langle \mubar,\nubar,(1,k)(1,0)^{m-1} \rangle^{\cX,\circ}_{g,\gamma \backslash (k)} = (2g-2+p+l(\mubar)+l(\nubar))\langle \mubar,\nubar\rangle^{\cX,\circ,\sim}_{g,\gamma}.$$
$$\langle \mubar,\nubar,(1,k)(1,0)^{m-1} \rangle^{\cX,\circ}_{g,\gamma} = \langle \mubar, \nubar \rangle^{\cX, \circ, \sim}_{g, \gamma\sqcup (k)}.$$

We can obtain the following result for the \emph{disconnected} theory, by the same argument as in Proposition 4.4 of \cite{Mau}. The partition function for $[\bC^2/\bZ_{n+1}]\times \bP^1$ is defined as
$$Z'_\GW([\bC^2/\bZ_{n+1}]\times \bP^1)_{\mubar^1, \cdots, \mubar^r} := \sum_{\chi, \gamma} z^{-\chi} \frac{x^\gamma}{\gamma !} \langle \mubar^1, \cdots, \mubar^r \rangle^{[\bC^2/\bZ_{n+1}]\times \bP^1, \bullet}_{\chi, \gamma}.$$

\begin{lemma}\label{3ptsdeg0}
	Let $\mubar=(m,k),\nubar=(m,-k),k\neq 0, \rhobar=(2,0)(1,0)^{m-2}$ and let $\vec\mu, \vec\nu, \vec\rho\in \cF_{\cA_n}$ be their correspondents under the isomorphism above. Then
	$$
	Z'_{\GW,\beta=0}(\cA_n\times \bP^1)^\circ_{\vec\mu, \vec\nu, \vec\rho} =z^{-1}\frac{\partial}{\partial z}\left(	\sum_{g\geq 1}(-1)^g z^{2g}I_{2,g}   \right)-z^{-1}\sum_{g\geq 1}(-1)^g z^{2g-1}t_1t_2\langle (1,0)|\tau_1[F]|\rangle^{\cX,\circ}_{g,\emptyset}
	$$
\end{lemma}
\begin{proof}
This follows from Proposition \ref{cotAn}, Lemma \ref{cot}, and \eqref{genI2}.
\end{proof}

\begin{theorem}[GW crepant resolution] \label{GW-CRC}
Given $\mubar, \nubar, \rhobar\in \cF_{[\bC^2/\bZ_{n+1}]}$, with
$$\rhobar = (1,0)^m, \qquad (2,0)(1,0)^{m-2}, \qquad \textrm{or} \qquad (1,k)(1,0)^{m-1},$$
let $\vec\mu, \vec\nu, \vec\rho\in \cF_{\cA_n}$ be their correspondents. Then under the change of variables
  $$s_j = \zeta \exp \left( \frac{1}{n+1} \sum_{a=1}^n (\zeta^{a/2}- \zeta^{-a/2}) \zeta^{ja} x_a \right), \qquad 1\leq j\leq n,$$
we have:
\begin{enumerate}[1)]
\item When $l(\mubar)+ l(\nubar)=2$, and $\rhobar = (1,0)^m$ or $(2,0)(1,0)^{m-2}$,
$$Z'_\GW(\cA_n\times \bP^1)_{\vec\mu, \vec\nu, \vec\rho} = Z'_\GW([\bC^2/\bZ_{n+1}]\times \bP^1)_{\mubar, \nubar, \rhobar};$$
\item When $l(\mubar)+ l(\nubar)\geq 3$, or $\rhobar =(1,k)(1,0)^{m-1}$,
$$Z'_{\GW,\beta\neq 0}(\cA_n\times \bP^1)_{\vec\mu, \vec\nu, \vec\rho} = Z'_\GW([\bC^2/\bZ_{n+1}]\times \bP^1)_{\mubar, \nubar, \rhobar}.$$
\end{enumerate}
\end{theorem}

\begin{proof}
The case $\rhobar = (1,0)^m$ is a direct corollary of results in Section \ref{1,0} and \ref{1,1}. We now concentrate on $\rhobar = (2,0)(1,0)^{m-2}$ or $(1,k)(1,0)^{m-1}$.

Recall that in Section 3 we have classified all invariants in cases: when $\delta = 1$, invariants reduce to the smooth case \cite{Bry-Pan} and can be matched directly; when $\delta = 0$ and $\rk(V) > 0$, which we call the $(\dagger)$ condition,  invariants are all linear in $(t_1 + t_2)$; when $\delta=0$ and $\rk(V) = \vdim = 0$, invariants are constant in $(t_1 + t_2)$. We try to match the latter two parts separately. The rigidification results can be rewritten as equations
$$Z'_\GW([\bC^2/\bZ_{n+1}]\times \bP^1)_{\mubar, \nubar, (1,k)(1,0)^{m-1}}^\circ = z^{-2} \frac{\partial}{\partial x_k} Z'_\GW([\bC^2/\bZ_{n+1}]\times \bP^1)^{\circ, \sim}_{\mubar, \nubar},$$
and for $l(\mubar)+ l(\nubar)=2$
\begin{eqnarray*}
&&z^{l(\mu) +l(\nu)-1}Z'_\GW([\bC^2/\bZ_{n+1}]\times \bP^1)_{\mubar, \nubar, (2,0)(1,0)^{m-2}}^\circ \\
&=&
\frac{\partial}{\partial z}
\left( z^{l(\mu) + l(\nu) -2} Z'_\GW([\bC^2/\bZ_{n+1}]\times \bP^1)^{\circ, \sim}_{\mubar, \nubar} \right)-\sum_{g\geq 1}(-1)^g z^{2g-1}t_1t_2\langle (1,0)|\tau_1[F]|\rangle^{\cX,\circ}_{g,\emptyset},
\end{eqnarray*}
and for $l(\mubar)+ l(\nubar)\geq 3$
$$
z^{l(\mu) +l(\nu)-1}Z'_\GW([\bC^2/\bZ_{n+1}]\times \bP^1)_{\mubar, \nubar, (2,0)(1,0)^{m-2}}^\circ=\frac{\partial}{\partial z}
\left( z^{l(\mu) + l(\nu) -2} Z'_\GW([\bC^2/\bZ_{n+1}]\times \bP^1)^{\circ, \sim}_{\mubar, \nubar} \right).
$$
and the disconnected version is also true. On the other hand, by the change of variables
$$\frac{\partial}{\partial x_k} = \frac{\zeta^{k/2} - \zeta^{-k/2}}{n+1} \sum_{j=1}^n \zeta^{jk} s_k \frac{\partial}{\partial s_k}.$$
Compared with Proposition 4.4 of \cite{Mau}, we conclude by Theorem \ref{thm:rubber} and by Lemma \ref{3ptsdeg0} that the $(t_1 + t_2)$-linear terms of $Z'_\GW(\cA_n\times \bP^1)_{\vec\mu, \vec\nu, \vec\rho}$ and $Z'_\GW([\bC^2/\bZ_{n+1}] \times \bP^1)_{\mubar, \nubar, \rhobar}$ coincide.

It suffices to match the constant terms, which are contributed by invariants satisfying $\vdim = \rk (V) = 0$. As discussed in Part b) of Section \ref{rubber-section}, the followings are the only three possibilities for this to give nontrivial connected invariants: $\mubar = (m_1, 0)(m_2, k)$, $\nubar = (m, -k)$; $\mubar = (m_1, k)(m_2, -k)$, $\nubar = (m,0)$; $\mubar = (m,0)$, $\nubar = (m,-k)$. Here $m_1 + m_2 = m$, $k\neq 0$, and $\mubar$, $\nubar$ could be switched, and $\rhobar = (2,0)(1,0)^{m-2}$ for the first two and $\rhobar = (1,k)(1,0)^{m-1}$ for the third possibility. In all these cases we have $g=0$ and
$$\langle \mubar, \nubar, \rhobar \rangle_{0, \gamma = \emptyset}^{\cX, \circ} = \langle \mu, \nu, \rho \rangle_0^{\bP^1, \circ} \cdot (e_k, e_{-k})_{[\bC^2 / \bZ_{n+1}]} ,$$
where $\langle \ \rangle^{\bP^1}$ is the relative GW invariant for $\bP^1$, and $(-,-)_{[\bC^2 / \bZ_{n+1}]}$ is the orbifold Poincar\'e pairing.

On the other hand, consider the corresponding $\vec\mu$, $\vec\nu$ for those cases, and the theory $\langle \vec\mu, \vec\nu, \vec\rho \rangle_{g, \beta}^{\cA_n\times \bP^1, \circ}$, where $\beta \in H_2(\cA_n, \bZ)$. When $\beta\neq 0$, Proposition 4.3 of \cite{Mau} implies that these invariants always vanish. When $\beta = 0$, it is easy to see, by dimensional constraints, that we must have $g=0$. Then we have
$$\langle \vec\mu, \vec\nu, \vec\rho \rangle_{0, \beta=0}^{\cA_n\times \bP^1, \circ} = \langle \mu, \nu, \rho \rangle_0^{\bP^1, \circ} \cdot (\Phi(e_k), \Phi(e_{-k}) )_{\cA_n},$$
where $\Phi$ is the isomorphism (\ref{correspondence}), and $(-, -)_{\cA_n}$ is the Poincar\'e pairing on $\cA_n$. Finally, one can observe that the isomorphism $\Phi$ actually preserves the Poincar\'e pairing, and therefore 3-point functions in the three exceptional cases also match.
\end{proof}

\begin{remark}
The statements of the GW crepant resolution in Theorem \ref{GW-CRC} fall into two cases 1) and 2), where in Case 2), the $\beta = 0$ part of the partition function for $\cA_n \times \bP^1$ does not match with any GW invariants of $[\bC^1 /\bZ_{n+1}] \times \bP^1$. The reason for this discrepancy is the following.

First, by dimension counting, one can see that $Z'_{GW, \beta = 0} (\cA_n \times \bP^1)_{\vec\mu, \vec\nu, \vec\rho}$ is only possibly nonzero for the $g=0$ terms. Similar analysis as above shows that the invariant $\langle \vec\mu, \vec\nu, \vec\rho \rangle_{0, \beta=0}^{\cA_n\times \bP^1, \circ}$ reduces to
$$
\langle \mu, \nu, \rho \rangle_0^{\bP^1, \circ} \cdot \langle \Phi (e_{i_1}), \cdots, \Phi(e_{l(\mu) + l(\nu) + l(\rho)}) \rangle^{\cA_n},
$$
where the $\Phi (e_{i_l}) \in H^*(\cA_n)$ are all cohomology classes carried by $\vec\mu$, $\vec\nu$, $\vec\rho$, and $\langle - \rangle^{\cA_n}$ is the classical $n$-point function on $\cA_n$. This $n$-point function is not zero in general in Case 2).
\end{remark}

\subsection{Proof of Lemma \ref{cot}} \label{pflemma}

In this subsection, we prove Lemma \ref{cot}. Recall that $\langle (1,0)|\tau_1[F]|\rangle^{\cX,\circ}_{g,\emptyset}$ is an integral over the moduli space $\Mbar_{g, (\one)}(\cY, (1,0))$. The integral is nonzero only if $\delta=1$ which means that the monodromies over the $2g$ nontrivial loops on the domain curve are all trivial. Let $\pi:\Mbar_{g, (\one)}(\cY, (1,0))\to \Mbar_{g, 1}(\bP^1, (1))$ be the forgetful map which forgets the orbifold structure and let $\Mbar\subset \Mbar_{g, (\one)}(\cY, (1,0))$ be the locus where the monodromies over the $2g$ nontrivial loops on the domain curve are all trivial. Then the degree of $\pi|_{\Mbar}:\Mbar\to \Mbar_{g, 1}(\bP^1, (1))$ is equal to $\frac{1}{n+1}$. Therefore we have
$$
\langle (1,0)|\tau_1[F]|\rangle^{\bC^2\times\bP^1,\circ}_{g,\emptyset}=\frac{1}{n+1}\langle (1)|\tau_1[F]|\rangle^{\bC^2\times\bP^1,\circ}_{g}.
$$
Let the base $\bP^1$ degenerate into two components, such that the relative marked point and the fiber insertion lies on the same component. Degeneration formula implies
$$
\langle (1)|\tau_1[F]|\rangle^{\bC^2\times\bP^1,\circ}_{g}=\sum_{g_1+g_2=g}\langle (1)|\tau_1[F]|(1)\rangle^{\bC^2\times\bP^1,\circ}_{g_1}t_1t_2\langle (1)|\rangle^{\bC^2\times\bP^1,\circ}_{g_2}.
$$
By dimensional constraint, it is easy to see that $g_2=0$ and the degeneration formula reduces to
$$
\langle (1)|\tau_1[F]|\rangle^{\bC^2\times\bP^1,\circ}_{g}=\langle (1)|\tau_1[F]|(1)\rangle^{\bC^2\times\bP^1,\circ}_{g}.
$$
By rigidification, we have
$$
\langle (1)|\tau_1[F]|(1)\rangle^{\bC^2\times\bP^1,\circ}_{g}=\langle (1)|\tau_1[\one]|(1)\rangle^{\bC^2\times\bP^1,\circ,\sim}_{g}=2g\langle (1),(1)\rangle^{\bC^2\times\bP^1,\circ,\sim}_{g}.
$$
By the definition of the double ramification cycle, we have
$$
\langle (1),(1)\rangle^{\bC^2\times\bP^1,\circ,\sim}_{g}=(-1)^{2g-1}\frac{(t_1+t_2)}{t_1t_2}\int_{\Mbar_{g,2}}\DR\lambda_g\lambda_{g-1}
=\int_{[\Mbar_{g,2}(\bC^2)]^\vir}(\pi')^*\DR,
$$
where $\pi':\Mbar_{g,2}(\bC^2)\to\Mbar_{g,2}$ is the forgetful map. By the discussion of the double ramification cycle in Section \ref{pixton}, we have
$$
\int_{[\Mbar_{g,2}(\bC^2)]^\vir}(\pi')^*\DR\\
=\int_{[\Mbar_{g,2}(\bC^2)]^\vir}\frac{(\psi_1+\psi_2)^g}{2^gg!} =\int_{[\Mbar_{g,2}(\bC^2)]^\vir}2^{-g}\sum_{k+l=g}\frac{\psi_1^k}{k!}\frac{\psi_2^l}{l!}.
$$
By Grothendieck-Riemann-Roch computations or by \eqref{jianzhou}, we have
\begin{eqnarray*}
\int_{[\Mbar_{g,2}(\bC^2)]^\vir}2^{-g}\sum_{k+l=g}\frac{\psi_1^k}{k!}\frac{\psi_2^l}{l!}
&=&(-1)^{g}\frac{(t_1+t_2)}{t_1t_2}2^{-g} \Big( \frac{1}{2}\sum_{I\sqcup J=[2]}\frac{B_{2g}}{2g}(-1)^{|J|}\sum_{k+l=g}
\frac{1}{4^g(2k+1)!!(2l+1)!!k!l!}\\
&&+ \frac{1}{2}\frac{B_{2g}}{2g}\sum_{k+l=g}
\frac{1}{4^{g-1}(2k-1)!!(2l-1)!!k!l!}  \Big)\\
&=&(-1)^{g}\frac{(t_1+t_2)}{t_1t_2}\Big(\frac{1}{2}\sum_{I\sqcup J=[2]}\frac{B_{2g}}{2g}(-1)^{|J|}\sum_{k+l=g}
\frac{1}{4^g(2k+1)!(2l+1)!}\\
&&+ \frac{1}{2}\frac{B_{2g}}{2g}\sum_{k+l=g}
\frac{1}{4^{g-1}(2k)!(2l)!}  \Big).
\end{eqnarray*}
Notice that the first term in the bracket vanishes. Therefore
$$
\int_{[\Mbar_{g,2}(\bC^2)]^\vir}2^{-g}\sum_{k+l=g}\frac{\psi_1^k}{k!}\frac{\psi_2^l}{l!} = (-1)^{g}\frac{(t_1+t_2)}{t_1t_2}\frac{B_{2g}}{2g(2g)!}.
$$

Notice that for any formal variable $u$, $\frac{1}{e^u-1}=\sum_{k=0}^{\infty}\frac{B_{k}}{k!}u^{k-1}$. Let
$$
G(z) := \int\left(\frac{1}{e^z-1}-\frac{1}{z}\right)dz=\log\frac{e^z-1}{e^z}-\log z,
$$
where we let the integration constant be zero. Then
\begin{eqnarray*}
\sum_{g\geq 1}(-1)^g z^{2g}\langle (1),(1)\rangle^{\bC^2\times\bP^1,\circ,\sim}_{g} &=& \frac{(t_1+t_2)}{t_1t_2}\cdot\frac{G(z)+G(-z)}{2}  \\
	&=& \frac{(t_1+t_2)}{2t_1t_2}\left(\log (e^{z}-1)+\log(e^{-z}-1)-\log z-\log (-z)\right).
\end{eqnarray*}
Therefore
\begin{eqnarray*}
\sum_{g\geq 1}(-1)^g z^{2g-1}\langle (1,0)|\tau_1[F]|\rangle^{\cX,\circ}_{g,\emptyset} &=& \frac{1}{n+1}
\frac{\partial}{\partial z} \left( \sum_{g\geq 1}(-1)^g z^{2g}\langle (1),(1)\rangle^{\bC^2\times\bP^1,\circ,\sim}_{g}\right) \\
&=& \frac{(t_1+t_2)}{2(n+1)t_1t_2}\left(\frac{\cosh \frac{z}{2}}{\sinh \frac{z}{2}}-\frac{2}{z}\right).
\end{eqnarray*}

\section{Orbifold quantum cohomology of symmetric products}

As a generalization of \cite{Bry-Gra}, there is another theory in connection with our picture, the orbifold quantum cohomology of the symmetric products: $\Sym ([\bC^2/\bZ_{n+1}])$ and $\Sym (\cA_n)$.

Let $X$ be a scheme, and $\Sym^m (X):= [X^m/S_m]$ be its $m$-th symmetric product. We would like to consider the orbifold GW theory of $\Sym^m (X)$. Let $f: \cC\rightarrow \Sym^m (X)$ be a stable map. Following K. Costello \cite{Cos}, $f$ is equivalent to certain \'etale cover $\cC' \to \cC$ of degree $m$, together with a stable map from $\cC'$ to $X$.

Connected components of the inertia stack $I \Sym^m (X)$ are indexed by partitions $\lambda$ of $m$. For a partition $\lambda$, which corresponds to a conjugacy class of $S_m$, the associated component is
\begin{equation} \label{twisted-comp}
\left[ (X^m)^\lambda \middle/ \Aut(\lambda) \right] = X^{l(\lambda)} \times B \Aut (\lambda),
\end{equation}
where $(X^m)^\lambda$ is the fixed loci in $X^m$ under the action of elements in the conjugacy class, $\Aut(\lambda)$ is the stabilizer.

The state space for $\Sym^m (X)$ is its orbifold cohomology $H^*_\orb(\Sym^m (X))$, which is the cohomology of $I \Sym^m (X)$, with some degree shift. It is a classical result that the super graded vector space $\bigoplus_{m\geq 0} H^*_\orb(\Sym^m (X))$ can be realized as an irreducible highest weight representation of the super-Heisenberg algebra associated with $H^*(X)$. For a reference, see for example Section 5.2 in \cite{Ade-Lei-Rua}. As a result, $H^*_\orb(\Sym^m (X))$ has a basis indexed by $H^*(X)$-weighted partitions of $m$.

In the case when $X$ is a surface, there is an isomorphism
$$H^*_\orb(\Sym^m (X)) \cong H^*(\Hilb^m (X)),$$
which respects the gradings and Poincar\'e pairings. Moreover, if $X$ has trivial canonical bundle, it also preserves the (orbifold) cup product.

\subsection{$\Sym ([\bC^2/\bZ_{n+1}])$}

The description above for $I\Sym^m X$ where $X$ is a scheme does not apply to orbifolds. However, for our special target $[\bC^2 / \bZ_{n+1}]$, the symmetric product is easy to describe combinatorially. Precisely, the $m$-th symmetric product of $[\bC^2 / \bZ_{n+1}]$ is defined as
$$\Sym^m ([\bC^2/\bZ_{n+1}]) \ :=\  [[\bC^2/\bZ_{n+1}]^m/S_m] \ =\  [\bC^{2m} / (\bZ_{n+1} \wr S_m)],$$
where $\bZ_{n+1} \wr S_m := (\bZ_{n+1})^m \rtimes S_m$ is the wreath product.

Denote $G:= \bZ_{n+1} \wr S_m$. Stable maps into $[\bC^{2m}/G]$ are the same as those mapping into the origin $BG$. Evaluation maps land in the orbifold cohomology of $BG$, which is indexed by conjugacy classes of $G$, or in other words, indexed by $\bZ_{n+1}$-weighted partitions of $m$.

The age of a component indexed by such a partition $\lambdabar$ is $m-l'(\lambdabar)$, where $l'$ denotes the number of parts with trivial decoration. Components with age $1$ are exactly
$$\rhobar = (2,0)(1,0)^{m-2}, \qquad \textrm{or} \qquad (1,k)(1,0)^{m-1}, \qquad k\neq 0\in \bZ_{n+1}.$$

We aim to compute the 2-point functions, whose moduli space is
$$\Mbar_{0, (\mubar, \nubar; b, \gamma)} (\Sym^m ([\bC^2/\bZ_{n+1}])).$$
Here the datum $(\mubar, \nubar; b, \gamma)$ specifies the monodromies around marked points on the domain curve, in which $\mubar, \nubar$ are treated as marked points with insertions from the target, and $b, \gamma$ are extra marked points treated as the analog of ``degree class". More precisely, $\mubar$ and $\nubar$ are two $\bZ_{n+1}$-weighted partitions, indicating monodromies at two of the marked points; $\gamma=(\gamma_1, \cdots, \gamma_{l(\gamma)}) \in \bZ_{n+1}$ records extra marked points with monodromies $(1,\gamma_i)(1,0)^{m-1}$, $\gamma_i\neq 0$, $i=1,\cdots,l(\gamma)$; $b\geq 0$ is an integer, recording the number of extra \emph{unordered} marked points with monodromies $(2,0)(1,0)^{m-1}$. The reason why we treat these two types of extra markings differently is that we would like to count only those coming from the $\Sym$ operation as unordered.

In other words, let $\bar {D}_{\mubar}, \bar {D}_{\nubar}, \bar D_0, \cdots, \bar D_n$ be the connected components in the rigidified inertial stack $\bar I \Sym^m ([\bC^2 / \bZ_{n+1}])$ associated with partitions $\mubar,\nubar,(2,0)(1,0)^{m-2}$, and $(1,k)(1,0)^{m-1}, k=1,\cdots , n$ respectively. We define
$$\Mbar_{0, (\mubar, \nubar; b, \gamma)} (\Sym^m ([\bC^2/\bZ_{n+1}])) := [\Mbar / S_b],$$
where $\Mbar$ is the fiber product in the following Cartesian diagram
$$\xymatrix{
\Mbar \ar[r] \ar[d] & \bar {D}_{\mubar} \times \bar {D}_{\nubar} \times \bar D_0 \times \cdots \bar D_n \ar[d] \\
\Mbar_{0, 2 +b +l(\gamma)} (\Sym^m ([\bC^2/\bZ_{n+1}])) \ar[r] & \left( \bar I \Sym^m ([\bC^2/\bZ_{n+1}]) \right)^{ 2 +b +l(\gamma) }.
}$$

The 2-point functions are defined as
$$\langle \mubar, \nubar \rangle_{\Sym^m([\bC^2/\bZ_{n+1}])}:= \sum_{b, \gamma} z^b \frac{x^\gamma}{\gamma!} \int_{\left[ \Mbar_{0, (\mubar, \nubar; b, \gamma)} (\Sym^m ([\bC^2/\bZ_{n+1}])) \right]^\vir} 1,$$
where the integration is defined by $T$-localization. Similarly, we can define $r$-point functions and the following equations directly follow from the definition
$$\langle \mubar, \nubar, (2,0)(1,0)^{m-2} \rangle_{\Sym^m ([\bC^2 / \bZ_{n+1}])} = \frac{\partial}{\partial z} \langle \mubar, \nubar \rangle_{\Sym^m ([\bC^2 / \bZ_{n+1}])},$$
$$\langle \mubar, \nubar, (1,k)(1,0)^{m-1} \rangle_{\Sym^m ([\bC^2 / \bZ_{n+1}])} =  \frac{\partial}{\partial x_k} \langle \mubar, \nubar \rangle_{\Sym^m ([\bC^2 / \bZ_{n+1}])}.$$

We now apply Costello's construction (e.g. Lemma 2.2.1 in \cite{Cos}). Let $f: \cC \to \Sym^m ([\bC^2/\bZ_{n+1}])$ be a stable map, representing a geometric point in the moduli space. By definition this is equivalent to a principal $S_m$-bundle $\cP \to \cC$, together with an $S_m$-equivariant map $\cP \to [\bC^2 / \bZ_{n+1}]^m$, which is representable since $f$ is. Taking $\cC':= \cP \times_{S_m} \{1, \cdots, m\}$, which is possibly disconnected, we obtain a diagram
\begin{equation} \label{point-U_2}
\xymatrix{
\cC' \ar[d]_\pi \ar[r]^-{f'} & [\bC^2 / \bZ_{n+1}] \\
\cC,
}
\end{equation}
where $\pi$ is an \'etale covering. Note that $f'$ is representable, because the quotient by $S_m$ is free. The moduli space $\Mbar_{0, (\mubar, \nubar; b, \gamma)} (\Sym^m ([\bC^2/\bZ_{n+1}]))$ is then isomorphic to the moduli space of such \'etale coverings.

Let $\underline\pi: \underline{\cC'} \to \underline\cC$ be the induced map between coarse moduli spaces, which is a branched covering. The ramification profile is completely determined by the topological datum $(\mubar, \nubar; b, \gamma)$. For example, suppose $0, \infty \in \underline\cC$ are images of the marked points in $\cC$ associated with $\mubar, \nubar$. Then $\underline\pi^{-1}(0)$ consists of $l(\mu)$ points, with ramification degrees $\mu_1, \cdots, \mu_{l(\mu)}$, and monodromies given by decorations of $\mubar$. Similar for $\infty \in \underline\cC$. Moreover, there are $b$ branched points on $\underline\cC$ over which the ramification profiles are specified by $(2,0)(1,0)^{m-2}$, and $\gamma_k$ points on $\underline\cC$ over which the ramification profiles are specified by $(1,k)(1,0)^{m-1}$. Finally, the genus $g = g(\cC')$ can be computed via Riemann--Hurwitz:
$$
b = 2g-2 + l(\mu) + l(\nu).
$$

Replacing $[\bC^2 / \bZ_{n+1}]$ with $B\bZ_{n+1}$, there is a similar description of $\Mbar_{0, (\mubar, \nubar; b, \gamma)} ( \Sym^m (B\bZ_{n+1}) )$, where objects are diagrams as above with $f'$ mapping into $B\bZ_{n+1}$. Consider the Hurwitz--Hodge bundle $V$ associated with the $\bZ_{n+1}$-representation $\bC^2$, whose fibers are $H^1 (\cC, \pi_* (\cO_{\cC'} \otimes \bC^2))$. In the case $\rk (V) >0$, this is the obstruction bundle, and by the same argument as in previous sections,
\begin{eqnarray*}
	\int_{\left[ \Mbar_{0, (\mubar, \nubar; b, \gamma)} (\Sym^m ([\bC^2/\bZ_{n+1}])) \right]^\vir} 1 &=& \int_{\left[ \Mbar_{0, (\mubar, \nubar; b, \gamma)} (\Sym^m ( B\bZ_{n+1} )) \right]^\vir} e_T(V) \\
	&=& (t_1 + t_2) \int_{\left[ \Mbar_{0, (\mubar, \nubar; b, \gamma)} (\Sym^m ( B\bZ_{n+1} )) \right]^\vir} c_{\rk(V) - 1}(V).
\end{eqnarray*}

We now make a key observation: over certain open substacks of the moduli space, the perfect obstruction theories on $\Mbar_{0, (\mubar, \nubar; b, \gamma)} (\Sym^m ( B\bZ_{n+1} ))$ and $\Mbar^\bullet_{g, \gamma} ( B\bZ_{n+1} \times \bP^1, \mubar, \nubar)^\sim$ are the same, and the Chern class of obstruction bundles vanish on the complements of those open substacks.

More precisely, this means the following. Let $\cU_1 \subset \Mbar^\bullet_{g, \gamma} ( B\bZ_{n+1} \times \bP^1, \mubar, \nubar)^\sim$ be the open substack of relative stable maps with \emph{smooth domains and no contracted components}. Let $\cU_2 \subset \Mbar_{0, (\mubar, \nubar; b, \gamma)} (\Sym^m ( B\bZ_{n+1} ))$ be the open substack where the $\cC'$ as in diagram (\ref{point-U_2}) are \emph{smooth}.

\begin{lemma} \label{equiv-open}
	There is an equivalence of stacks
	$$
	\xymatrix{
		\Psi: \cU_2 \ar[r]^-\sim & \left[ \cU_1 \middle/ \Aut (\mu) \times \Aut(\nu) \right],
	}
	$$
	preserving the obstruction theories and obstruction bundles.
\end{lemma}

\begin{proof}
	For a scheme $S$, consider an object $(f', \pi)$ of $\cU_2$ over $S$, represented by a diagram as (\ref{point-U_2}), i.e. $\cC'$ and $\cC$ are families of twisted curves over $S$, and $f': \cC' \to S\times [\bC^2 / \bZ_{n+1}]$ is a family of prestable maps. Since the fibers of $\cC'$ are required to have irreducible connected components, the fibers of $\cC$ has to be irreducible, whose coarse moduli space has to be $\bP^1$. Let $\overline\cC$ be the coarse moduli space. The markings $\mubar$ and $\nubar$ induce sections of $\overline\cC$ over $S$. Let $\fM_{0,2}^\circ \subset \fM_{0,2}$ be the open substack consisting curves with irreducible domains. Then $\overline\cC$ defines a map $S \to \fM_{0,2}^\circ$.
	
	Let $\fT^\sim$ the classifying stack of expanded pairs with respect to the pair $(\bP^1,  \{0\} , \{\infty\})$, with non-rigid target, and let $\fY^\sim$ be the universal family.  By the construction of Graber--Vakil \cite{GV}, the open substack $\fM_{0,2}^{ss} \subset \fM_{0,2}$ of semistable curves, which contains $\fM_{0,2}^\circ$, is actually isomorphic to $\fT^\sim$. Moreover, for our special target $(\bP^1,  \{0\} , \{\infty\})$, the universal families over $\fM_{0,2}^{ss}$ and $\fT^\sim$ are also isomorphic. Therefore, the map $S\to \fM_{0,2}^\circ$ gives a family of relative stable maps to the non-rigid target
	$$
	\xymatrix{
	\overline\cC \ar[dr] \ar[r] & \cY_S \ar[d] \ar[r] & \fY^\sim \ar[d] \\
	& S \ar[r] &  \fT^\sim .
}
	$$
	
	Let $\bar\pi: \cC' \to \overline\cC$ be the composition of $\pi$ with the coarse moduli. Together with $f'$, this defines a map $(f', \bar\pi): \cC' \to  B\bZ_{n+1} \times \cY_S$, which is representable since $f'$ is, and stable since $\pi$ is \'etale. We therefore define it to be the image of the object $(f, \pi)$ under $\Psi$. The image has to be considered up to the action of $\Aut(\mu) \times \Aut(\nu)$ since there is no ordering for marked points that come from the ramification points of the covering.
	
	The inverse can be constructed easily. The families of expanded pairs with respect to the pair $(B\bZ_{n+1} \times \bP^1, B\bZ_{n+1} \times \{0\}, B\bZ_{n+1} \times \{\infty\})$ are exactly those obtained from $\cY_S \to S$ by base change over $B\bZ_{n+1}$. Then, given an object $\bar f': \cC' \to B\bZ_{n+1} \times \cY_S$ in $\cU_1(
	S)$, we have the family of branched covering $\cC' \to \cY_S$. Applying the root construction, one can always add orbifold structures on the branch and ramification divisors, to make the modified map \'etale, hence forming an object in $\cU_2$. One can check that this is the inverse to $\Psi$.
	
The compatibility of obstruction theories follows from the fact that they are both unobstructed over the chosen open stratum, and hence both stacks are smooth. The compatibility of obstruction bundles follows from the fact that $\pi: \cC' \to \cC$ is finite \'etale, and hence  $H^1 (\cC, \pi_* (\cO_{\cC'} \otimes \bC^2) ) = H^1 (\cC', \cO_{\cC'} \otimes \bC^2)$.
\end{proof}

Therefore, we now have the following diagram:
$$\xymatrix{
\cU_1 \ar[d]_-i \ar@{^{(}->}[r]^-j & \Mbar^\bullet_{g, \gamma} (B\bZ_{n+1}\times \bP^1, \mubar, \nubar)^\sim \ar[d]^-q \\
\Mbar_{0, (\mubar, \nubar; b, \gamma)} (\Sym^m ( B\bZ_{n+1} )) \ar[r]^-p  & \Mbar^\bullet_{g, \mubar\sqcup \nubar\sqcup \gamma} (B\bZ_{n+1}),
}$$
where $j$ is an open embedding and $i$ is \'etale, whose image is the open substack $\cU_2$.

\begin{lemma} \label{Sym-rubber}
Let $\cU_3 \subset \Mbar^\bullet_{g, \mubar\sqcup \nubar\sqcup \gamma} (B\bZ_{n+1})$ be the open substack of curves whose connected components are irreducible. Then the class
$$p_* \left[ \Mbar_{0, (\mubar, \nubar; b, \gamma)} (\Sym^m ( B\bZ_{n+1} )) \right]^\vir- \frac{q_* \left[ \Mbar^\bullet_{g, \gamma} (B\bZ_{n+1}\times \bP^1, \mubar, \nubar)^\sim \right]^\vir}{|\Aut(\mu)| |\Aut(\nu)|} $$
is supported on the complement $\Mbar^\bullet_{g, \mubar\sqcup \nubar\sqcup \gamma} (B\bZ_{n+1}) \backslash \cU_3$.
\end{lemma}

\begin{proof}
For simplicity, we denote $\Mbar_1 := \Mbar^\bullet_{g, \gamma} (B\bZ_{n+1}\times \bP^1, \mubar, \nubar)^\sim$ and $\Mbar_2 = \Mbar_{0, (\mubar, \nubar; b, \gamma)} (\Sym^m ( B\bZ_{n+1} )) $. Let $\tilde j : \tilde\cU_1 \subset \Mbar_1$ be the open substack of relative stable maps with \emph{no contracted irreducible components}. Then $\cU_1 \subset \tilde\cU_1$ dense open, and $q^{-1} (\cU_3) \subset \tilde\cU_1$.

Consider the excision exact sequence $A_* (\Mbar_1 \backslash \tilde\cU_1) \to A_* (\Mbar_1) \xrightarrow{\tilde j^*} A_* (\tilde\cU_1) \to 0$.  Since obstruction theories are compatible, and $\tilde\cU_1$ is unobstructed, we have $\tilde j^* [\Mbar_1]^\vir = [\tilde\cU_1]^\vir = [\tilde\cU_1]$. On the other hand, $\cU_1 \subset \tilde\cU_1$ is dense; so they have the same closure in $\Mbar_1$, which we denote by $\bar\cU_1$. Hence $[\Mbar_1]^\vir - [\bar\cU_1] \in A_* (\Mbar_1 \backslash \tilde\cU_1)$. Moreover, $q$ maps $\Mbar_1 \backslash \tilde\cU_1$ into the complement of $\cU_3$, we have $q_* [ \Mbar_1]^\vir - q_* [\bar\cU_1]$ is supported on $\Mbar_{g, \mubar\sqcup \nubar\sqcup \gamma} (B\bZ_{n+1}) \backslash \cU_3$. The same is true for $p_* [\Mbar_2]^\vir - p_* [ \bar\cU_2 ]$, since actually $\bar\cU_2 = \Mbar_2$ and $\Mbar_2$ is unobstructed. Finally, looking at generic points, by Lemma \ref{equiv-open}, we have $q_* [\bar\cU_1] - |\Aut(\mu)| |\Aut(\nu)| \cdot p_* [\bar\cU_2]$ is supported on the complement of $\cU_3$. Hence the lemma holds.
\end{proof}

Recall that the vanishing result Proposition \ref{vanishing} states that $c_{r_1+r_2-1} (V_1\oplus V_2)$, as a class on $\Mbar^\bullet_{g, \mubar \sqcup \nubar \sqcup \gamma} (B\bZ_{n+1})$, vanishes on the complement of $\cU_3$. Hence Lemma \ref{Sym-rubber} establish an identity between 2-point functions of $\Sym^m ([\bC^2 / \bZ_{n+1}])$ and rubber GW invariants of $[\bC^2 / \bZ_{n+1}] \times \bP^1$. Passing back to the disconnected theory, we obtain the following GW/Sym correspondence.

\begin{theorem}[GW/Sym correspondence] \label{GW-Sym}
  Given $\mubar, \nubar, \rhobar\in \cF_{[\bC^2/\bZ_{n+1}]}$, with
  $$\rhobar = (1,0)^m, \qquad (2,0)(1,0)^{m-2}, \qquad \textrm{or} \qquad (1,k)(1,0)^{m-1},$$
  where $k\neq 0$, we have
  $$z^{l(\mu)+l(\nu)+l(\rho)-m} Z'_\GW([\bC^2/\bZ_{n+1}]\times \bP^1)_{\mubar, \nubar, \rhobar} =  \langle \mubar, \nubar, \rhobar \rangle_{\Sym^m ([\bC^2/\bZ_{n+1}])},$$
  where the right hand side is the 3-point genus-zero orbifold GW invariants of $\Sym^m ([\bC^2/\bZ_{n+1}])$.
\end{theorem}

\begin{proof}
As explained above, in case $\rk(V) = r_1 + r_2 > 0$, 2-point invariants $\langle \mubar, \nubar \rangle_{\Sym^m([\bC^2/\bZ_{n+1}])}$ are exactly the same as as 2-point rubber invariants for $[\bC^2 / \bZ_{n+1}] \times \bP^1$. In other words,
$$\langle \mubar, \nubar \rangle_{\Sym^m([\bC^2/\bZ_{n+1}])} \qquad \text{and} \qquad z^{l(\mu) + l(\nu) - 2} Z'_\GW ([\bC^2 / \bZ_{n+1}] \times \bP^1 )_{\mubar, \nubar}^\sim$$
have the same $(t_1+t_2)$-linear parts.

For the constant terms, i.e. the $\rk(V) = \vdim = 0$ case, the 3-point functions on $\Sym^m([\bC^2/\bZ_{n+1}])$ reduce to the ordinary orbifold cup product. One can check the identity by direct computation.
\end{proof}

\subsection{$\Sym (\cA_n)$}

As mentioned before, the orbifold cohomology of $\Sym^m (\cA_n)$ are parameterized by $H^*(\cA_n)$-weighted partitions of $m$. An insertion of partition $\vec\lambda$ at a marked point $x\in \cC$ indicates that the evaluation map lands into the twisted sector associated with $\lambda$. The age of such a component is $m-l(\lambda)$. Hence a basis of $H^2_\orb(\Sym^m (\cA_n))$ can be taken as the following,
$$(2,1)(1,1)^{m-2}, \qquad (1,\omega_i)(1,1)^{m-1}, \qquad 1\leq i\leq n,$$
where $\omega_1, \cdots, \omega_n \in H^2(\cA_n, \bQ)$ is the dual basis to the exceptional curves in $\cA_n$.

The 3-point function of this theory has been computed by Cheong--Gholampour \cite{Che-Gho}. By definition the $r$-point function is
$$\langle \vec\mu^1, \cdots, \vec\mu^r \rangle_{\Sym^m(\cA_n), (b, \beta)}:= \int_{\left[ \Mbar_{0,r} (\Sym^m(\cA_n), (b, \beta)) \right]^\vir} \prod_{i=1}^r \ev_i^* \vec\mu^i,$$
$$\langle \vec\mu^1, \cdots, \vec\mu^r \rangle_{\Sym^m(\cA_n)}:= \sum_{b, \beta} z^b  s_1^{(\beta, \omega_1)} \cdots s_n^{(\beta, \omega_n)} \langle \vec\mu^1, \cdots, \vec\mu^r \rangle_{\Sym^m(\cA_n), (b, \beta)},$$
where the integer $b>0$ here stands for the number of \emph{unordered} extra marked points with monodromies $(2,1)(1,1)^{m-2}$.

\begin{theorem}[GW/Sym correspondence for $\cA_n$, Theorem 0.3 in \cite{Che-Gho}]
  Given $\vec\mu, \vec\nu, \vec\rho\in \cF_{\cA_n}$, with
  $$\vec\rho = (1,1)^m, \qquad (2,1)(1,1)^{m-2}, \qquad \textrm{or} \qquad (1,\omega_k)(1,1)^{m-1},$$
  we have
  $$z^{l(\mu)+l(\nu)+l(\rho)-m} Z'_\GW(\cA_n\times \bP^1)_{\vec\mu, \vec\nu, \vec\rho} =  \langle \vec\mu, \vec\nu, \vec\rho \rangle_{\Sym^m (\cA_n)},$$
  where the right hand side is the 3-point genus-zero orbifold GW invariants of $\Sym^m (\cA_n)$.
\end{theorem}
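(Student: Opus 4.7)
The proof is the smooth-surface analogue of the GW/Sym correspondence for $[\bC^2/\bZ_{n+1}]$ just established in the preceding subsection, and was carried out in \cite{Che-Gho}. My plan is to mirror the three-step strategy of that proof, substituting $\cA_n$ for the orbifold throughout, and to reuse the structural ingredients (Costello's moduli identification, the divisor/rigidification reduction, and a boundary-vanishing argument).

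\textbf{Step 1: Costello's identification.} I would apply Lemma \ref{lem-Cos} with $X = \cA_n$ to rewrite
$$\Mbar_{0, \vec\mu, \vec\nu, b, \gamma}(\Sym^m(\cA_n)) \ \cong \ \bigl[\Mbar_{\eta'\twoheadrightarrow\eta}(\cA_n) \bigm/ \Aut(\mu)\times\Aut(\nu)\bigr],$$
where $\eta' \twoheadrightarrow \eta$ encodes an admissible degree-$m$ cover $\cC'\to\cC$ with $\cC\cong\bP^1$, ramification profiles $\mu$ and $\nu$ at two distinguished points, $b$ extra simple branch points (from insertions of type $(2,1)(1,1)^{m-2}$), and ordinary marked points with $H^*(\cA_n)$-insertions coming from $(1,\omega_k)(1,1)^{m-1}$. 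Riemann--Hurwitz determines the genus $g$ of $\cC'$ by $b = 2g-2+l(\mu)+l(\nu)$, and by the second part of Lemma \ref{lem-Cos} the perfect obstruction theories over $\fM_\eta$ agree.

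\textbf{Step 2: Reduction on the relative side.} I would replay the rigidification and degeneration arguments from Section \ref{relative}, with $\cA_n$ replacing $[\bC^2/\bZ_{n+1}]$, to express
$$Z'_\GW(\cA_n\times\bP^1)_{\vec\mu,\vec\nu,\vec\rho}$$
as (a combinatorial factor times) a 2-point rubber invariant on $\Mbar_{g,\gamma}^{\sim}(\cA_n\times\bP^1,\mu,\nu)$. The three prescribed shapes of $\vec\rho$ correspond exactly to the three cases handled in Section 3 for the orbifold, and the prefactor $z^{l(\mu)+l(\nu)+l(\rho)-m}$ is precisely the discrepancy between Euler-characteristic counting on the GW side and branch-point counting $b$ on the Sym side coming from Riemann--Hurwitz.

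\textbf{Step 3: Matching the two integrals.} After Steps 1 and 2, both sides are integrals over moduli spaces that share a common open substack $U$ on which the upper domain curve $\cC'$ is irreducible of genus $g$ and maps as an $m$-fold cover of $\bP^1$. On $U$ the obstruction theories reduce to the standard obstruction theory of maps $\cC'\to\cA_n$, so the integrands match tautologically. What remains is the boundary-vanishing statement: contributions from strata where $\cC'$ degenerates into a reducible curve must cancel on both sides. This is the smooth analogue of Theorem \ref{vanishing}, and one expects it to follow from the classical Mumford relation $c_t(\bE)c_{-t}(\bE)=1$ for the Hodge bundle of $\cC'$, combined with the equivariant normal-bundle geometry on $\cA_n$.

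The main obstacle lies in Step 3. Unlike the orbifold case, where non-trivial characters $U$, $U^\vee$ of $\bZ_{n+1}$ forced the Hurwitz--Hodge classes to vanish on reducible boundary, here one does not have such characters to exploit directly. The natural remedy is to apply $T$-localization on $\cA_n$ to reduce to contributions supported at the $T$-fixed points of $\cA_n$. Each $T$-fixed point looks locally like $\bC^2$ with a standard torus action, and the reducible-boundary contributions are governed by equivariant Hodge integrals on the covering curves, where the usual Mumford relation produces the desired vanishing. The orbifold Mumford relation used in Theorem \ref{vanishing} is in fact the global upgrade of this local vanishing, so the same calculation adapts once one carefully bookkeeps the equivariant weights coming from the $T$-fixed structure of $\cA_n$.
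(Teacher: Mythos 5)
The paper does not prove this statement at all: it is quoted as Theorem 0.3 of Cheong--Gholampour \cite{Che-Gho}, whose proof computes the genus-zero (extended) three-point functions of $\Sym^m(\cA_n)$ in closed form and matches them against Maulik's explicit formula for the relative/rubber theory of $\cA_n\times\bP^1$ from \cite{Mau}. So your proposal is an attempt at a different, more geometric proof, and its decisive Step 3 has a genuine gap. Step 1 (Lemma \ref{lem-Cos} with $X=\cA_n$) and Step 2 (rigidification/degeneration to rubber invariants, which is exactly Maulik's reduction in \cite{Mau}) are fine in spirit, but they are not where the work lies.

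The gap is that the entire mechanism you are transplanting from the orbifold case rests on the fact that every stable map to $[\bC^2/\bZ_{n+1}]$ is constant: both moduli problems then become twisted covers carrying the Hurwitz--Hodge bundle $\bE_U\oplus\bE_{U^\vee}$, whose Chern class $c_{r_1+r_2-1}$ has exactly the degree of the virtual class and vanishes on all boundary strata except the irreducible ones with nontrivial nodal monodromy (Theorem \ref{vanishing}); that vanishing is what allows replacing either virtual class by the closure of the common open substack $U$. For $\cA_n$ none of this is available: maps $\cC'\to\cA_n$ are genuinely non-constant, every curve class $\beta$ supported on the exceptional curves contributes (this is precisely what the variables $s_1,\dots,s_n$ record), the obstruction theory is not a Hodge-type bundle of complementary rank, and there is no cohomology class of top-minus-one degree forcing boundary contributions to vanish. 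Your proposed remedy --- $T$-localize and invoke the classical Mumford relation at the fixed points --- does not produce the claimed cancellation: localization converts both sides into sums over fixed loci involving multiple covers of the exceptional $\bP^1$'s weighted by equivariant Hodge integrals, and these contributions are nonzero in every degree $\beta$; they are exactly the source of the nontrivial $s$-dependence of both generating functions, not terms that cancel. Hence the assertion that ``the integrands match tautologically on $U$ and the reducible boundary cancels'' is false as stated, and the equality of the two generating functions is known only through the closed-form computations of \cite{Che-Gho} and \cite{Mau}. To complete a proof along your lines you would need a genuine comparison of the two virtual classes including all boundary and multiple-cover contributions, which your sketch does not supply.
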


\section{GW/DT/Hilb/Sym correspondence}

\subsection{Relative DT theory and GW/DT correspondence of $[\bC^2/\bZ_{n+1}]\times \bP^1$}

Recall the setting of Section \ref{relative-GW}. Let $\cX = [\bC^2/\bZ_{n+1}]\times \bP^1$ be the target and $z_1, \cdots, z_r$ be $r$ points on $\cY = B\bZ_{n+1}\times \bP^1$. Consider the DT theory for $\cX$, relative to the fibers $[\bC^2/\bZ_{n+1}]\times \{z_i\}$, $i=1, \cdots, r$. For the detailed definition of relative DT theory we refer the readers to \cite{Zhou3,Zhou2}. The moduli space of relative DT theory is the relative Hilbert stack
$$\Hilb^{m,\varepsilon}(\cX[k], \vec\mu^1, \cdots, \vec\mu^r),$$
parameterizing 1-dimensional compactly supported closed substacks $\cZ$ in the modified ``bubbled" target $\cX[k]$, for all $k\geq 0$. The stacky curves $\cZ$ are required to satisfy some transversality and stability conditions.

Here $m\geq 0$ is an integer, and $\varepsilon = (\varepsilon_0, \cdots, \varepsilon_n) \in \bZ^{n+1}$. The pair $(m,\varepsilon)$ indicates the topological datum
$$[\cO_\cZ] = m [\cO_\cY\otimes \bC_\reg] + \sum_{j=0}^n \varepsilon_j [\cO_\pt\otimes \bC_{\zeta^j}] \in K(\cX),$$
where $\bC_{\zeta^j}$ denotes the 1-dimensional $\bZ_{n+1}$-representation with weight $j$, and $\bC_\reg$ denotes the regular representation.

$\mubar^i$'s are relative insertions specifying restrictions of $\cZ$ to boundary divisors, living in the Fock space
$$
H^*(\Hilb^m([\bC^2/\bZ_{n+1}])) \cong H^*(\Hilb^m(\cA_n)) = \cF_{\cA_n}.
$$
Note that this space is the same as the Fock space for $\cA_n$, but different from $\cF_{[\bC^2 / \bZ_{n+1}]}$.

The relative DT invariants are defined via $T$-localization with respect to the 2-dimensional torus $T$ acting on the fiber. One can form the generating function
$$\langle \vec\mu^1, \cdots, \vec\mu^r \rangle_m := \sum_{\varepsilon\geq 0} q_0^{\varepsilon_0} \cdots q_n^{\varepsilon_n} \langle \vec\mu^1, \cdots, \vec\mu^r \rangle_{m, \varepsilon},$$
and the corresponding \emph{reduced} DT $r$-point function
$$Z'_\DT([\bC^2/\bZ_{n+1}]\times \bP^1)_{\vec\mu^1, \cdots, \vec\mu^r} := \frac{\langle \vec\mu^1, \cdots, \vec\mu^r \rangle_m}{\langle \vec\mu^1, \cdots, \vec\mu^r \rangle_0},$$
where we omit the number $m$ which is always fixed and implicit in the formula.

In \cite{Zhou2} the following theorem is proved, indicating a close connection between relative reduced DT theory of $\cX$ and the quantum cohomology of $\Hilb^m([\bC^2/\bZ_{n+1}])$.

\begin{theorem}[DT/Hilb correspondence, Theorem 1.1 of \cite{Zhou2}]
  Given $\vec\mu, \vec\nu, \vec\rho \in \cF_{\cA_n}$, with
$$\vec\rho = (1,1)^m, \qquad (2,1)(1,1)^{m-2}, \qquad \textrm{or} \qquad (1,\omega_k)(1,1)^{m-1},$$
we have $T$-equivariantly,
  $$Z'_\DT([\bC^2/\bZ_{n+1}]\times \bP^1)_{\vec\mu, \vec\nu, \vec\rho} = \langle \vec\mu, \vec\nu, \vec\rho \rangle_{\Hilb^m([\bC^2/\bZ_{n+1}])},$$
  where the right hand side is the 3-point genus-zero GW invariants of $\Hilb^m([\bC^2/\bZ_{n+1}])$.
\end{theorem}

In this DT/Hilb correspondence, there is no change of variables or analytic continuation. The Fock spaces on both sides are the same, and the parameters $q$ are identical. Using this result, one can prove a crepant resolution correspondence between the relative DT theories of $\cX= [\bC^2/\bZ_{n+1}]\times \bP^1$ and $\cA_n\times \bP^1$.

Recall that there is an explicit isomorphism between cohomology groups
$$\Phi: H^*_\orb([\bC^2/\bZ_{n+1}]) \cong H^*(\cA_n),$$
$$e_0 \mapsto 1, \qquad e_i \mapsto \frac{\zeta^{i/2} - \zeta^{-i/2}}{n+1} \sum_{j=1}^n \zeta^{ij} \omega_j, \qquad 1\leq i\leq n,$$
where $\omega_1, \cdots, \omega_n \in H^2(\cA_n, \bQ)$ is the dual basis to the exceptional curves in $\cA_n$. Under this isomorphism, we can explicitly identify the Fock spaces $\cF_{[\bC^2/\bZ_{n+1}]} \cong \cF_{\cA_n}$.

For a curve $Z\subset \cA_n\times \bP^1$, its topological data are specified by the pair $(\chi, (\beta, m))$, where $\chi = \chi(\cO_Z)\in \bZ$ and $\beta\in H_2(\cA_n, \bZ)$ such that $m[\bP^1] + \beta = [Z]\in H_2(\cA_n\times \bP^1, \bZ)$. The generating function for the relative DT theory of $\cA_n\times \bP^1$ is defined in \cite{Mau-Ob2} as
$$Z_\DT(\cA_n\times \bP^1)_{\vec\mu^1, \cdots, \vec\mu^r} := \sum_{\chi, \beta} Q^\chi s_1^{(\beta, \omega_1)} \cdots s_n^{(\beta, \omega_n)} \langle \vec\mu^1, \cdots, \vec\mu^r \rangle^{\cA_n\times \bP^1}_{\chi, (\beta, m)},$$
and the \emph{reduced} partition function $Z'_\DT$ is defined by quotient out the degree $0$ contribution.

In \cite{Zhou2} the DT crepant resolution correspondence was stated for the $T^\pm$-equvariant theories, where $T^\pm \subset T$ is the anti-diagonal torus. To match it with our results here, we need to consider a slight variant of that. The $\beta = 0$ part of the DT partition function for $\cA_n \times \bP^1$ corresponds to the $\varepsilon_0 = \cdots = \varepsilon_n$ part of the DT partition function for $[\bC^2 / \bZ_{n+1}] \times \bP^1$.

\begin{definition}
	We call
	$$
	Z'_{\DT, \varepsilon-\irreg} (\cA_n\times \bP^1)_{\vec\mu, \vec\nu, \vec\rho} := Z'_\DT(\cA_n\times \bP^1)_{\vec\mu, \vec\nu, \vec\rho} - Z'_{\DT, \varepsilon_0 = \cdots = \varepsilon_n}(\cA_n\times \bP^1)_{\vec\mu, \vec\nu, \vec\rho}
	$$
	the \emph{$\varepsilon$-irregular} part of the partition function, which under the crepant resolution, matches with the $\beta \neq 0$ part for $\cA_n \times \bP^1$.
\end{definition}

\begin{theorem}[DT crepant resolution, Variant of Theorem 1.2 in \cite{Zhou2}] \label{DT-CRC}
  Given $\vec\mu, \vec\nu, \vec\rho\in \cF_{\cA_n}$, with
  $$
  \vec\rho = (1,1)^m, \qquad (2,1)(1,1)^{m-2}, \qquad \textrm{or} \qquad (1,\omega_k)(1,1)^{m-1},
  $$
 we have $T$-equivariantly, under the change of variables
 $$Q = q_0 q_1 \cdots q_n, \qquad s_i = q_i, \qquad i\geq 1,$$
  \begin{enumerate}[1)]
 	\item when $l(\vec\mu)+ l(\vec\nu)=2$,  and $\vec\rho = (1,1)^m$ or $(2,1)(1,1)^{m-2}$,
 	$$
 	Q^{-m} Z'_\DT(\cA_n\times \bP^1)_{\vec\mu, \vec\nu, \vec\rho} = Z'_\DT([\bC^2/\bZ_{n+1}]\times \bP^1)_{\vec\mu, \vec\nu, \vec\rho};
 	$$
 	\item when $l(\vec\mu)+ l(\vec\nu)\geq 3$, or $\vec\rho =(1,\omega_k)(1,1)^{m-1}$,
 	$$
 	Q^{-m} Z'_{\DT, \beta\neq 0}(\cA_n\times \bP^1)_{\vec\mu, \vec\nu, \vec\rho} = Z'_{\DT, \varepsilon-\irreg} ([\bC^2/\bZ_{n+1}]\times \bP^1)_{\vec\mu, \vec\nu, \vec\rho}.
 	$$
 \end{enumerate}
\end{theorem}

\begin{remark}
Analogous to the GW theories, the $\beta = 0$ or $\varepsilon$-regular part of the DT partition can also be reduced to $Z'_{\DT} (\bC^2 \times \bP^1)_{\mu, \nu, \rho}$ and the classical $n$-point functions of the state space, i.e., $T$-equivariant cohomologies of $\Hilb^m (\cA_n)$ and $\Hilb^m ([\bC^2 / \bZ_{n+1}])$. The two cohomologies are isomorphic as rings if one reduces to the $T^\pm$-equivariant theory (which is the approach taken in \cite{Zhou2}), but not in general. Here in Theorem \ref{DT-CRC}, by considering Case 1) and 2), we also avoid those exceptional unmatched classical $n$-point functions.
\end{remark}

The GW/DT correspondence for $\cA_n\times \bP^1$ was proved in \cite{Mau-Ob2}.

\begin{theorem}[GW/DT correspondence for $\cA_n\times \bP^1$, Theorem 1.1 in \cite{Mau-Ob2}] \label{GW-DT-An}
  Given $\vec\mu, \vec\nu, \vec\rho \in \cF_{\cA_n}$, with
  $$\vec\rho = (1,1)^m, \qquad (2,1)(1,1)^{m-2}, \qquad \textrm{or} \qquad (1,\omega_k)(1,1)^{m-1},$$
  we have
  $$(-iz)^{l(\mu)+l(\nu)+l(\rho)-m} Z'_\GW(\cA_n\times \bP^1)_{\vec\mu, \vec\nu, \vec\rho} = (-Q)^{-m} Z'_\DT(\cA_n\times \bP^1)_{\vec\mu, \vec\nu, \vec\rho},$$
  under the change of variables $Q=-e^{iz}$.
\end{theorem}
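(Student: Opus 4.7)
The plan is to adapt the three-stage reduction developed in Sections \ref{relative}--\ref{rubber-inv} to the smooth resolution side, to run the analogous $T$-equivariant localization on the relative Hilbert stack, and to identify both outcomes with genus-zero 3-point functions on $\Hilb^m(\cA_n)$.

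First, I would apply the relative GW and DT degeneration formulas to break the $r$-point partition functions of $\cA_n\times\bP^1$ into three-point pieces, then use the generation conjecture and the rigidification arguments of Section \ref{relative} to reduce to the case where the third partition is one of $(1,1)^m$, $(2,1)(1,1)^{m-2}$, or $(1,\omega_k)(1,1)^{m-1}$. Inserting $\tau_1[F]$ or $\tau_0[\iota_*\omega_k]$ at a free marking expresses each such three-point invariant as a 2-point rubber invariant; on the GW side the dimension count, the Hodge-bundle Mumford relation, and Pixton's formula (Theorem \ref{double}) evaluate the rubber integral in terms of $\cS$-functions of $\mu_iz$ and $\nu_jz$ graded by the $\cA_n$-curve classes $\beta$, in direct parallel to Section \ref{rubber-inv}.

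The second step is the DT-side evaluation and the comparison. On the DT side, one runs $T$-localization on the relative Hilbert scheme of ideal sheaves on $\cA_n\times\bP^1$; since $\cA_n$ is toric with $n$ exceptional $(-2)$-curves, the fixed loci decompose into edge contributions which one computes as rational functions in $q$ and the K\"ahler parameters $s_j$, matching the operator formalism of \cite{Mau-Ob2}. By the GW/Sym correspondence of Cheong--Gholampour and a DT/Hilb correspondence for $\cA_n\times\bP^1$ parallel to Theorem \ref{DT-CRC}, both sides identify with 3-point genus-zero invariants of $\Hilb^m(\cA_n)$, from which the GW/DT equality follows after the standard change of variable $Q=-e^{iz}$ together with the degree normalization $(-iz)^{l(\mu)+l(\nu)+l(\rho)-m}$.

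The hardest step is the last identification. The Nakajima--Grojnowski isomorphism $H^*_\orb(\Sym^m(\cA_n))\cong H^*(\Hilb^m(\cA_n))$ preserves the classical cup product for surfaces with trivial canonical bundle, but not automatically the quantum product. The content of the theorem is therefore precisely that the two quantum operators of multiplication by $\vec\rho$ on Fock space, one built from the Sym/GW side and one from the Hilb/DT side, coincide after the given change of variables. Matching them amounts to comparing two families of vertex-operator expressions; a vanishing result analogous to Theorem \ref{vanishing} on the GW side should localize the problem to the main stratum, and a direct computation of matrix coefficients on the Nakajima basis then closes the gap.
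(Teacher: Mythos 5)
This statement is not proved in the paper at all: it is Theorem~1.1 of Maulik--Oblomkov, which the present paper simply cites as a known ingredient immediately before combining it with Theorem~\ref{thm:rubber} and Theorem~\ref{DT-CRC} to deduce the orbifold GW/DT correspondence. There is no ``paper's own proof'' to compare against; the correct move here is just to quote the reference.

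Setting that aside, your proposed route would not reconstruct the Maulik--Oblomkov argument, and has at least two concrete problems. First, you import the orbifold machinery of Sections~\ref{rubber-inv}--\ref{sec-gen-conj} (Hurwitz--Hodge bundles, the orbifold Mumford relation, Pixton's formula, and a vanishing result ``analogous to Theorem~\ref{vanishing}'') to compute the $\cA_n$-side rubber invariants, but $\cA_n$ is a smooth surface: the obstruction bundle is an ordinary Hodge bundle twisted by line bundles pulled back along the exceptional classes, not a Hurwitz--Hodge bundle over $B\bZ_{n+1}$, and the argument in Theorem~\ref{vanishing} depends on monodromies at nodes being nontrivial, which has no meaning on the smooth side. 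Maulik's computation of the $\cA_n$ rubber invariants in \cite{Mau} is done by other means (TQFT/degeneration structure), predating Pixton's formula. Second, your final step routes GW through Sym (Cheong--Gholampour) and DT through a ``DT/Hilb for $\cA_n\times\bP^1$ parallel to Theorem~\ref{DT-CRC}'', then identifies both with quantum cohomology of $\Hilb^m(\cA_n)$. But the DT/Hilb correspondence for $\cA_n\times\bP^1$ was itself established by combining the GW/DT correspondence with GW/Hilb; invoking it as an input to prove GW/DT would be circular unless you supply an independent proof of that link, which you do not. The ``direct computation of matrix coefficients on the Nakajima basis'' that you defer to as the closing step is in fact the substantive content of the theorem and is left entirely open. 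You should instead state the theorem with its citation and devote your effort to the parts of the paper that are actually new, namely the orbifold rubber computation and the resulting crepant resolution/GW-DT correspondences.
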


Combining Theorem \ref{GW-CRC}, Theorem \ref{DT-CRC} and Theorem \ref{GW-DT-An}, we obtain the following main theorem.

\begin{theorem}[GW/DT correspondence] \label{GW-DT}
  Given $\mubar, \nubar, \rhobar\in \cF_{[\bC^2/\bZ_{n+1}]}$, with
  $$
  \rhobar = (1,0)^m, \qquad (2,0)(1,0)^{m-2}, \qquad \textrm{or} \qquad (1,k)(1,0)^{m-1},
  $$
 let $\vec\mu$, $\vec\nu$, $\vec\rho$ be their correspondents in $\cF_{\cA_n}$. Then
  under the change of variables
  $$
  Q =q_0 q_1 \cdots q_n = -e^{iz}, \qquad q_j = \zeta \exp \left( \frac{1}{n+1} \sum_{a=1}^n (\zeta^{a/2}- \zeta^{-a/2}) \zeta^{ja} x_a \right), \qquad 1\leq j\leq n,
  $$
  we have
 \begin{enumerate}[1)]
 	\item when $l(\mubar)+ l(\nubar)=2$,  and $\rhobar = (1,0)^m$ or $(2,0)(1,0)^{m-2}$,
 	$$
 	(-iz)^{l(\mu)+l(\nu)+l(\rho)-m} Z'_\GW([\bC^2/\bZ_{n+1}]\times \bP^1)_{\mubar, \nubar, \rhobar} = (-1)^m Z'_\DT([\bC^2/\bZ_{n+1}]\times\bP^1)_{\vec\mu, \vec\nu, \vec\rho};
 	$$
 	\item when $l(\mubar)+ l(\nubar)\geq 3$, or $\rhobar =(1,k)(1,0)^{m-1}$,
 	$$
 	(-iz)^{l(\mu)+l(\nu)+l(\rho)-m} Z'_\GW([\bC^2/\bZ_{n+1}]\times \bP^1)_{\mubar, \nubar, \rhobar} = (-1)^m Z'_{\DT, \varepsilon-irreg} ([\bC^2/\bZ_{n+1}]\times\bP^1)_{\vec\mu, \vec\nu, \vec\rho}.
 	$$
 \end{enumerate}
\end{theorem}

The formula for change of variables here is exactly the same as the GW/DT correspondence for CY local $\bZ_{n+1}$-gerby curves, proved in \cite{Ross-Zong}. In a future work, we will see that the relative GW/DT theory for $[\bC^2/\bZ_{n+1}]\times \bP^1$, and more generally, for nontrivial local gerby curves, is closely related to the GW/DT topological vertex in the CY case. The former is also known as the \emph{capped} vertex.

\subsection{Equivalences of theories}

As a summary of all existing results, we obtain the following diagram, indicating relationships between various theories.

\begin{figure}[h]
\begin{center}
\psfrag{A}{$\QH(\Sym(\cA_n))$}
\psfrag{B}{$\GW(\cA_n\times \bP^1)$}
\psfrag{C}{$\DT(\cA_n\times \bP^1)$}
\psfrag{D}{$\QH(\Hilb(\cA_n))$}
\psfrag{E}{$\QH(\Sym([\bC^2/\bZ_{n+1}]))$}\psfrag{F}{$\GW([\bC^2/\bZ_{n+1}]\times \bP^1)$}
\psfrag{G}{$\DT([\bC^2/\bZ_{n+1}]\times \bP^1)$}
\psfrag{H}{$\QH(\Hilb([\bC^2/\bZ_{n+1}]))$}
\hspace*{-2.8cm}\includegraphics[scale=0.5]{cube2.eps}
\end{center}
%\caption{}
\end{figure}

%$$\xymatrix{
%& \QH(\Sym(\cA_n)) \ar@{-}[dl] \ar@{-}[rr] \ar@{-}'[d][dd] & & \QH(\Hilb(\cA_n)) \ar@{-}[dl] \ar@{-}[dd] \\
%\GW(\cA_n\times \bP^1) \ar@{-}[rr] \ar@{-}[dd] && \DT(\cA_n\times \bP^1) \ar@{-}[dd] & \\
%& \QH(\Sym([\bC^2/\bZ_{n+1}])) \ar@{-}[dl] \ar@{-}'[r][rr] && \QH(\Hilb([\bC^2/\bZ_{n+1}])) \ar@{-}[dl] \\
%\GW([\bC^2/\bZ_{n+1}] \times \bP^1) \ar@{-}[rr] && \DT([\bC^2/\bZ_{n+1}]\times \bP^1) &
%}$$

A few remarks on the equivalences:

\begin{enumerate}[1)]
  \setlength{\parskip}{1ex}

  \item Each line here means a equivalence of theories, in the sense that the 3-point functions of the two theories connected by the line are equal, provided that one insertion is the identity or a divisor. The equality here is possibly up to change of variables and analytic continuation.

  \item $45^\circ$ lines indicate GW/Sym and DT/Hilb correspondences. All these correspondences are \emph{without} change of variables, and are proved via some identification of parts of the moduli's. In other words, these are the more geometric correspondences, and the holomorphic symplectic structures on $\cA_n$ and $[\bC^2/\bZ_{n+1}]$ play a crucial role.

  \item Horizontal lines in the front face are GW/DT correspondences. The change of variables involves exponential maps.

  \item Vertical lines in the front face are crepant resolution correspondences for relative GW and DT theories.

  \item Lines in the back face indicate the crepant resolution/transformation correspondence for the (partial) Hilbert--Chow morphisms
      $$\xymatrix{
      \Hilb(\cA_n) \ar[r]^-\cong & \Hilb([\bC^2/\bZ_{n+1}]) \ar[r] & \Sym(\cA_n) \ar[r] & \Sym([\bC^2/\bZ_{n+1}]),
      }$$
      where $\Hilb(\cA_n)$ and $\Hilb([\bC^2/\bZ_{n+1}])$ are mutually symplectic flops, related by wall-crossings.
\end{enumerate}

\subsection{Generation conjecture} \label{sec-gen-conj}

Consider the Fock spaces $\cF_{\cA_n}\cong \cF_{[\bC^2/\bZ_{n+1}]}$. Let $D_0, \cdots, D_n$ be a basis of the divisors, for example, the obvious ones we have taken in previous sections. 3-point functions of each theory described above define a product structure on $\cF$, and our previous results state that the operators $M_{D_i}$ of multiplication by divisors of each theory are equivalent.

There is a further step one can make to extend the results to general $r$-point functions. By the degeneration formula, it suffices to know all 3-point functions, with arbitrary insertions, instead of only divisors. In other words, we need to know the multiplication operator $M_\gamma$ for any class $\gamma\in \cF$. As in \cite{Mau-Ob2}, we make the following conjecture.

\begin{conjecture}
  The joint eigenspaces for the operators $M_{D_i}$, $0\leq i\leq n$ are 1-dimensional for all $m>0$.
\end{conjecture}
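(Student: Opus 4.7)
The plan is to approach the conjecture by transporting the question, via the correspondences established in the preceding sections, to a statement about an explicitly known commutative algebra of operators on Fock space, and then verifying simplicity of the joint spectrum by a generic specialization argument.

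First, I would use the DT/Hilb correspondence recalled at the beginning of Section 6 to identify the operators $M_{D_i}$ on $\cF_{[\bC^2/\bZ_{n+1}]}$ with the quantum multiplication operators by divisor classes on $\Hilb^m([\bC^2/\bZ_{n+1}])$. Combined with the isomorphism $\cF_{\cA_n} \cong \cF_{[\bC^2/\bZ_{n+1}]}$ and the DT crepant resolution in Theorem \ref{DT-CRC}, this reduces the problem to the analogous statement for the $n{+}1$ divisor operators on the quantum cohomology of $\Hilb^m(\cA_n)$, where the quantum multiplication has been written down explicitly by Maulik--Oblomkov as a sum of Heisenberg and ``edge'' operators with coefficients that are rational functions in the K\"ahler parameters $s_1,\ldots,s_n$ and $Q=-e^{iz}$.

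Next, since joint simultaneous diagonalizability is already automatic (the $M_{D_i}$ pairwise commute because quantum cohomology is commutative), the content of the conjecture is that the joint spectrum is \emph{simple}. My plan is to argue this by a generic argument: on each weight subspace $\cF_m$ (which is finite-dimensional), consider the joint spectrum of $(M_{D_0},\ldots,M_{D_n})$ as a subscheme of $\Spec\bC[s_1^{\pm1},\ldots,s_n^{\pm1},Q]^{n+1}$; simplicity is an open condition in parameters, so it suffices to exhibit a single point in parameter space where all joint eigenvalues are distinct. The natural test point is the $q$-deformed classical limit $Q\to 0$ with the $s_i$ generic, at which $M_{D_0}$ degenerates to a specific diagonal ``energy'' operator (up to nilpotent corrections) encoding the degree of a relative curve through each $\cA_n$-weighted partition, and the remaining $M_{D_i}$'s reduce to operators in the affine Hecke algebra action on Fock space whose joint spectrum on each $\cF_m$ is indexed by the set of $H^*(\cA_n)$-weighted partitions of $m$ --- a combinatorial bijection that matches the dimension of $\cF_m$.

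The main obstacle will be ruling out accidental coincidences among joint eigenvalues at the chosen specialization, since the parameters $s_i$ give only $n$ continuous degrees of freedom while the dimension of $\cF_m$ grows quickly in $m$. To handle this I would use the extra operator $M_{D_0}$, whose coefficient is the variable $z$ (or $Q$), to break degeneracies: since $M_{D_0}$ acts by a genuinely $m$-dependent energy coming from $\alpha_{-1}\alpha_1$-type Heisenberg quadratics, two partitions with distinct total descent statistics are automatically separated by $M_{D_0}$, and the remaining coincidences among partitions of the same ``energy'' are then separated by generic choices of $s_1,\ldots,s_n$ via the vertex-operator terms in the Maulik--Oblomkov formulas. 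Once simplicity is verified at this one point, upper semicontinuity of the dimensions of joint eigenspaces (equivalently, lower semicontinuity of the rank of the matrix whose rows are the tuples of eigenvalues) promotes it to generic simplicity, which is what the conjecture asserts.
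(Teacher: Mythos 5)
First, a point of orientation: this statement is left as a \emph{conjecture} in the paper (it is the analogue of the generation conjecture of Maulik--Oblomkov), so there is no proof in the paper to compare against; any correct argument here would be genuinely new. Your first step --- using the identifications of divisor 3-point functions (GW/Sym, DT/Hilb, and the crepant resolution statements) to transport the operators $M_{D_i}$ to the quantum multiplication by divisors on $\Hilb^m(\cA_n)$, where Maulik--Oblomkov give explicit formulas --- is legitimate, since those correspondences do hold with arbitrary $\mubar,\nubar$ as long as the third insertion is a divisor or the identity, and the changes of variables are invertible. But that reduction was never the difficulty: the conjecture is equally open on the $\cA_n$ side, and everything you say after the reduction is where the actual content lies.

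The genuine gap is that the simplicity of the joint spectrum at your chosen specialization is asserted, not proved, and the assertions are not innocuous. (i) Counting: knowing that the joint eigenvalues are ``indexed by'' $H^*(\cA_n)$-weighted partitions of $m$, with total count equal to $\dim\cF_m$, says nothing about distinctness of the eigenvalue tuples --- distinctness is exactly the conjecture at that point, so the argument is circular as written. (ii) The limit $Q\to 0$ (or $q\to 0$) is delicate: the purely classical multiplication by a divisor is nilpotent, so the limiting operators are only block-triangular, and ``diagonal up to nilpotent corrections'' does not by itself give $1$-dimensional joint eigenspaces unless you actually verify that the diagonal entries (the candidate eigenvalues) are pairwise distinct as $(n+1)$-tuples --- which is the same unproven claim. (iii) Your own worry about degrees of freedom is the real obstruction: the $M_{D_0}$-eigenvalue (the ``energy'') takes the same value on many distinct weighted partitions already for moderate $m$, and you have only $n$ parameters $s_1,\dots,s_n$ to separate the rest; the statement that the vertex-operator terms in the Maulik--Oblomkov formulas separate them generically is precisely what must be established, e.g.\ by an explicit computation of the limiting eigenvalue tuples (say via a Bethe-ansatz/affine Hecke or Calogero--Sutherland description) together with a proof that the resulting functions of $s_1,\dots,s_n$ are pairwise distinct. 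Without that computation the semicontinuity step has nothing to propagate, and the proposal does not close the conjecture; it repackages it.
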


Under this conjecture, the ring $\cF$ can be generated by divisors $D_i$, and our results extend.

\begin{corollary*}
  Under the above conjecture, all theories are equivalent in the sense that all $r$-point correlation functions are equal for arbitrary $n$.
\end{corollary*}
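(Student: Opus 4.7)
The plan is to leverage the generation conjecture as a purely algebraic statement about the divisor operators, so that agreement of $3$-point functions with one divisor insertion across theories bootstraps to agreement of all $3$-point functions, and then degeneration propagates the equivalence to all $r$-point functions. First I would note that in each theory the operators $M_{D_0}, \dots, M_{D_n}$ of quantum multiplication by divisors pairwise commute (they come from commutative Frobenius structures on $\cF$), and Theorem~\ref{Thm-CRC-GW} together with the GW/Sym, DT/Hilb, and GW/DT correspondences established in the previous sections identifies these operators across all four theories, up to the appropriate change of variables.

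Next, assuming the conjecture, the commuting family $\{M_{D_i}\}$ admits a simultaneous eigenbasis of $\cF$ whose joint eigenspaces are one-dimensional. This is exactly the condition that the tuple of functions $(\lambda_0, \dots, \lambda_n)$, where $\lambda_i$ is the eigenvalue of $M_{D_i}$, separates the eigenvectors. By a standard Lagrange-interpolation argument in the polynomial ring $\bQ[x_0, \dots, x_n]$, every scalar function on the finite eigenbasis can be written as a polynomial in the $\lambda_i$, so every diagonal operator on $\cF$ is a polynomial in $M_{D_0}, \dots, M_{D_n}$. In particular, for any $\gamma \in \cF$, the operator $M_\gamma$ equals some universal polynomial $P_\gamma(M_{D_0}, \dots, M_{D_n})$; since the $M_{D_i}$ agree across theories under the stated changes of variables, so does $M_\gamma$, and hence all $3$-point functions $\langle \mubar, \nubar, \rhobar \rangle$ with arbitrary $\rhobar$ agree.

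Finally I would invoke the (orbifold) degeneration formula of \cite{Abr-Fan}, and its DT analogue, to reduce an arbitrary $r$-point function to a weighted sum of products of $3$-point functions, matched summand-by-summand across theories because the gluing sums are indexed by the same combinatorial data $(\rhobar, \Gamma_1, \Gamma_2)$ and the individual $3$-point factors have already been identified. This gives the equivalence of all $r$-point correlators, proving the corollary.

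The main obstacle is bookkeeping rather than conceptual: one must verify that the change-of-variables dictionaries from the four edges of the cube (GW/DT for $\cA_n$ and for $[\bC^2/\bZ_{n+1}]$, the two crepant resolution transformations, and the GW/Sym and DT/Hilb correspondences) are mutually consistent along every face, so that the induced identifications of $M_\gamma$ are well-defined and not merely well-defined on divisors. This consistency is automatic on divisors by our earlier theorems, and then the polynomial interpolation above promotes it to all of $\cF$; the only potentially delicate point is tracking the $(-iz)^{l(\mu)+l(\nu)+l(\rho)-m}$ and $(-1)^m$ prefactors uniformly, but these depend only on the length statistics of the partitions and so commute with the algebraic step.
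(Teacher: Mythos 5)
Your proposal is correct and follows essentially the same route as the paper, which simply observes that under the conjecture the Fock space is generated by the divisor operators (so the already-matched divisor multiplications determine all $3$-point functions) and then invokes the degeneration formula to reduce arbitrary $r$-point functions to $3$-point ones. Your interpolation argument fleshes out the generation step in more detail than the paper does (one should additionally note that the polynomial expressing $M_\gamma$ is pinned down theory-independently by evaluating on the identity vector, which is matched under the Fock space identification), but the overall structure is the same.
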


\appendix

\section{Moduli stack of twisted curves with labelled nodes and markings}

In this appendix, in order to compare obstruction theories, we introduce the following auxillary stacks. Let $n$, $g$, $N$ and $M$ be positive integers.

\begin{definition}
We define a stack $\fM_{g,N,M}^\Delta$ as follows. For any scheme $S$, an object of the category $\fM_{g,N,M}^\Delta (S)$ consists of a family of prestable twisted curves $\cC$ over $S$, such that the connected components of its markings and nodes are labelled by one of the following four types:
\begin{itemize}

\item \emph{non-relative} and \emph{relative} markings, the number of which we denote by $N$ and $M$ respectively;

\item \emph{non-special} and \emph{special} nodes.

\end{itemize}
As usual, markings are numbered and nodes are not. An automorphism of an object is an automorphism of the twisted curve, which preserves the markings and the labels. Define $\fM^\Delta_{g,N,M} (B\bZ_{n+1})$ to be the stack of twisted curves, labelled as above, together with a (not necessarily representable) morphism to $B\bZ_{n+1}$.
\end{definition}

Let $\fM_{g, N+M}$ (resp. $\fM_{g, N+M}(B\bZ_{n+1})$) be the smooth Artin stack of $(N+M)$-pointed prestable twisted curves (resp. together with a map to $B\bZ_{n+1}$). The stack $\fM_{g, N, M}^\Delta$ (resp. $\fM^\Delta_{g,N,M} (B\bZ_{n+1})$) can be interpreted as a stack with labels, in the sense of \cite{Cos, ACFW} 
\footnote{In \cite{Cos}, the labels are put on irreducible components of domain curves to remember the degrees. The construction in Section 7.1 of \cite{ACFW} is more similar to our case, where labels are added at nodes and markings.} 
constructed from $\fM_{g, N+M}$ (resp. $\fM_{g,N+M} (B\bZ_{n+1})$). There is a forgetful map $\fM^\Delta_{g,N,M} \to \fM_{g, N+M}$ (resp. $\fM^\Delta_{g,N,M} (B\bZ_{n+1}) \to \fM_{g, N+M} (B\bZ_{n+1})$), which is \'etale. There is also an \'etale map $F: \fM_{g, N, M}^\Delta (B\bZ_{n+1}) \to \fM^\Delta_{g,N, M}$ forgetting the morphism to $B\bZ_{n+1}$. So these stacks are also smooth Artin stacks, with the same deformation theory.

Now let's define two ``coarsification" maps for $\fM_{g, N, M}^\Delta$. 

\begin{enumerate}[1)]
	
\setlength{\parskip}{1ex}

\item{\bf Coarsify non-special nodes and non-relative markings.} We define a map
$$
\pi :  \fM_{g, N, M}^\Delta \to \fM_{g,N,M}^\Delta
$$
as follows. Let $\cC \to S$ be an object of $\fM_{g,N,M}^\Delta (S)$, and $\bar C \to S$ be its coarse moduli space. Let $\cZ \subset \cC$ be the locus of all \emph{non-special nodes and non-relative markings}, and $\bar Z$ be its coarse moduli space. Take an \'etale neighborhood $\bar U$ of $\bar Z$, and let $\cU := \bar U \times_{\bar C} \cC$. By possibly shrinking $\bar U$, we may assume that $\cU \backslash \cZ \to \bar U \backslash \bar Z$ is an isomorphism. Let $C$ be the twisted curve obtained by gluing $\bar U$ with $\cC \backslash \cZ$. We define the image of $\cC$ under $\pi$ to be $C \to S$. 

\item {\bf Coarsify and forget relative markings.} We define a map
$$
\rho: \fM_{g, N, M}^\Delta \to \fM^\Delta_{g, N, 0} .
$$
Let $\cC \to S$ be an object of $\fM_{g,N,M}^\Delta (S)$. We apply the similar procedure to all the relative markings, and then forget those relative markings. 

\end{enumerate}

\begin{lemma} \label{Cart-1}
The diagram
$$
\xymatrix{
\fM_{g, N, M}^\Delta \ar[d]_-\pi \ar[r]^-\rho & \fM^\Delta_{g, N, 0} \ar[d]^-{\pi } \\
\fM_{g, N, M}^\Delta \ar[r]^\rho & \fM_{g,N, 0}^\Delta
}
$$
is commutative and Cartesian. Moreover, the map $\pi$ is quasi-finite and flat.
\end{lemma}

\begin{proof}
The Cartesian-ness of the diagram follows from the fact that one can recover the twisted curve $\cC \to S$ from its images under $\pi$ and the horizontal map together. It is clear that $\pi$ in the second column is quasi-finite.

For the flatness, let $\fM \subset \fM_{g,N,0}^\Delta$ be the open and closed substack where stabilizer groups along all non-special nodes and non-relative markings are trivial. We note that $\pi$ factors through and surjectively maps onto $\fM$. Choose a smooth covering $U \to \fM$, and form the Cartesian diagram
$$
\xymatrix{
	V \ar[r] \ar[d]_{\pi_U} &  \fM^\Delta_{g, N, 0}   \ar[d]^{\pi} \\
	U \ar[r]  & \fM .
}
$$
It follows that both $U$ is a smooth scheme, and $V$ is a smooth DM stack; $\pi_U$ is surjective, and admits 0-dimensional fibers. Passing to an \'etale covering $\tilde V \to V$, we obtain a map $\tilde \pi_U: \tilde V \to U$ between smooth schemes, which is surjective and admits 0-dimensional fibers. By Proposition 6.1.5 in \cite{EGA}, $\tilde \pi_U$, and hence $\pi_U$, is flat. The flatness of $\pi$ then follows from Lemma 06PZ in \cite{St}.
\end{proof}

Now let $X$ be a smooth projective scheme, and $D$ be a smooth divisor in $X$. Given a twisting choice $\fr$, let $\Mbar^\fr_{g, N, M, \Gamma} (X, D)$ be the moduli of \emph{transversal} relative stable maps into $\fr$-twisted expanded pairs of $(X,D)$, where $N$, $M$ are the numbers of relative and non-relative markings, and $\Gamma$ denotes other topological data, such as degree. For convenience, we will omit $\Gamma$ in the notation.

We also consider the pair $(B\bZ_{n+1} \times X, B\bZ_{n+1} \times D)$. There are forgetful maps
$$
\overline\cM_{g, N, M}^\fr (B\bZ_{n+1} \times X, B\bZ_{n+1} \times D) \to \fM_{g, N,M}^{\Delta} (B\bZ_{n+1}), \qquad \overline\cM_{g,N,M}^\fr (X, D) \to \fM_{g, N,M }^{\Delta} ,
$$
defined by taking the domains (and also the morphism to $B\bZ_{n+1}$ for the first map), where markings and nodes are labelled as follows:
\begin{itemize}

\setlength{\parskip}{1ex}	

\item relative and non-relative markings are still labelled as they are;

\item nodes mapping into the nodes of the $\fr$-twisted expanded pairs are labelled as \emph{special} nodes;

\item nodes mapping into the smooth locus of the $\fr$-twisted expanded pairs are labelled as \emph{non-special} nodes.
\end{itemize}

These two maps are well-defined, since the type of a node or marking is unchanged in a continuous family of relative stable maps. 

\begin{lemma} \label{Cart-2}
Let $\pi^\fr:\overline\cM_{g, N, M}^\fr (B \bZ_{n+1} \times X, B\bZ_{n+1} \times D) \to \overline\cM_{g,N, M}^\fr (X, D)$ be the map sending a $\fr$-twisted relative stable map to its relative coarse moduli space with respect to $\fr$-twisted expanded pairs of $(X, D)$. The following diagram is commutative and Cartesian:
$$
	\xymatrix{
		\overline\cM_{g, N, M}^\fr (B \bZ_{n+1} \times X, B\bZ_{n+1} \times D) \ar[r] \ar[d]_{\pi^\fr} & \fM_{g, N, M }^{\Delta} \ar[d]^{\pi \circ F} (B\bZ_{n+1}) \\
		\overline\cM_{g,N, M}^\fr (X, D) \ar[r]  & \fM_{g, N, M }^{\Delta} .
	}
$$
\end{lemma}

\begin{proof}
Let $\cX \to S$ be a family of $\fr$-twisted expanded pairs over $S$, and $\tilde f: \cC \to B\bZ_{n+1} \times \cX$ be a transversal relative stable map. Let $f: C \to \cX$ be its image under $\pi^\fr$. We look at the local behavior of $\tilde f$ and $f$ near all the markings and nodes.
\begin{itemize}
\setlength{\parskip}{1ex}
\item Near a special node, $\tilde f$ is of the form
$$
\left[ \Spec \frac{\cO_S [u,v]}{(uv-s)} \middle/ \bZ_a \right] \to B\bZ_{n+1} \times \left[ \Spec \frac{\cO_{S \times D} [x,y]}{(xy-t)} \middle/ \bZ_r \right] ,
$$
for $t\in \cO_D$, $s\in \cO_S$. By transversality, we must have $a|r$, and the map is $(x,y) \mapsto (u, v)$. In particular, the group homomorphism $\bZ_a \to \bZ_r$ is injective. Hence the coarsification $\cC \to C$ is an isomorphism near a special node. The case is similar for a relative marking.

\item Near a non-special node, $\tilde f$ is of the form
$$
\left[ \Spec \frac{\cO_S [u,v]}{(uv-s)} \middle/ \bZ_a \right] \to B\bZ_{n+1} \times U
$$
where $U$ is some smooth scheme. Then $\cC \to C$ is a is the twisting along a node (in the sense of Section 1.4 in \cite{Abr-Fan}) of order $a$ near a non-special node. The case is similar for a non-relative marking.
\end{itemize}
Summarizing all the cases, we see that under the map $\pi^\fr$, the twisted curve $\cC$ is coarsified to $C$ in exactly the same way as we define $\pi \circ F$, i.e. all non-special nodes and non-relative markings are coarsified. Hence the diagram is commutative. The Cartesian-ness is easy to check.
\end{proof}

We now compare the obstruction theories. Let $\fT$ be the Artin stack parametrizing all $\fr$-twisted expanded pairs of $(X, D)$, $\fX$ is the universal family over $\fT$, and $\fC$ is the universal curve over $\Mbar^\fr_{g, N,M} (X, D)$. Let $\fD \subset \fX$ be the universal relative divisor, and $\Sigma \subset \fC$ be the divisor of \emph{non-relative markings}\footnote{In \cite{Abr-Fan} this is denoted by $\Sigma'$, and the $\Sigma$ there is $\Sigma + f^{-1} \fD$ in our notation.}. Consider the universal diagram (and also for $(B\bZ_{n+1} \times X, B\bZ_{n+1} \times D)$)
$$
\xymatrix{
	\fC \ar[r]^-f \ar[d]_-p & \fX \ar[d] & \widetilde\fC \ar[r]^-{\tilde f} \ar[d]_-{\tilde p} & B\bZ_{n+1} \times \fX \ar[d] \\
	\Mbar^\fr_{g, N, M} (X, D) \ar[r] & \fT & \overline\cM_{g, N, M}^\fr (B \bZ_{n+1} \times X, B\bZ_{n+1} \times D) \ar[r] & \fT  .
}
$$
In \cite{Abr-Fan}, the relative perfect obstruction theory on $\Mbar_{g, N,M}^\fr (X,D)$ over $\fT$ is given by the complex
$$
\bE^\bullet := Rp_* \left( [f^* L_{\fX / \fT} \to L_{\fC / \Mbar^\fr_{g, N, M} (X, D)} (\Sigma) ] \otimes \omega_{\fC / \Mbar^\fr_{g, N, M} (X, D)} \right)
$$
where $\omega$ denotes the dualizing line bundle, and $[f^* L_{\fX / \fT} \to L_{\fC / \Mbar^\fr_{g, N, M} (X, D)} (\Sigma) ]$ has degrees concentrated in $[-1,0]$. Similarly, there is an obstruction theory for $(B\bZ_{n+1} \times X, B\bZ_{n+1} \times D)$, which we denote by $\widetilde\bE^\bullet$.

\begin{remark}
	Only $\Sigma$ but not the full divisor of markings $\Sigma + f^{-1} \fD$ is used in the obstruction. The reason is that the relative divisor $f^{-1} \fD$ is completely determined by the relative stable map $f$ and the divisor $\fD$ in the target. Hence relative markings are not free to deform in the moduli of domain curves.
\end{remark}

\begin{lemma} \label{pullback-app}
	$$
	\left[ \Mbar^\fr_{g, N, M}(B\bZ_{n+1}\times X, B\bZ_{n+1} \times D) \right]^\vir = (\pi^\fr)^* \left[ \Mbar^\fr_{g,N, M} (X, D) \right]^\vir.
	$$
\end{lemma}

\begin{proof}
We have the diagram
$$
\xymatrix{
	\overline\cM_{g, N, M}^\fr (B \bZ_{n+1} \times X, B\bZ_{n+1} \times D) \ar[r] \ar[d]_{\pi_1^\fr} &  \fM_{g, N, M}^\Delta (B\bZ_{n+1}) \ar[d]_-F  & \\
	\Nbar \ar[r]^-{G_1} \ar[d]_{\pi^\fr_2} & \fM_{g, N, M}^\Delta \ar[r]^-\rho \ar[d]_\pi & \fM_{g, N, 0}^\Delta \ar[d]^-\pi \\
	\overline\cM_{g,N, M}^\fr (X, D) \ar[r]^-{G_2}  &  \fM_{g, N, M}^\Delta \ar[r]^-\rho & \fM_{g,N, 0}^\Delta ,
}
$$
where $F$ is the map forgetting the morphism to $B\bZ_{n+1}$, and $\Nbar$ is defined to be the fiber product $\overline\cM_{g,N, M}^\fr (X, D) \times_{\fM_{g, N, M}^\Delta, \pi} \fM_{g, N, M}^\Delta$, which parameterizes diagrams of the form $C' \xrightarrow{u} C \xrightarrow{f} X[k](\fr)$, where $u: C' \to C$ is a coarsification of twisted curves at all non-special nodes and non-relative markings, and $f$ is a relative stable map. Lemma \ref{Cart-1} and \ref{Cart-2} implies that all three squares in the diagram are Cartesian. All vertical maps are flat, and $F$ is \'etale. 

Consider the following alternative perfect obstruction theories on $\overline\cM_{g,N, M}^\fr (X, D)$:
$$
\bE^\bullet_\rel := Rp_* \left( f^* L_{\fX / \fT} \otimes \omega_{\fC / \Mbar_{g,N, M}^\fr (X, D)} \right) [1].
$$
The relation between $\bE^\bullet_\rel$ and  $\bE^\bullet$ is 
$$
\xymatrix{
\bE^\bullet \ar[r] & \bE^\bullet_\rel \ar[r] & Rp_* \left( L_{\fC / \Mbar_{g, N, M}^\fr (X, D)} (\Sigma) \otimes \omega_{\fC / \Mbar^\fr_{g, N, M} (X, D)} \right) [1] \ar[r]^-{[1]} & 
}
$$
where the third object is actually $G_2^* \rho^* L_{\fM_{g,N, 0}^\Delta} [1]$, which is responsible for the deformation theory of the domain curves. Therefore, $\bE^\bullet_\rel$ is a relative perfect obstruction theory over the base $\fM_{g,N, 0}^\Delta \times \fT$, and defines the same virtual classes defined as $\bE^\bullet$ (for a proof, see Proposition A.1.(1) in \cite{Bry-Leu}). 

The universal diagram for $\Nbar$ is the left of the following
$$
\xymatrix{
\fC' \ar[dr]^-u \ar@/_1pc/[ddr]_{p'} \ar@/^1pc/[drr]^{f'} & & \\
& \Nbar \times_{\Mbar^\fr_{g,N,M} (X,D)} \fC \ar[d]^-{\pr_1} \ar[r]_-{f \circ \pr_2} & \fX \ar[d]  &&  \Nbar \times_{\Mbar^\fr_{g,N,M} (X,D)} \fC \ar[r]^-{\pr_2} \ar[d]_-{\pr_1} & \fC \ar[d]^-p \\
& \Nbar \ar[r] & \fT, && \Nbar \ar[r]^-{\pi_2^\fr} & \Mbar^\fr_{g,N,M} (X,D) . 
}
$$
Proposition 7.2 of \cite{Beh-Fan} implies that $(\pi_2^\fr)^* \bE_\rel^\bullet$ is a relative perfect obstruction theory on $\Nbar$, over $\fM_{g,N, 0}^\Delta \times \fT^\fr$, and it defines the virtual class 
\begin{equation} \label{N-1}
[\Nbar]^\vir = (\pi_2^\fr)^* [\overline\cM_{g,N, M}^\fr (X, D)]^\vir . 
\end{equation}
The following computation shows that $(\pi_2^\fr)^* \bE_\rel^\bullet$ agrees with the relative obstruction theory on $\Nbar$:
\begin{eqnarray*}
R (p')_* \left( (f')^* L_{\fX / \fT} \otimes \omega_{\fC' / \Nbar} \right) [1] &=& R(\pr_1)_* \circ u_* \left( u^* \pr_2^* f^*  L_{\fX / \fT} \otimes \omega_{\fC' / \Nbar} \right) [1] \\
&=& R(\pr_1)_*  \left(  \pr_2^* f^*  L_{\fX / \fT} \otimes u_* \omega_{\fC' / \Nbar} \right) [1] \\
&=& R(\pr_1)_*  \pr_2^*  \left( f^*  L_{\fX / \fT} \otimes \omega_{\fC / \Mbar_{g,N, M}^\fr (X, D)} \right) [1] \\
&=& (\pi_2^\fr)^* Rp_* \left( f^* L_{\fX / \fT} \otimes \omega_{\fC / \Mbar_{g,N, M}^\fr (X, D)} \right) [1]
\end{eqnarray*}
where we used $u_* \omega_{\fC' / \Nbar} = \omega_{\pr_1} = \pr_2^* \omega_{\fC / \Mbar_{g,N, M}^\fr (X, D)}$, since $u$ is the coarsification map at all non-special nodes and non-relative markings. In other words, $[\Nbar]^\vir$ agrees with the virtual class defined by the natural relative obstruction theory on $\Nbar$. 

We then apply the previous trick conversely for $\Nbar$, and replace it by 
$$
\bF^\bullet := R (p')_* \left( [ (f')^* L_{\fX / \Nbar} \to L_{\fC' / \Nbar} (\Sigma') ] \otimes \omega_{\fC' / \Nbar} \right) , 
$$ 
which fits in the distinguished triangle $\bF^\bullet  \to (\pi_2^\fr)^* \bE^\bullet_\rel \to G_1^* \rho^* L_{\fM_{g,N, 0}^\Delta} [1] \xrightarrow{[1]}$. It is a relative perfect obstruction theory on $\Nbar$ over $\fT^r$, which defines the same virtual class $[\Nbar]^\vir$. 
	
Now one can check directly that $(\pi_1^\fr)^* \bF^\bullet \cong \widetilde\bE^\bullet$, since the universal curve and universal target for $\overline\cM_{g, N, M}^\fr (B \bZ_{n+1} \times X, B\bZ_{n+1} \times D)$ are simply the pullback along $\pi_1^\fr$ of those for $\Nbar$. One can then consider the triple
$$
\xymatrix{
\overline\cM_{g, N, M}^\fr (B \bZ_{n+1} \times X, B\bZ_{n+1} \times D) \ar[dr] \ar[rr]^-{\pi_1^\fr} && \Nbar \ar[dl] \\
& \fT , 
}
$$
and obtain 
\begin{equation} \label{N-2}
[\overline\cM_{g, N, M}^\fr (B \bZ_{n+1} \times X, B\bZ_{n+1} \times D)]^\vir = (\pi_1^\fr)^! [\Nbar]^\vir = (\pi_1^\fr)^* [\Nbar]^\vir,
\end{equation}
where $(\pi_1^\fr)^!$ is the virtual pullback in the sense of \cite{Man}, which coincides with $(\pi_1^\fr)^*$ in this case. The lemma then follows from (\ref{N-1}) and (\ref{N-2}). 
\end{proof}

\bibliographystyle{plain}
\bibliography{reference}

\end{document}